\newtheorem{theorem}{Th\'eor\`eme}[subsection]
\newtheorem{lemma}{Lemme}[subsection]
\newtheorem{proposition}{Proposition}[subsection]
\newtheorem{cor}{Corollaire}
\newtheorem{defi}{D\'efinition}
\begin{document}

\date{}
\title{\large{LE SYMBOLE DE HASSE LOGARITHMIQUE}}

\author{{\bf Stéphanie Reglade}}

\maketitle

{\footnotesize \noindent\textbf{Résumé}: \textit{Le but de cet article est de présenter la construction du symbole de Hasse logarithmique, analogue du symbole classique dans le contexte logarithmique et d'en étudier les principales propriétés.
Cette étude permet d'obtenir une expression du défaut du principe de Hasse $\ell$-adique.
Nous présentons ensuite des cas particuliers et
obtenons une version logarithmique du théorème de l'idéal principal.}}
\smallskip

{\footnotesize \noindent\textbf{Abstract}: \textit{In this article, we define the logarithmic Hasse symbol in the same way as
the usual one but in the context of the logarithmic ramification. We study its fondamental properties. The interesting point
is that we get an expression of the defect of the $\ell$-adic Hasse principle. Then we study particular cases and get a logarithmic version of the principal ideal theorem.      }}

\medskip

\noindent \textbf{Mots-clefs}: \textbf{théorie du corps des classes, théorie $\ell$-adique du corps des classes, théorie logarithmique.}
\medskip

\noindent \textbf{AMS Classification: 11R37}
\medskip

\tableofcontents

\bigskip

\section{Introduction et Notations}

\subsection{Position du problème}

\noindent Les objets étudiés sont les $\mathbb{Z}_{\ell}$-modules fondamentaux de la théorie $\ell$-adique 
du corps des classes construits par Jaulent dans \cite{Ja1} où $\ell$ désigne un nombre premier fixé. Le contexte logarithmique
correspond au cas où l'objet local étudié : le $\ell$-adifié du groupe multiplicatif d'un corps de nombres
est muni de la valuation dite logarithmique, construite à partir du logarithme d'Iwasawa \cite{Ja2}.
\smallskip

\noindent  Le théorème $\ell$-adique de la norme de Hasse \cite{Re1} précise
que dans le cas d'une $\ell$-extension cyclique un idèle principal est une norme globale si et seulement si
c'est une norme locale partout  i.e une norme pour chaque complétion $ L_{\mathfrak{P}}/K_{\mathfrak{p}}$ où $ \mathfrak{P} \mid \mathfrak{p}$. 

\noindent Nous sommes donc spontanément amenés à considérer, dans le cas d'une $\ell$-extension
abélienne finie, le quotient des idèles principaux normes locales partout par ceux  qui sont normes globales : ce quotient définit le "groupe de défaut" du principe de Hasse $\ell$-adique, objet analogue au groupe de défaut du principe de Hasse classique.
\smallskip

\noindent Le but de cet article est d'étudier ce groupe de défaut du principe de Hasse $\ell$-adique et d'en donner
 une interprétation arithmétique  le reliant au groupe des classes logarithmique de degré nul. C'est le sens du théorème 3.0.4 :

\medskip

\noindent \textbf{Théorème}

\noindent \textit{Soit $L/K$ une $\ell$-extension abélienne finie}, $\tilde{\mathcal{C}}\ell_{L} $ (\textit{respectivement} $\tilde{\mathcal{C}}\ell_{K} $  ) \textit{le groupe des classes logarithmiques de degré nul de} $L$ (\textit{respectivement} $K$),  $\tilde{\mathcal{C}}\ell_{L}^{*}$  \textit{le noyau de l'application norme} $N_{L/K} : \tilde{\mathcal{C}}\ell_{L} 
\longrightarrow N_{L/K} \tilde{\mathcal{C}}\ell_{L}  $  \textit{et} $\Delta_{L/K}$ \textit{ l'idéal
d'augmentation du groupe de Galois de} $L/K$. \textit{Nous avons alors :}

$$ | \hat{\Gamma}_{L/K}|  (\mathcal{N}_{L/K} :  N_{L/K}\mathcal{R}_{L} ) =(\tilde{\mathcal{C}\ell}_{L}^{*}: \tilde{\mathcal{C}\ell}_{L}^{\Delta_{L/K}}) 
(\tilde{\mathcal{E}}_{K}: \tilde{\mathcal{E}}_{K} \cap  N_{L/K}\mathcal{R}_{L}).$$

\medskip

\noindent L'outil utilisé pour démontrer ce théorème est le symbole de Hasse logarithmique défini dans le paragraphe suivant. 
Nous explicitons ensuite ces principales propriétés et établissons une formule analogue à la formule
du produit pour ce symbole. 
\medskip

\noindent Nous étudions par la suite un exemple d'application et des cas particuliers de ce théorème.
Pour finir nous en présentons un corollaire : une version logarithmique du théorème de l'idéal principal.

\bigskip\bigskip

\subsection{Glossaire des notations}

\noindent{Dans tout ce qui suit $\ell$ désigne un nombre premier fixé. Introduisons les notations suivantes}
\medskip

\noindent{\textit{Pour un corps local $K_{\mathfrak{p}}$ d'idéal maximal $\mathfrak{p}$ et d'uniformisante $\pi_{\mathfrak{p}}$, nous notons}
\bigskip

$\mathcal{R}_{K_{\mathfrak{p}} }=\varprojlim_{k}  K_{\mathfrak{p}}^{\times} \diagup {  K_{\mathfrak{p}}^{\times  \ell^{k}}}$:  
le $\ell$-adifié du groupe multiplicatif du corps local
\medskip

$\mathcal{U}_{K_{\mathfrak{p}} }=\varprojlim_{k} {U}_{\mathfrak{p}}  \diagup  U_{\mathfrak{p}}^{\ell^k}$: le $\ell$-adifié  du groupe des unités  $U_{\mathfrak{p}} $  de $K_{\mathfrak{p}}$
\medskip

\medskip

\noindent{\textit{Pour un corps de nombre $K$, nous définissons }}
\bigskip

$\mathcal{R}_{K}=\mathbb{Z}_{\ell} \otimes_{\mathbb{Z}} K^{\times}$ : le
$\ell$-groupe des idèles principaux
\medskip

 $\mathcal{J}_{K} =\prod_{ \mathfrak{p} \in Pl_{K} } ^{res} \mathcal{R}_{K_{\mathfrak{p}}}$ : le $\ell$-groupe des idèles 
\medskip

$\mathcal{U}_{K}=\prod_{ \mathfrak{p} \in Pl_{K} } \mathcal{U}_{K_{\mathfrak{p}}}$ 
: le sous-groupe des unités
\medskip

$\mathcal{C}_{K}= \mathcal{J}_{K} /  \mathcal{R}_{K}$ : le $\ell$-groupe des classes d'idèles
\bigskip

\noindent{\textit{Dans le contexte logarithmique, nous posons }}
\bigskip

$\hat{\mathbb{Q}_{\mathfrak{p}}^{c}}$ : la $\widehat{\mathbb{Z}}$-extension cyclotomique de  $\mathbb{Q}_{p}$
\medskip

$\mathbb{Q}_{p}^{c}$ : la $\mathbb{Z}_{\ell}$-extension cyclotomique de $\mathbb{Q}_{p}$
\medskip

$\tilde v_{\mathfrak{p}}$ : la valuation logarithmique  associée à $\mathfrak{p}$ sur $\mathcal{R}_{K_{\mathfrak{p}}}$
\medskip

$\widetilde{\mathcal{U}}_{ K_{\mathfrak{p}}}=\textrm{Ker}(\tilde{v_{\mathfrak{p}}})$ : le sous groupe des unités logarithmiques locales
\medskip

$\widetilde{\mathcal{U}}_{K}=\prod_{ \mathfrak{p} \in Pl_{K} } \widetilde{\mathcal{U}}_{K_{\mathfrak{p}}}$ 
: le sous-groupe des unités logarithmiques
\medskip

$\tilde e_{\mathfrak{p}} = [ K_{\mathfrak{p}} : \hat{\mathbb{Q}_{\mathfrak{p}}^{c}} \cap K_{\mathfrak{p}} ]$ :
l'indice absolu de ramification logarithmique de $\mathfrak{p}$
\medskip

$ \tilde f_{\mathfrak{p}}=[ \hat{\mathbb{Q}_{\mathfrak{p}}^{c}} \cap K_{\mathfrak{p}} : \mathbb{Q}_{\mathfrak{p}}]$ :
le degré absolu d'inertie logarithmique de $\mathfrak{p}$
\medskip

$ \tilde e_{ L_{\mathfrak{P}}/K_{\mathfrak{p}}} = [ L_{\mathfrak{P}} :
\hat{K_{\mathfrak{p}}^{c}} \cap L_{\mathfrak{P}} ]$ : l'indice relatif de ramification logarithmique de $\mathfrak{p}$
\medskip

$ \tilde f_{ L_{\mathfrak{P}}/K_{\mathfrak{p}}}=[ \widehat{K_{\mathfrak{p}}^{c}} \cap L_{\mathfrak{P}} : K_{\mathfrak{p}}]$ :
le degré relatif d'inertie logarithmique de $\mathfrak{p}$
\medskip

 $\mathcal{J}_{K}^{( \mathfrak{m})}=\prod_{\mathfrak{p} \not \vert  \mathfrak{m} } \mathcal{R}_{K_{\mathfrak{p}}} \prod_{\mathfrak{p} \vert  \mathfrak{m} } \widetilde{\mathcal{U}}_{K_{\mathfrak{p}}}^{v_{\mathfrak{p}}( \mathfrak{m})}$
\medskip

$\mathcal{J}_{K}^{ \mathfrak{m}}=\prod_{\mathfrak{p} \not \vert  \mathfrak{m} } \mathcal{R}_{K_{\mathfrak{p}}} \prod_{\mathfrak{p} \vert  \mathfrak{m} }
\widetilde{\mathcal{U}}_{K_{\mathfrak{p}}} $
\medskip

$\widetilde{\mathcal{U}}_{K}^{( \mathfrak{m})}=\prod_{\mathfrak{p} \not \vert  \mathfrak{m} } \mathcal{U}_{K_{\mathfrak{p}}} \prod_{\mathfrak{p} \vert  \mathfrak{m} } \widetilde{\mathcal{U}}_{K_{\mathfrak{p}}}^{v_{\mathfrak{p}}( \mathfrak{m})} $
\medskip

$\mathcal{R}_{K}^{( \mathfrak{m})}=\mathcal{R}_{K} \cap \mathcal{J}_{K}^{( \mathfrak{m})}$
\medskip

$D\ell_{K}=\bigoplus_{\mathfrak{p} \; \textrm{place finie de $K$}}  \mathbb{Z}_{\ell} \mathfrak{p}$ : le groupe des diviseurs logarithmiques de $K$
\medskip

\begin{align*}
\psi : \mathcal{J}_{K}^{ \mathfrak{m}} &\longrightarrow  D\ell_{K}^{ \mathfrak{m}} \\
           \alpha=(\alpha_{\mathfrak{p}}) &\longmapsto \psi(\alpha)=\prod_{\textrm{place finie de $K$}}
\mathfrak{p}^{\widetilde{v_{\mathfrak{p}}}(\alpha_{\mathfrak{p}})}
\end{align*}
\medskip
           
$ D\ell_{K}^{ \mathfrak{m}}=\psi(\mathcal{J}_{K}^{ \mathfrak{m}})$ : diviseurs logarithmiques premiers à $m$ 
\medskip

$ P\ell_{K}^{( \mathfrak{m})}=\psi(\mathcal{R}_{K}^{( \mathfrak{m})})$ : diviseurs logarithmiques principaux attachés à $ \mathfrak{m}$
\medskip

$\tilde{D\ell}_{L/K}$ : groupe des diviseurs logarithmiques de degré nul
\medskip

$\tilde{\mathcal{C}\ell}_{L/K}$ : groupe des classes logarithmiques de degré nul

$\widetilde{\mathfrak{f}}_{L/K}$ : le conducteur logarithmique global de $L/K$
\medskip

$\widetilde{\mathfrak{f}}_{\mathfrak{p}} $ : le conducteur logarithmique local en $\mathfrak{p}$

$(\widetilde{\frac{L/K}{\mathfrak{p}} })$ : le Frobenius logarithmique en $\mathfrak{p}$
\medskip

$A\ell_{L/K}$ le groupe d'Artin logarithmique de $L/K$
\medskip

$ T\ell_{K,m} = P\ell_{K}^{( \mathfrak{m})} \cdot N_{L/K}( D\ell_{L}^{ \mathfrak{m}} ))$ : le sous-module de Takagi logarithmique associé à $ \mathfrak{m}$
\medskip

$\widetilde{ ( \frac{\alpha,L/K} { \mathfrak{p}})}$ : le symbole de Hasse logarithmique
\medskip

$\widetilde{\Gamma}_{L/K, \mathfrak{p}}$ : le sous-groupe d'inertie logarithmique en $\mathfrak{p}$
\medskip

$\widetilde{\Gamma}_{L/K}= \prod_{\mathfrak{p} \vert \tilde{\mathfrak{f}}_{L/K} } \widetilde{\Gamma}_{L/K, \mathfrak{p}} $
\medskip

$ \mathcal{A}_{L/K} =\{ \alpha \in \mathcal{R}_{K} \; / \; \psi(\alpha) \in N_{L/K}(D\ell_{L}^{\tilde{\mathfrak{f}}_{L/K}}) \}.$
\medskip

$\Lambda_{ \mathfrak{m}}$ : idèles principaux dont le diviseur logarithmique associé est premier à $m$
\medskip

$ \mathcal{N}_{L/K}= \{ \alpha \in \mathcal{R}_{K} \; \textrm{ qui sont partout norme locale} \}. $
\medskip

$ \mathcal{N}_{L/K, \mathfrak{m}}= \mathcal{N}_{L/K} \cap \Lambda_{ \mathfrak{m}}. $ pour tout module $ \mathfrak{m}$ de $K$
\medskip

\section{Le symbole de Hasse logarithmique}
\medskip

\noindent Nous introduisons, dans cette partie, le symbole de Hasse logarithmique analogue du symbole de Hasse
classique dans le  contexte logarithmique. Nous nous intéressons à l'image et au noyau de cet homomorphisme.
Nous rappelons au préalable les définitions de conducteur logarithmique et de l'application d'Artin logarithmique.

\medskip

\subsection{Définitions et premières propriétés}

\noindent L'objet local de la théorie $\ell$-adique du corps des classes \cite{Ja1} est le $\ell$-adifié du groupe multiplicatif d'un corps local : $\mathcal{R}_{K_{\mathfrak{p}} }=\varprojlim_{k}  K_{\mathfrak{p}}^{\times} \diagup {  K_{\mathfrak{p}}^{\times  \ell^{k}}}.$  Il est ici muni de la valuation logarithmique définie comme suit \cite{Ja2} :

\begin{defi}{\cite{Ja2}}
Soit $K$ une extension finie de $\mathbb{Q}$, $p$ un nombre premier. Notons $\widehat{\mathbb{Q}_{p}^{c}}$ la $\widehat{\mathbb{Z}}$-extension  cyclotomique de $\mathbb{Q}_{p}$. Soit $\mathfrak{p}$ une place de $K$ au dessus de $p$,

\noindent i) le degré $\ell$-adique de $p$ est donné par la formule :
$ deg_ {\ell}(p)
= \left\{
    \begin{array}{lll}
      Log_{Iw}(p) & \mbox{ si } p \ne  \ell \\
      Log_{Iw}(1+\ell) & \mbox{ si }  p=\ell 
    \end{array}
\right.$
\smallskip

\noindent ii) le degré $\ell$-adique de $\mathfrak{p}$ est défini par la formule :
$deg(\mathfrak{p})=\tilde{f_{\mathfrak{p}}}\cdot deg_{\ell}(p)$
\smallskip

\noindent iii) la valuation logarithmique associée à $\mathfrak{p}$ est : 
$ \tilde v_{\mathfrak{p}}(x)= -Log_{Iw}(N_{K_{\mathfrak{p}}/\mathbb{Q}_{p}}(x))/deg_{\ell}(\mathfrak{p}) $
, définie sur $\mathcal{R}_{K_{\mathfrak{p}}}$ et à valeurs dans $\mathbb{Z}_{\ell}$,~\cite{Ja2}[proposition 1.2]

\end{defi}
\smallskip

\noindent C'est le choix du dénominateur dans  l'expression de la valuation logarithmique qui impose le choix de l'uniformisante logarithmique $\tilde{\ell}$ sur $\mathbb{Q}_{\ell}$. Ainsi le choix de $\textrm{deg}_{\ell}(\ell)=\textrm{Log}_{Iw}(1+\ell)$ impose $\tilde{\ell}=1+\ell$, \cite{Re2}.
Pour $K_\mathfrak{p}$ une extension finie de $\mathbb{Q}_{p}$, nous avons :

\begin{defi}{Uniformisante logarithmique  de $\mathcal{R}_{K_{\mathfrak{p} }} $  \cite{Re2} }
\medskip

\noindent \textbf{Si} $\mathfrak{p} \not\vert \ell$, l'uniformisante classique $\pi_{\mathfrak{p}}$ est aussi 
uniformisante logarithmique.
\smallskip

\noindent \textbf{Si} $\mathfrak{p} \vert \ell$, une uniformisante logarithmique est un élément $\tilde{\pi}_{\mathfrak{p}}$
de $\mathcal{R}_{K_{\mathfrak{p}}}$ vérifiant :
$$ Log_{Iw}( N_{K_{\mathfrak{p}}/\mathbb{Q}_{\ell}}( \tilde \pi_{\mathfrak{p}})   )=\tilde{f}_{\mathfrak{p}} \; \textrm{deg}_{\ell}(\ell)=Log_{Iw}(\tilde{\ell}^{\tilde{f}_{\mathfrak{p}} })$$

\noindent où $\tilde{\ell}$ désigne l'uniformisante logarithmique de $\mathcal{R}_{\mathbb{Q}_{\ell}}$.

\end{defi}

\noindent Le noyau de cette valuation définit le groupe des unités logarithmiques $\widetilde{ \mathcal{U}}_{K_{\mathfrak{p}}}$ de
$\mathcal{R}_{K_{\mathfrak{p}}}$. Nous considérons une filtration décroissante du groupe des unités logarithmiques : $(\widetilde{ \mathcal{U}}_{K_{\mathfrak{p}}}^{n})_{n \in \mathbb{N}}$
à partir de laquelle nous posons les définitions suivantes :

\begin{defi}{Conducteurs logarithmiques \cite{Re2}}

\noindent{i) Si $L_{\mathfrak{P}}/K_{\mathfrak{p}}$ est une $\ell$-extension abélienne finie ,et si $n$ est le plus petit entier tel que }
$\widetilde{ \mathcal{U}}_{K_{\mathfrak{p}}}^{n}  \subseteq N_{L_{\mathfrak{P}}/K_{\mathfrak{p}}}( \mathcal{R}_{L_{\mathfrak{P}}})$ alors l'idéal :
$$ \tilde {\mathfrak{f}}_{\mathfrak{p}} = \mathfrak{p}^{n}$$
est appelé conducteur logarithmique local associé à l'extension.
\smallskip

\noindent{ii) Si $L/K$ est une $\ell$-extension abélienne finie, le conducteur logarithmique global est défini par  : }
$$\tilde{\mathfrak{f}}_{L/K}= \prod_{\mathfrak{p}}  \tilde{\mathfrak{f}}_{\mathfrak{p}}  $$

\end{defi}

\begin{defi}{L'application d'Artin logarithmique \cite{Re2}}

\noindent Soit $L/K$ une $\ell$-extension abélienne finie. Soit $\mathfrak{p}$ une place de $K$ logarithmiquement non ramifiée dans $L$.
\noindent Soit $D\ell_{K}$ le groupe des diviseurs logarithmiques de $K$. Soit $\tilde{\mathfrak{f}}_{L/K}$ le conducteur logarithmique global de de $L/K$ , et $D\ell_{K}^{\tilde{\mathfrak{f}}_{L/K}}$ le sous-groupe des diviseurs logarithmiques  premiers au conducteur $\tilde{\mathfrak{f}}_{L/K}$.

\noindent{Nous définissons le Frobenius logarithmique attaché à une place $\mathfrak{p}$ logarithmiquement non ramifiée : }

$$ (\widetilde{\frac{L/K}{\mathfrak{p}}})=([\tilde \pi_{\mathfrak{p}}], L/K)$$
avec $\tilde \pi_{\mathfrak{p}}$ l'uniformisante logarithmique définie préalablement et
$[\tilde \pi_{\mathfrak{p}}]$ l'image de $\tilde{\pi_{\mathfrak{p}}}$ dans $\mathcal{J}_K.$

\noindent{Nous obtenons ainsipar extension  l'application d'Artin logarithmique attachée à une place $\mathfrak{p}$ logarithmiquement non ramifiée: }

\begin{center}
$\begin{array}{ccccc}
\widetilde{(\frac{L/K}{\;})}   &:  D\ell_{K}^{\tilde{\mathfrak{f}}_{L/K}} &  & \to & \mathrm{Gal}(L/K) \\
& & \mathfrak{p} & \mapsto & (\widetilde{\frac{L/K}{\mathfrak{p}}})\\
\end{array}$
\end{center}

\end{defi}
\medskip

\begin{defi}{Le symbole de Hasse logarithmique}

\noindent Soient $L$ une $\ell$-extension abélienne finie de $K$, $\alpha$ un idèle principal $\alpha \in \mathcal{R}_{K}$ et $\mathfrak{p}$  une place de $K$.
Notons  $\tilde{\mathfrak{f}}_{\mathfrak{p}}$ le conducteur logarithmique local associé à $\mathfrak{p}$ et $\tilde{\mathfrak{f}}_{L/K}$ le conducteur logarithmique global de l'extension $L/K$.
Considérons  $\beta \in \mathcal{R}_{K}$ vérifiant 
$$ \frac{\beta}{\alpha} \in \mathcal{R}_{K}^{(\tilde{\mathfrak{f}}_{\mathfrak{p}})} \; \textrm{et} \; \beta \in \mathcal{R}_{K}^{(\frac{\tilde{\mathfrak{f}}_{L/K}} {\tilde{\mathfrak{f}}_{\mathfrak{p}}})} .$$
\noindent Nous dirons  que $\beta$ est un $\mathfrak{p}$-associé logarithmique de $\alpha$.  

\noindent Nous écrivons alors  $\psi(\beta)= \mathfrak{p}^{\mathfrak{a}} a$ avec $a$ premier à $\mathfrak{p}$ et $a \in D\ell_{K}^{\tilde{\mathfrak{f}}_{L/K}}$.

\noindent Nous définissons le symbole $( \widetilde{\frac{\alpha,L/K} { \mathfrak{p}})}$  de la façon suivante :
$$\widetilde{ ( \frac{\alpha,L/K} { \mathfrak{p}})}= (\widetilde{ \frac{L/K}{a}}). $$
\noindent Ce symbole, défini sur $\mathcal{R}_{K}$, est appelé le symbole de Hasse logarithmique pour $\mathfrak{p}$ dans $L/K$ \index{symbole de Hasse logarithmique}.
\noindent Dans le cas particulier où $\widetilde{\mathfrak{f}_{\mathfrak{p}}}=1$, nous remplaçons la condition $ \frac{\beta}{\alpha} \in \mathcal{R}_{K}^{(1)}$ par $ \frac{\beta}{\alpha} $ est premier à $\mathfrak{p}$.
\end{defi}
\medskip

\noindent \textbf{Remarques :}

\noindent i) L'existence d'un tel $\beta$ est assurée par le lemme suivant:
\begin{lemma}{Théorème d'approximation $\ell$-adique}{~\cite[II.2]{Gr}}

\noindent Le morphisme de semi-localisation
$$ \mathcal{R}_{K} \longrightarrow \prod_{\mathfrak{p} \in S} \mathcal{R}_{K_{\mathfrak{p}}}$$
\noindent est surjectif pour tout ensemble fini de places $S$. En effet, l'image de $\mathcal{R}_{K}$ est un sous-$\mathbb{Z}_{\ell}$-module compact et dense.

\end{lemma}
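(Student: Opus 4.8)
The plan is to exploit that the target $T:=\prod_{\mathfrak{p}\in S}\mathcal{R}_{K_{\mathfrak{p}}}$ is a \emph{compact} topological $\mathbb{Z}_{\ell}$-module, so that surjectivity of the semi-localisation map $\phi\colon\mathcal{R}_{K}\to T$ will follow from the two assertions already flagged in the statement: that $\phi(\mathcal{R}_{K})$ is dense and compact. Indeed $T$ is Hausdorff, so a compact subset is closed, and a subset that is simultaneously closed and dense is all of $T$. Thus it suffices to establish density and compactness of the image separately.

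First I would record the structure of the local factors. For a finite place $\mathfrak{p}$ the decomposition $K_{\mathfrak{p}}^{\times}\cong\pi_{\mathfrak{p}}^{\mathbb{Z}}\times U_{\mathfrak{p}}$, together with the structure of $U_{\mathfrak{p}}$ as the product of a finite cyclic group by a free $\mathbb{Z}_{p}$-module, shows that $\mathcal{R}_{K_{\mathfrak{p}}}=\varprojlim_{k}K_{\mathfrak{p}}^{\times}/K_{\mathfrak{p}}^{\times\ell^{k}}$ is a finitely generated $\mathbb{Z}_{\ell}$-module: of $\mathbb{Z}_{\ell}$-rank $1$ up to finite torsion when $\mathfrak{p}\nmid\ell$, and of rank $[K_{\mathfrak{p}}:\mathbb{Q}_{\ell}]+1$ up to torsion when $\mathfrak{p}\mid\ell$. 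Being a finite product of such modules, $T$ is itself a finitely generated $\mathbb{Z}_{\ell}$-module, and its $\ell$-adic topology coincides with the projective-limit (profinite) topology because each $K_{\mathfrak{p}}^{\times\ell^{k}}$ is open of finite index in $K_{\mathfrak{p}}^{\times}$; in particular $T$ is compact.

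For density I would invoke weak approximation. The map $\phi$ is $\mathbb{Z}_{\ell}$-linear, being the canonical extension to $\mathbb{Z}_{\ell}\otimes_{\mathbb{Z}}K^{\times}$ of the diagonal $K^{\times}\to\prod_{\mathfrak{p}\in S}K_{\mathfrak{p}}^{\times}\to T$. Writing $T=\varprojlim_{k}\prod_{\mathfrak{p}\in S}K_{\mathfrak{p}}^{\times}/K_{\mathfrak{p}}^{\times\ell^{k}}$, density of the image reduces to surjectivity of $K^{\times}$ onto each finite level $\prod_{\mathfrak{p}\in S}K_{\mathfrak{p}}^{\times}/K_{\mathfrak{p}}^{\times\ell^{k}}$. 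Since this quotient is finite and discrete, and since $K^{\times}$ is dense in $\prod_{\mathfrak{p}\in S}K_{\mathfrak{p}}^{\times}$ by the classical weak approximation theorem (using that each $K_{\mathfrak{p}}^{\times}$ is open in $K_{\mathfrak{p}}$), the image of $K^{\times}$ is dense in, hence equal to, this discrete group. Therefore $\phi(\mathcal{R}_{K})$ is dense in $T$.

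The key and most delicate step is compactness of the image, which I would prove algebraically rather than topologically. As $\mathbb{Z}_{\ell}$ is Noetherian and $T$ is finitely generated, $T$ is a Noetherian $\mathbb{Z}_{\ell}$-module, so the submodule $\phi(\mathcal{R}_{K})$ is itself finitely generated, say by $m_{1},\dots,m_{n}$. The combination map $\mathbb{Z}_{\ell}^{n}\to T$, $(a_{i})\mapsto\sum_{i}a_{i}m_{i}$, is continuous because $T$ is a topological $\mathbb{Z}_{\ell}$-module, and its image is exactly $\phi(\mathcal{R}_{K})$; since $\mathbb{Z}_{\ell}^{n}$ is compact, $\phi(\mathcal{R}_{K})$ is compact in the subspace topology. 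Being compact it is closed in $T$, and being also dense it equals $T$, which is the asserted surjectivity. The main obstacle is precisely this compactness: one must be careful that $T$ genuinely carries the compact profinite topology and is finitely generated over $\mathbb{Z}_{\ell}$, for it is this Noetherian finiteness---and not any completeness of $\mathcal{R}_{K}$ itself, which is far from compact---that forces the image to be closed.
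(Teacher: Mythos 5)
Your proof is correct and takes essentially the same approach as the paper: the paper (citing Gras) justifies the lemma in a single line by remarking that the image of $\mathcal{R}_{K}$ is a compact and dense sub-$\mathbb{Z}_{\ell}$-module of the Hausdorff target, hence closed and dense, hence everything. What you add is just the missing detail --- weak approximation for density, and Noetherian finiteness of the finitely generated $\mathbb{Z}_{\ell}$-module $\prod_{\mathfrak{p}\in S}\mathcal{R}_{K_{\mathfrak{p}}}$ to get compactness (closedness) of the image --- which fleshes out precisely the argument the paper invokes.
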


\noindent ii) Vérifions que le résultat ne dépend pas du choix du $\mathfrak{p}$-associé de $\alpha$. 

\noindent En effet, si $\beta^{'}$ est un autre $\mathfrak{p}$-associé de $\alpha$, alors nous avons $\frac{\beta}{\alpha} \in \mathcal{R}_{K}^{(\widetilde{\mathfrak{f}_{\mathfrak{p}}})} \; \textrm{et} \; \beta \in \mathcal{R}_{K}^{(\frac{\tilde{\mathfrak{f}}_{L/K}} {\tilde{\mathfrak{f}}_{\mathfrak{p}}})} $ et
$\frac{\beta^{'}}{\alpha} \in \mathcal{R}_{K}^{(\widetilde{\mathfrak{f}_{\mathfrak{p}}})} \; \textrm{et} \; \beta^{'} \in \mathcal{R}_{K}^{(\frac{\tilde{\mathfrak{f}}_{L/K}} {\tilde{\mathfrak{f}}_{\mathfrak{p}}})} $, à savoir 
$\frac{\beta^{'}}{\beta} \in \mathcal{R}_{K}^{(\widetilde{\mathfrak{f}_{\mathfrak{p}}})} \; \textrm{et} \; \frac{\beta^{'}} {\beta} \in \mathcal{R}_{K}^{(\frac{\widetilde{\mathfrak{f}_{L/K}}} {\tilde{\mathfrak{f}}_{\mathfrak{p}}})} $. Finalement,
nous avons donc $\frac{\beta^{'}}{\beta} \in \mathcal{R}_{K}^{ (\tilde{\mathfrak{f}}_{L/K})}$. \'Ecrivons $\psi( \beta^{'})= \mathfrak{p}^{\mathfrak{a'}} a'$, nous en déduisons alors que $\mathfrak{a}=\mathfrak{a'}$. Finalement, $\psi( \frac{a'}{a}) \in 
P\ell_{K}^{(\tilde{\mathfrak{f}}_{L/K})}$. Il suit que  $(\widetilde{ \frac{L/K}{a}})= (\widetilde{ \frac{L/K}{a'}})$ car
$ P\ell_{K}^{(\tilde{\mathfrak{f}}_{L/K})} \in A\ell_{L/K}$, noyau de l'application d'Artin logarithmique.
\bigskip

\begin{proposition}
Le symbole logarithmique de Hasse pour $\mathfrak{p}$ a les propriétés suivantes :

\noindent i) c'est un homomorphisme de $\mathcal{R}_{K}$ dans $\textrm{Gal}(L/K)$

\noindent ii) si $\mathfrak{p} \not \vert \widetilde{\mathfrak{f}_{L/K}}$, alors $ \widetilde{ ( \frac{\alpha,L/K} { \mathfrak{p}}) }= (\widetilde{ \frac{L/K}{\mathfrak{p}}})^{-\mathfrak{a}}$ où $\psi(\alpha)= \mathfrak{p}^{\mathfrak{a}} a $ avec $a$ premier à $\mathfrak{
p}$

\noindent iii) la restriction de $\widetilde{ ( \frac{\; .,L/K} { \mathfrak{p}}) }$ à $M \subset L$ est $\widetilde { ( \frac{\; .,M/K} { \mathfrak{p}}) }$.
\end{proposition}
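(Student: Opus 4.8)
La stratégie consiste à vérifier les trois propriétés successivement, en s'appuyant sur la définition du symbole via l'application d'Artin logarithmique et sur l'indépendance du choix du $\mathfrak{p}$-associé déjà établie dans la remarque ii).

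Pour le point i), je me donne $\alpha_{1}, \alpha_{2} \in \mathcal{R}_{K}$ et je choisis des $\mathfrak{p}$-associés logarithmiques $\beta_{1}, \beta_{2}$ respectivement, avec $\psi(\beta_{i})= \mathfrak{p}^{\mathfrak{a}_{i}} a_{i}$. L'idée est de montrer que $\beta_{1}\beta_{2}$ est un $\mathfrak{p}$-associé de $\alpha_{1}\alpha_{2}$ : les conditions de congruence définissant $\mathcal{R}_{K}^{(\tilde{\mathfrak{f}}_{\mathfrak{p}})}$ et $\mathcal{R}_{K}^{(\tilde{\mathfrak{f}}_{L/K}/\tilde{\mathfrak{f}}_{\mathfrak{p}})}$ sont stables par produit puisque ces ensembles sont des sous-groupes multiplicatifs de $\mathcal{R}_{K}$. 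On a alors $\psi(\beta_{1}\beta_{2})=\mathfrak{p}^{\mathfrak{a}_{1}+\mathfrak{a}_{2}}(a_{1}a_{2})$ avec $a_{1}a_{2}$ premier à $\mathfrak{p}$, d'où, par multiplicativité de l'application d'Artin logarithmique sur $D\ell_{K}^{\tilde{\mathfrak{f}}_{L/K}}$,
$$\widetilde{\left(\frac{\alpha_{1}\alpha_{2},L/K}{\mathfrak{p}}\right)}=\widetilde{\left(\frac{L/K}{a_{1}a_{2}}\right)}=\widetilde{\left(\frac{L/K}{a_{1}}\right)}\widetilde{\left(\frac{L/K}{a_{2}}\right)}=\widetilde{\left(\frac{\alpha_{1},L/K}{\mathfrak{p}}\right)}\widetilde{\left(\frac{\alpha_{2},L/K}{\mathfrak{p}}\right)}.$$
L'indépendance du choix des associés garantit que ceci est bien défini.

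Pour le point ii), si $\mathfrak{p} \nmid \tilde{\mathfrak{f}}_{L/K}$ alors $\tilde{\mathfrak{f}}_{\mathfrak{p}}=1$, et la condition sur le $\mathfrak{p}$-associé devient simplement : $\beta/\alpha$ est premier à $\mathfrak{p}$ et $\beta \in \mathcal{R}_{K}^{(\tilde{\mathfrak{f}}_{L/K})}$. Le point clé est qu'on peut alors prendre $\alpha$ lui-même comme point de départ : en écrivant $\psi(\alpha)=\mathfrak{p}^{\mathfrak{a}} a$ avec $a$ premier à $\mathfrak{p}$, on a $\psi(\beta)=\mathfrak{p}^{\mathfrak{a}} a'$ avec $a' a^{-1}$ premier à $\mathfrak{p}$ et $a/a' \in P\ell_{K}^{(\tilde{\mathfrak{f}}_{L/K})}$. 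Puisque $\mathfrak{p}$ est logarithmiquement non ramifiée, $\mathfrak{p} \in D\ell_{K}^{\tilde{\mathfrak{f}}_{L/K}}$ et l'application d'Artin y est définie ; on calcule donc $\widetilde{(\frac{L/K}{a'})}=\widetilde{(\frac{L/K}{a})}$ par la même congruence que dans la remarque ii). Il reste à relier $a$ et $\mathfrak{p}^{\mathfrak{a}}$ : comme $\psi(\alpha)=\mathfrak{p}^{\mathfrak{a}}a$ et que $\alpha$ est principal, $\psi(\alpha) \in P\ell_{K}$, de sorte que son image par l'application d'Artin est triviale, d'où $\widetilde{(\frac{L/K}{a})}=\widetilde{(\frac{L/K}{\mathfrak{p}})}^{-\mathfrak{a}}$, ce qui donne la formule annoncée.

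Pour le point iii), la fonctorialité résulte de la compatibilité analogue pour l'application d'Artin logarithmique : la restriction à $M$ du Frobenius logarithmique $\widetilde{(\frac{L/K}{\mathfrak{q}})}$ coïncide avec $\widetilde{(\frac{M/K}{\mathfrak{q}})}$ pour toute place logarithmiquement non ramifiée (propriété de fonctorialité des applications d'Artin, via la surjection $\mathrm{Gal}(L/K) \twoheadrightarrow \mathrm{Gal}(M/K)$). Il faut seulement s'assurer que le conducteur logarithmique de $M/K$ divise celui de $L/K$, de sorte qu'un $\mathfrak{p}$-associé de $\alpha$ relatif à $L/K$ en est aussi un relatif à $M/K$, et que $D\ell_{K}^{\tilde{\mathfrak{f}}_{L/K}} \subseteq D\ell_{K}^{\tilde{\mathfrak{f}}_{M/K}}$. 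J'attends que le principal obstacle technique se situe précisément ici, dans la vérification de cette divisibilité des conducteurs et dans le contrôle des supports : une fois ce point acquis, la restriction se transporte terme à terme sur l'écriture $\psi(\beta)=\mathfrak{p}^{\mathfrak{a}}a$ et la conclusion est immédiate.
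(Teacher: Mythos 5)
Your points i) and iii) are correct and follow essentially the same route as the paper: for i), stability of the two conditions définissant le $\mathfrak{p}$-associé under products together with multiplicativity of the logarithmic Artin map; for iii), the divisibilities $\tilde{\mathfrak{f}}_{M/K} \mid \tilde{\mathfrak{f}}_{L/K}$, $\tilde{\mathfrak{f}}_{\mathfrak{p},M/K} \mid \tilde{\mathfrak{f}}_{\mathfrak{p},L/K}$ et $\frac{\tilde{\mathfrak{f}}_{M/K}}{\tilde{\mathfrak{f}}_{\mathfrak{p},M/K}} \mid \frac{\tilde{\mathfrak{f}}_{L/K}}{\tilde{\mathfrak{f}}_{\mathfrak{p},L/K}}$, which make a $\mathfrak{p}$-associé for $L/K$ also a $\mathfrak{p}$-associé for $M/K$, followed by compatibility of the Artin symbol with restriction to $M$.

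Your point ii), however, contains a genuine gap, in two places. First, you manipulate $a$, the prime-to-$\mathfrak{p}$ part of $\psi(\alpha)$. But $\alpha \in \mathcal{R}_{K}$ is arbitrary: $\psi(\alpha)$ may involve places dividing $\tilde{\mathfrak{f}}_{L/K}$, so in general $a \notin D\ell_{K}^{\tilde{\mathfrak{f}}_{L/K}}$ and the symbol $(\widetilde{\frac{L/K}{a}})$ is not even defined; moreover your claim $a/a' \in P\ell_{K}^{(\tilde{\mathfrak{f}}_{L/K})}$ does not follow, because when $\tilde{\mathfrak{f}}_{\mathfrak{p}}=1$ the associé condition only says that $\beta/\alpha$ is prime to $\mathfrak{p}$, not that $\beta/\alpha \in \mathcal{R}_{K}^{(\tilde{\mathfrak{f}}_{L/K})}$. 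Second, and more seriously, the step \emph{``$\alpha$ est principal, donc $\psi(\alpha)\in P\ell_{K}$ et son image par l'application d'Artin est triviale''} is false: the kernel of the logarithmic Artin map contains $P\ell_{K}^{(\tilde{\mathfrak{f}}_{L/K})} = \psi(\mathcal{R}_{K}^{(\tilde{\mathfrak{f}}_{L/K})})$, i.e.\ the divisors of principal idèles that are logarithmic units of the prescribed level at every place dividing the conductor --- not the divisors of arbitrary principal idèles. (If principality alone sufficed, the Artin map would factor through the full logarithmic class group and conductors would play no role.) The paper's proof avoids both pitfalls by never invoking $\psi(\alpha)$: since $\beta \in \mathcal{R}_{K}^{(\tilde{\mathfrak{f}}_{L/K})}$, one has $\psi(\beta)=\mathfrak{p}^{\mathfrak{a}}a' \in P\ell_{K}^{(\tilde{\mathfrak{f}}_{L/K})} \subseteq A\ell_{L/K}$, hence $\widetilde{(\frac{\alpha,L/K}{\mathfrak{p}})} = (\widetilde{\frac{L/K}{a'}}) = (\widetilde{\frac{L/K}{\psi(\beta)\mathfrak{p}^{-\mathfrak{a}}}}) = (\widetilde{\frac{L/K}{\mathfrak{p}}})^{-\mathfrak{a}}$; and this exponent $\mathfrak{a}$ equals the $\mathfrak{p}$-valuation of $\psi(\alpha)$ precisely because $\beta/\alpha$ is prime to $\mathfrak{p}$. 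Your argument for ii) should be replaced by this one.
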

\smallskip

\begin{proof}
\noindent i) $\beta$ et $\beta^{'}$ étant des $\mathfrak{p}$-associés de $\alpha$ et $\alpha^{'}$, alors $\beta \beta^{'}$ est un $\mathfrak{p}$-associé de $\alpha \alpha^{'}$. De l'écriture $\psi(\beta)=\mathfrak{p}^{\mathfrak{a}} a $ et $\psi(\beta^{'})=\mathfrak{p}^{\mathfrak{a'}} a' $, nous en déduisons que $\psi(\beta \beta^{'})= \mathfrak{p}^{\mathfrak{a}}
\mathfrak{p}^{\mathfrak{a'}} a  a'$. Il suit alors que  $\widetilde{ ( \frac{\alpha \alpha^{'},L/K} { \mathfrak{p}}) }=(\widetilde{ \frac{L/K}{a a'}})=(\widetilde{ \frac{L/K}{a}}) (\widetilde{ \frac{L/K}{a'}})= \widetilde{ ( \frac{\alpha,L/K} { \mathfrak{p} }) } \widetilde{  ( \frac{\alpha^{'},L/K} { \mathfrak{p} }) }$ car l'application d'Artin logarithmique est un homomorphisme.

\noindent ii) Comme par hypothèse $\mathfrak{p} \not \vert  \tilde{\mathfrak{f}}_{L/K}$, nous avons que
 $\tilde{\mathfrak{f}}_{\mathfrak{p}}=1$. Par définition $\beta$ un $\mathfrak{p}$-associé de $\alpha$ vérifie : $\frac{\beta}{\alpha}$ est premier à $\mathfrak{p}$ et $\beta \in  \mathcal{R}_{K}^{ (\tilde{\mathfrak{f}}_{L/K})}$. Ainsi 
$\psi(\beta)=\mathfrak{p}^{\mathfrak{a}} a  \in P\ell_{K}^{(\tilde{\mathfrak{f}}_{L/K})}$. Alors  $\widetilde{ ( \frac{\alpha,L/K} { \mathfrak{p}}) }=(\widetilde{ \frac{L/K}{a}})=\widetilde{ (\frac{L/K}{\psi(\beta) \mathfrak{p}^{- \mathfrak{a}}) }}=
(\widetilde{ \frac{L/K}{\psi(\beta)}}) (\widetilde{ \frac{L/K}{\mathfrak{p}^{-\mathfrak{a}} }) }=(\widetilde{ \frac{L/K}{\mathfrak{p}^{-\mathfrak{a}}}}) $ car l'autre terme est dans le noyau du morphisme d'Artin logarithmique.

\noindent iii) Soit $\beta$ un $\mathfrak{p}$-associé de $\alpha$ dans $L/K$, alors $\beta$ est encore
un $\mathfrak{p}$-associé de $\alpha$ dans $M/K$ pour $M \subset L$. En effet $\tilde{\mathfrak{f}}_{M/K} \vert
\tilde{\mathfrak{f}}_{L/K}$, de même $\tilde{\mathfrak{f}}_{ \mathfrak{p}, M/K }  \vert \tilde{\mathfrak{f}}_{\mathfrak{p}, L/K} $, et $\frac{\tilde{\mathfrak{f}}_{M/K}} { \tilde{\mathfrak{f}}_{ \mathfrak{p}, M/K } }  \vert \frac{\tilde{\mathfrak{f}}_{L/K} }  {\tilde{\mathfrak{f}}_{ \mathfrak{p}, L/K } }$.
Nous avons alors, si $\psi(\beta)=\mathfrak{p}^{\mathfrak{a}} a $,  $\widetilde{ ( \frac{\alpha,M/K} { \mathfrak{p}}) } =(\widetilde{ \frac{M/K}{a}})$. Or $(\widetilde{ \frac{M/K}{a}})$ est la restriction de $(\widetilde{ \frac{L/K}{a}})=\widetilde{( \frac{\alpha,L/K} { \mathfrak{p}}) }$ à $M$.
\end{proof}
\medskip

\begin{theorem}
L'image de $\mathcal{R}_{K}$ par le symbole de Hasse logarithmique pour $\mathfrak{p}$ dans $L/K$ est égale au sous-groupe de décomposition pour $\mathfrak{p}$ dans $L/K$.
\end{theorem}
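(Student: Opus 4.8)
L'idée directrice est de montrer que le symbole de Hasse logarithmique pour $\mathfrak{p}$ coïncide, à l'inverse près, avec le symbole local de reste normique $(\,\cdot\,, L_{\mathfrak{P}}/K_{\mathfrak{p}})$ issu de la théorie locale logarithmique du corps des classes \cite{Ja1}. Comme l'image de ce dernier est le groupe de Galois local $\mathrm{Gal}(L_{\mathfrak{P}}/K_{\mathfrak{p}})$, canoniquement identifié au sous-groupe de décomposition de $\mathfrak{p}$ dans $\mathrm{Gal}(L/K)$, les deux inclusions du théorème s'en déduiront: l'image sera contenue dans le sous-groupe de décomposition puisque chaque valeur du symbole local y vit, et le sous-groupe de décomposition sera contenu dans l'image grâce à la surjectivité locale combinée au théorème d'approximation $\ell$-adique.

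Pour établir cette coïncidence, je pars d'un $\mathfrak{p}$-associé $\beta$ de $\alpha$ et j'invoque la formule du produit pour l'application d'Artin logarithmique: pour tout idèle principal $\beta \in \mathcal{R}_{K}$, le produit $\prod_{\mathfrak{q}} (\beta_{\mathfrak{q}}, L_{\mathfrak{Q}}/K_{\mathfrak{q}})$ des symboles locaux est trivial. Les conditions de congruence définissant le $\mathfrak{p}$-associé annulent presque tous les facteurs: la condition $\beta \in \mathcal{R}_{K}^{(\widetilde{\mathfrak{f}}_{L/K}/\widetilde{\mathfrak{f}}_{\mathfrak{p}})}$ assure qu'en chaque place ramifiée $\mathfrak{q} \mid \widetilde{\mathfrak{f}}_{L/K}$ distincte de $\mathfrak{p}$, la composante $\beta_{\mathfrak{q}}$ est une norme locale, de symbole trivial; aux places logarithmiquement non ramifiées, le symbole local est une puissance du Frobenius logarithmique dictée par la valuation $\widetilde{v}_{\mathfrak{q}}$, et leur produit n'est autre que $(\widetilde{\frac{L/K}{a}})$, où $\psi(\beta)=\mathfrak{p}^{\mathfrak{a}} a$. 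La formule du produit livre alors $(\widetilde{\frac{L/K}{a}}) = (\beta_{\mathfrak{p}}, L_{\mathfrak{P}}/K_{\mathfrak{p}})^{-1}$, tandis que la condition $\beta/\alpha \in \mathcal{R}_{K}^{(\widetilde{\mathfrak{f}}_{\mathfrak{p}})}$ garantit $(\beta_{\mathfrak{p}}, L_{\mathfrak{P}}/K_{\mathfrak{p}}) = (\alpha_{\mathfrak{p}}, L_{\mathfrak{P}}/K_{\mathfrak{p}})$. On obtient ainsi $\widetilde{(\frac{\alpha,L/K}{\mathfrak{p}})} = (\alpha_{\mathfrak{p}}, L_{\mathfrak{P}}/K_{\mathfrak{p}})^{-1}$.

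La première inclusion est alors immédiate: $(\alpha_{\mathfrak{p}}, L_{\mathfrak{P}}/K_{\mathfrak{p}})$ appartient par construction à $\mathrm{Gal}(L_{\mathfrak{P}}/K_{\mathfrak{p}})$, donc le symbole prend ses valeurs dans le sous-groupe de décomposition. Réciproquement, la théorie locale logarithmique du corps des classes fournit la surjectivité du symbole de reste normique local $\mathcal{R}_{K_{\mathfrak{p}}} \twoheadrightarrow \mathrm{Gal}(L_{\mathfrak{P}}/K_{\mathfrak{p}})$; par le théorème d'approximation $\ell$-adique (le lemme ci-dessus), tout élément de $\mathcal{R}_{K_{\mathfrak{p}}}$ est la composante en $\mathfrak{p}$ d'un idèle principal $\alpha \in \mathcal{R}_{K}$, de sorte que tout élément du sous-groupe de décomposition est atteint par $\widetilde{(\frac{\alpha,L/K}{\mathfrak{p}})}$. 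D'où l'égalité annoncée.

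Le point délicat est le contrôle précis des facteurs locaux dans la formule du produit. Il faut notamment vérifier que la condition $\beta \in \mathcal{R}_{K}^{(\widetilde{\mathfrak{f}}_{L/K}/\widetilde{\mathfrak{f}}_{\mathfrak{p}})}$ force bien la trivialité des symboles aux places ramifiées autres que $\mathfrak{p}$, ce qui revient à l'inclusion $\widetilde{\mathcal{U}}_{K_{\mathfrak{q}}}^{\,v_{\mathfrak{q}}(\widetilde{\mathfrak{f}}_{L/K})} \subseteq N_{L_{\mathfrak{Q}}/K_{\mathfrak{q}}}(\mathcal{R}_{L_{\mathfrak{Q}}})$, conséquence de la définition même du conducteur logarithmique local. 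Il faut également disposer de la formule du produit logarithmique elle-même, c'est-à-dire de la trivialité de l'application d'Artin logarithmique globale sur les idèles principaux, résultat qui constitue le véritable socle de la démonstration.
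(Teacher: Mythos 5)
Your proof is correct, but it takes a genuinely different route from the paper's. You reduce the theorem to the identification $\widetilde{(\frac{\alpha,L/K}{\mathfrak{p}})}=([\alpha_{\mathfrak{p}}],L/K)^{-1}$ with the local norm-residue symbol, obtained by splitting the idelic reciprocity law of \cite{Ja1} along the congruence conditions defining a $\mathfrak{p}$-associate; both inclusions then follow from local $\ell$-adic class field theory (the image of the local symbol is the full local Galois group, i.e.\ the decomposition group) together with the approximation lemma. The paper never states this identification: it argues entirely inside the divisor-theoretic, Takagi-style formalism. For the direct inclusion it restricts the symbol to the decomposition field $M$ and applies Proposition 2.1.1 ii) and iii); for surjectivity it lifts $\sigma\in\mathrm{Gal}(L/M)$ to a divisor $b$ via the surjectivity of the global Artin map, corrects $b$ by a power of $\mathfrak{p}$ (the logarithmic Frobenius generates $\mathrm{Gal}(I/M)$, where $I$ is the logarithmic inertia field), and concludes with Lemma 2.1.2, namely $A\ell_{L/K}\,P\ell_K^{(\tilde{\mathfrak{f}}_{L/K}/\tilde{\mathfrak{f}}_{\mathfrak{p}})}=A\ell_{I/K,(\tilde{\mathfrak{f}}_{L/K}/\tilde{\mathfrak{f}}_{\mathfrak{p}})}$, itself proved by the class-field correspondence for congruence submodules, so as to produce an $\alpha$ that is its own $\mathfrak{p}$-associate with symbol $\sigma$. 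Your route buys economy: the local identification yields at once the kernel characterization (Proposition 2.1.2) and the product formula (théorème 2.2.1). Its cost is heavier imported input: the idelic reciprocity law and local-global compatibility of \cite{Ja1}, plus the fact that at logarithmically unramified places the logarithmic units are norms --- a fact you use implicitly when asserting that the unramified local symbol is the power of the logarithmic Frobenius prescribed by $\tilde v_{\mathfrak{q}}$, and which should be cited explicitly (it is the characterization of logarithmic non-ramification in \cite{Ja2}). The paper's argument needs only the surjectivity of the Artin map and the congruence-submodule correspondence of \cite{Re2}, \cite{Re3}, so it remains self-contained in its chosen framework.
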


\medskip

\begin{proof}
Nous rappelons ici que pour une place $\mathfrak{p}$ de $K$, être complètement décomposée au sens classique ou au sens logarithmique c'est la même chose. Désignons alors par $M$ le corps de décomposition de $\mathfrak{p}$, alors $\mathfrak{p}$ est complètement décomposée dans $M/K$ :  par la proposition précédente ii) et iii), nous en déduisons $\widetilde{ ( \frac{\alpha,L/K} { \mathfrak{p}}) }$ restreint à $M$ est $\widetilde{ ( \frac{\alpha,M/K} { \mathfrak{p}}) }$ et $\widetilde{ ( \frac{\alpha,M/K} { \mathfrak{p}}) } = (\widetilde{ \frac{M/K}{\mathfrak{p}^{-\mathfrak{a}}}})=
(\widetilde{ \frac{M/K}{\mathfrak{p}} })^{-\mathfrak{a}} $.  Or $(\widetilde{ \frac{M/K}{\mathfrak{p}} })^{-\mathfrak{a}} $ est le Frobenius logarithmique en $M$ : de part la définition de $M$, nous avons donc $(\widetilde{ \frac{M/K}{\mathfrak{p}} })^{-\mathfrak{a}}=1 $ et $\widetilde{ ( \frac{\alpha,L/K} { \mathfrak{p}}) } \in \textrm{Gal}(L/M)$ : donc l'image du symbole de Hasse logarithmique est contenue dans le sous groupe de décomposition.
\medskip

\noindent Réciproquement, il nous faut montrer que le symbole de Hasse logarithmique est surjectif sur le sous-groupe de  décomposition de $\mathfrak{p}$. Désignons alors par $I$ le corps d'inertie logarithmique de $\mathfrak{p}$, à savoir qui
correspond à l'extension maximale logarithmiquement non ramifiée :

\[
\xymatrix{
    L \ar@{-}[d]^{\widetilde{e_{\mathfrak{p}}} }  \\
    I \ar@{-}[d]^{ \widetilde{f_{\mathfrak{p}}}}  \\
    M \ar@{-}[d]   \\
    K
  }
\]

\noindent  Considérons $\sigma \in \textrm{Gal}(L/M) \subset \textrm{Gal}(L/K)$, alors par la surjectivité de l'application d'Artin logarithmique, nous pouvons trouver $ b \in D\ell_{K}^{\tilde{\mathfrak{f}}_{L/K}}$ tel que $(\widetilde{ \frac{L/K}{b} } )   =\sigma$. Le but est alors de montrer que 
  $(\widetilde{ \frac{L/K}{b} })$ se met sous la forme $\widetilde { ( \frac{\alpha,L/K} { \mathfrak{p}}) }$. Or $\sigma$ restreint à $M$ c'est l'identité, donc  $(\widetilde{ \frac{M/K}{b} } )=1$, et ainsi $(\widetilde{ \frac{I/K}{b} } ) \in \textrm{Gal}(I/M)$. Or nous savons que
$(\widetilde{ \frac{I/K}{\mathfrak{p}} } )$ est le Frobenius logarithmique en $\mathfrak{p}$  dans $I/K$, donc il est d'ordre $\tilde f_{\mathfrak{p}}$ et engendre le groupe de Galois $\textrm{Gal}(I/M)$. $(\widetilde{ \frac{I/K}{b} } )$ s'exprime donc comme une puissance du Frobenius logarithmique. Par suite il existe donc une puissance $\mathfrak{p}^{a}$ de $\mathfrak{p}$ telle que $\mathfrak{p}^{a} b \in A\ell_{I/K, {(\frac{\tilde{\mathfrak{f}}_{L/K}} {\tilde{\mathfrak{f}}_{\mathfrak{p}}}) }              }$. L'égalité du lemme suivant $A\ell_{K} P\ell_{K}^{{(\frac{\widetilde{\mathfrak{f}_{L/K}}} {\widetilde{\mathfrak{f}_{\mathfrak{p}}}}) }}= A\ell_{I/K, {(\frac{\tilde{\mathfrak{f}}_{L/K}} {\tilde{\mathfrak{f}}_{\mathfrak{p}}}) }              }$ permet alors d'écrire :
$\mathfrak{p}^{a} b=\psi(\alpha) c$ avec $\alpha \in \mathcal{R}_{K}^{(\frac{\tilde{\mathfrak{f}}_{L/K}} {\tilde{\mathfrak{f}}_{\mathfrak{p}}}) }$ et $c \in A\ell_{K}$. Nous considérons alors  $( \frac{\alpha,L/K} { \mathfrak{p}})$ :
$\alpha$ est un $\mathfrak{p}$-associé de lui même, et nous avons $ \psi(\alpha)=\mathfrak{p}^{a} b c^{-1}$.
Ainsi  $\widetilde{ ( \frac{\alpha,L/K} { \mathfrak{p}}) }=(\widetilde{ \frac{L/K}{b c^{-1}} })=(\widetilde{ \frac{L/K}{b} })$.
 \end{proof}
\bigskip

\begin{lemma}
Si $I$ est le corps d'inertie logarithmique dans $L/K$, pour la place $\mathfrak{p}$ de $K$, alors nous avons l'égalité suivante :
$$A\ell_{L/K} P\ell_{K}^{{(\frac{\tilde{\mathfrak{f}}_{L/K}} {\tilde{\mathfrak{f}}_{\mathfrak{p}}}) }}= A\ell_{I/K, {(\frac{\tilde{\mathfrak{f}}_{L/K}} {\tilde{\mathfrak{f}}_{\mathfrak{p}}}) }              }$$

\end{lemma}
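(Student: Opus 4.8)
Write $\mathfrak m=\tilde{\mathfrak f}_{L/K}/\tilde{\mathfrak f}_{\mathfrak p}$ for the prime-to-$\mathfrak p$ part of the global conductor, which is the common modulus on both sides. Since $I$ is logarithmically unramified at $\mathfrak p$ one has $\tilde{\mathfrak f}_{I/K}\mid\mathfrak m$, and $D\ell_K^{\tilde{\mathfrak f}_{L/K}}\subseteq D\ell_K^{(\mathfrak m)}$, so both members lie in $D\ell_K^{(\mathfrak m)}$ and it suffices to prove the two inclusions. The inclusion $A\ell_{L/K}\,P\ell_K^{(\mathfrak m)}\subseteq A\ell_{I/K,(\mathfrak m)}$ is formal: by the functoriality of the logarithmic Artin map (Proposition, (iii)) an element of $A\ell_{L/K}$ restricts to the kernel over $I$, and since $\tilde{\mathfrak f}_{I/K}\mid\mathfrak m$ the defining property of the logarithmic conductor makes $(\widetilde{\frac{I/K}{\cdot}})$ factor through the logarithmic ray classes modulo $\mathfrak m$, so it annihilates $P\ell_K^{(\mathfrak m)}$.

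The content is the reverse inclusion, for which I would pass to the idèle-theoretic logarithmic reciprocity map $\rho$ of \cite{Ja1}. Let $d\in A\ell_{I/K,(\mathfrak m)}$ and let $\eta\in\mathcal J_K$ be the idèle carrying the logarithmic uniformizer $\tilde\pi_{\mathfrak q}^{\,\tilde v_{\mathfrak q}(d)}$ at each finite place and trivial elsewhere, so that $\psi(\eta)=d$ and $\rho_{I/K}(\eta)=(\widetilde{\frac{I/K}{d}})=1$. As $I$ is the logarithmic inertia field of $\mathfrak p$ we have $\mathrm{Gal}(L/I)=\widetilde\Gamma_{L/K,\mathfrak p}=\rho_{\mathfrak p}(\widetilde{\mathcal U}_{K_{\mathfrak p}})$, and $\rho_{L/K}(\eta)$ — which dies in $\mathrm{Gal}(I/K)=\mathrm{Gal}(L/K)/\mathrm{Gal}(L/I)$ — lies in that inertia group; choosing $u\in\widetilde{\mathcal U}_{K_{\mathfrak p}}$ with $\rho_{\mathfrak p}(u)=\rho_{L/K}(\eta)$ and letting $[u]$ be the idèle trivial off $\mathfrak p$, one gets $\rho_{L/K}(\eta[u]^{-1})=1$ while $\psi([u])=1$. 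By the reciprocity sequence $\eta[u]^{-1}\in\mathcal R_K\,N_{L/K}(\mathcal J_L)$; writing $\eta[u]^{-1}=\alpha\,N_{L/K}(\beta)$ and clearing, by $\ell$-adic approximation in $L$, the valuations of $\beta$ at the places of $L$ above $\tilde{\mathfrak f}_{L/K}$ (absorbing the norm of the corrector into $\alpha$), I obtain a decomposition $d=\psi(\alpha)\,N_{L/K}(\mathfrak b)$ with $\mathfrak b\in D\ell_L^{\tilde{\mathfrak f}_{L/K}}$ and $\psi(\alpha)$ prime to $\mathfrak m$.

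The remaining — and genuinely delicate — point is that $\psi(\alpha)$, although prime to $\mathfrak m$, need not a priori belong to $P\ell_K^{(\mathfrak m)}=\psi(\mathcal R_K^{(\mathfrak m)})$: the obstruction is the class of $\alpha$ in $\prod_{\mathfrak q\mid\mathfrak m}\widetilde{\mathcal U}_{K_{\mathfrak q}}/\widetilde{\mathcal U}_{K_{\mathfrak q}}^{\,n_{\mathfrak q}}$, where $\tilde{\mathfrak f}_{\mathfrak q}=\mathfrak q^{n_{\mathfrak q}}$. Here I would use the $\ell$-adic approximation theorem (the lemma recalled above) to pick a global $\theta\in\mathcal R_K$ whose components satisfy $\theta_{\mathfrak q}\equiv\alpha_{\mathfrak q}\pmod{\widetilde{\mathcal U}_{K_{\mathfrak q}}^{\,n_{\mathfrak q}}}$ at every $\mathfrak q\mid\mathfrak m$ and $\theta_{\mathfrak p}\in\widetilde{\mathcal U}_{K_{\mathfrak p}}^{\,n_{\mathfrak p}}$ at $\mathfrak p$. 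Then $\alpha\theta^{-1}\in\mathcal R_K^{(\mathfrak m)}$, so $\psi(\alpha\theta^{-1})\in P\ell_K^{(\mathfrak m)}$, and it remains to check $\psi(\theta)\in A\ell_{L/K}$. This is where the local structure is crucial: the idèle identity forces $\alpha_{\mathfrak q}$ to be a \emph{local norm} at each $\mathfrak q\mid\mathfrak m$ (the left side $\eta[u]^{-1}$ being trivial there), and since $\widetilde{\mathcal U}_{K_{\mathfrak q}}^{\,n_{\mathfrak q}}\subseteq N_{L_{\mathfrak Q}/K_{\mathfrak q}}(\mathcal R_{L_{\mathfrak Q}})$ by the very definition of the local conductor, $\theta_{\mathfrak q}$ is again a local norm; likewise $\theta_{\mathfrak p}\in\widetilde{\mathcal U}_{K_{\mathfrak p}}^{\,n_{\mathfrak p}}$ is a local norm, and $\theta$ is a logarithmic unit at every place dividing $\tilde{\mathfrak f}_{L/K}$ so $\psi(\theta)$ is prime to $\tilde{\mathfrak f}_{L/K}$. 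Evaluating $\psi_{L/K}(\psi(\theta))$ through the local symbols and using $\rho_{L/K}(\theta)=1$ (product formula for principal idèles) together with the vanishing of $\rho_{\mathfrak q}(\theta_{\mathfrak q})$ at the ramified places then yields $\psi(\theta)\in A\ell_{L/K}$.

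Combining, \[ d=\psi(\alpha\theta^{-1})\cdot\bigl(\psi(\theta)\,N_{L/K}(\mathfrak b)\bigr),\] with the first factor in $P\ell_K^{(\mathfrak m)}$ and the second in $A\ell_{L/K}$, which is the desired $d\in A\ell_{L/K}\,P\ell_K^{(\mathfrak m)}$. The one step I expect to require real care is the passage from the crude idèle decomposition to one respecting both the support condition at $\mathfrak p$ and the congruences at the ramified primes dividing $\mathfrak m$; everything hinges on the approximation theorem being strong enough to match $\alpha$ modulo $\widetilde{\mathcal U}^{\,n_{\mathfrak q}}$ while keeping the corrector a local norm, which is exactly guaranteed by the conductor inclusion $\widetilde{\mathcal U}_{K_{\mathfrak q}}^{\,n_{\mathfrak q}}\subseteq N_{L_{\mathfrak Q}/K_{\mathfrak q}}(\mathcal R_{L_{\mathfrak Q}})$.
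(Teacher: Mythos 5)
Your argument is correct in substance, but it takes a genuinely different route from the paper's. For the hard inclusion $A\ell_{I/K,(\mathfrak{m})}\subseteq A\ell_{L/K}\,P\ell_{K}^{(\mathfrak{m})}$ (with $\mathfrak{m}=\tilde{\mathfrak{f}}_{L/K}/\tilde{\mathfrak{f}}_{\mathfrak{p}}$), the paper argues softly: it regards both sides as congruence submodules for the single modulus $\mathfrak{m}$ and invokes the Takagi-style correspondence of \cite{Re3}; the right-hand group is associated to $I$, the left-hand group to a field $I'$ containing $I$ inside $L$, and since the conductor of a congruence subgroup mod $\mathfrak{m}$ has the same prime divisors as the logarithmic conductor of the associated field while $\mathfrak{p}\nmid\mathfrak{m}$, the place $\mathfrak{p}$ is logarithmically unramified in $I'$; maximality of the inertia field then forces $I'=I$, whence the two congruence submodules coincide. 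You instead prove the inclusion element by element through the idelic reciprocity map of \cite{Ja1}: lift $d$ to an idèle built from logarithmic uniformizers, correct it by a local logarithmic unit at $\mathfrak{p}$ using $\mathrm{Gal}(L/I)=\widetilde{\Gamma}_{L/K,\mathfrak{p}}=\rho_{\mathfrak{p}}(\widetilde{\mathcal{U}}_{K_{\mathfrak{p}}})$, split the resulting element of the kernel $\mathcal{R}_{K}\,N_{L/K}(\mathcal{J}_{L})$, and repair the principal part by approximation so that $d=\psi(\alpha\theta^{-1})\cdot\bigl(\psi(\theta)\,N_{L/K}(\mathfrak{b})\bigr)$ with the stated memberships. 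Your method buys an explicit decomposition and complete independence from the congruence-submodule formalism, at the price of invoking the full idelic kernel theorem, the characterization of the logarithmic inertia subgroup as the image of the local logarithmic units (a result of \cite{Ja2}, \cite{Re2} that you should cite explicitly at that step), and the norm-residue interpretation of the Artin symbol at unramified places. Two small points would make the write-up airtight: for $\ell=2$ the product formula also involves the ramified real places, so the approximation step must in addition make $\theta$ a local norm there (the surjectivity of semi-localization on any finite set of places allows this); and when you absorb the corrector $N_{L/K}(\gamma)$ into $\alpha$, note that the new $\alpha$ still has local-norm components at every $\mathfrak{q}\mid\mathfrak{m}$, which is exactly what your final verification of $\psi(\theta)\in A\ell_{L/K}$ uses. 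Both are routine, so the proof stands.
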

\medskip

\begin{proof}
Nous avons d'abord l'inclusion $A\ell_{L/K} P\ell_{K}^{{(\frac{\tilde{\mathfrak{f}}_{L/K}} {\tilde{\mathfrak{f}}_{\mathfrak{p}}}}) } \subset A\ell_{I/K, {(\frac{\tilde{\mathfrak{f}}_{L/K}} {\tilde{\mathfrak{f}}_{\mathfrak{p}}}})               }$ : en effet, $I \subset L$ donc $\tilde{\mathfrak{f}}_{I/K} \vert
\tilde{\mathfrak{f}}_{L/K}$ et $\tilde{\mathfrak{f}}_{\mathfrak{p},I/K} \vert
\tilde{\mathfrak{f}}_{\mathfrak{p},L/K}$. Or $\mathfrak{p}$ est une place logarithmiquement non ramifiée dans $I$, donc
$\tilde{\mathfrak{f}}_{\mathfrak{p},I/K} =1$.  Il suit que $\tilde{\mathfrak{f}}_{I/K} \vert \frac{\tilde{\mathfrak{f}}_{L/K}} {\tilde{\mathfrak{f}}_{\mathfrak{p}}}$, nous en déduisons donc que 
$P\ell_{K}^{(\frac{\tilde{\mathfrak{f}}_{L/K}} {\tilde{\mathfrak{f}}_{\mathfrak{p}}})} \subset P\ell_{K}^{ (\tilde{\mathfrak{f}}_{I/K})} \subset A\ell_{I/K}$ de part l'égalité des groupes d'Artin et de Takagi.
D'autre part $\frac{\tilde{\mathfrak{f}}_{L/K}} {\tilde{\mathfrak{f}}_{\mathfrak{p}}} \vert \tilde{\mathfrak{f}}_{L/K}$, nous avons donc $D\ell_{K}^{\tilde{\mathfrak{f}}_{L/K}}  \subset D\ell_{K}^{\frac{\tilde{\mathfrak{f}}_{L/K}} {\tilde{\mathfrak{f}}_{\mathfrak{p}}}} $ et par définition du noyau d'Artin,
nous obtenons $A\ell_{L/K} \subset D\ell_{K}^{\tilde{\mathfrak{f}}_{L/K}}  \subset D\ell_{K}^{\frac{\tilde{\mathfrak{f}}_{L/K}} {\tilde{\mathfrak{f}}_{\mathfrak{p}}} }$. Et comme $I \subset L$, 
nous avons $A\ell_{L/K} \subset A\ell_{I/K}$, d'où la première inclusion.

\noindent Considérons alors les sous-modules de congruences ~\cite[p. 70]{Re3} $(\frac{\tilde{\mathfrak{f}}_{L/K}} {\tilde{\mathfrak{f}}_{\mathfrak{p}}} ,  A\ell_{L/K} P\ell_{K}^{{(\frac{\tilde{\mathfrak{f}}_{L/K}} {\tilde{\mathfrak{f}}_{\mathfrak{p}}}) }} )$ et $(\frac{\tilde{\mathfrak{f}}_{L/K}} {\tilde{\mathfrak{f}}_{\mathfrak{p}}},   A\ell_{I/K, {(\frac{\tilde{\mathfrak{f}}_{L/K}} {\tilde{\mathfrak{f}}_{\mathfrak{p}}}) }  } )$.
Ainsi nous déduisons de l'inclusion  $A\ell_{L/K} P\ell_{K}^{{(\frac{\tilde{\mathfrak{f}}_{L/K}} {\tilde{\mathfrak{f}}_{\mathfrak{p}}}}) } \subset A\ell_{I/K, {(\frac{\tilde{\mathfrak{f}}_{L/K}} {\tilde{\mathfrak{f}}_{\mathfrak{p}}}) }              }$, et de la correspondance
du corps des classes ~\cite[p .76]{Re3} : $A\ell_{I, {(\frac{\tilde{\mathfrak{f}}_{L/K}} {\tilde{\mathfrak{f}}_{\mathfrak{p}}}) }              }$ est associé à $I$, et $A\ell_{K} P\ell_{K}^{{(\frac{\tilde{\mathfrak{f}}_{L/K}} {\tilde{\mathfrak{f}}_{\mathfrak{p}}}) }}$ est associé à la sous-extension $I'$ intermédiaire entre $I$ et $K$. Or $\mathfrak{p}$ ne divise pas $(\frac{\tilde{\mathfrak{f}}_{L/K}} {\tilde{\mathfrak{f}}_{\mathfrak{p}}})$, et le conducteur de la classe de congruences
a les mêmes diviseurs premiers que le conducteur logarithmique global ~\cite[p .78]{Re3}: par suite nous en déduisons que 
 $\mathfrak{p}$ est une place logarithmiquement non ramifiée dans $I'$.
Et de ce fait $I'=I$, nous obtenons alors l'équivalence des deux sous-modules de congruences ci dessus, et
finalement l'égalité.
\end{proof}
\medskip

\begin{proposition}{Caractérisation du noyau du symbole de Hasse logarithmique}

\noindent Une condition nécessaire et suffisante pour que $\widetilde{ ( \frac{\alpha,L/K} { \mathfrak{p}}) }=1$, où $\alpha \in \mathcal{R}_{K}$ est que $\alpha$ soit norme locale en $\mathfrak{p}$.
\end{proposition}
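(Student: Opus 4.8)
\emph{Idée de la preuve.} Le plan est d'identifier le symbole de Hasse logarithmique à la composante locale en $\mathfrak{p}$ du symbole de réciprocité, puis d'invoquer la théorie $\ell$-adique locale du corps des classes. Plus précisément, je voudrais établir l'égalité $\widetilde{(\frac{\alpha,L/K}{\mathfrak{p}})} = (\alpha, L_{\mathfrak{P}}/K_{\mathfrak{p}})^{-1}$, où $(\,\cdot\,, L_{\mathfrak{P}}/K_{\mathfrak{p}})$ désigne la composante en $\mathfrak{p}$ du symbole d'idèles $(\,\cdot\,, L/K)$. La proposition en résulterait aussitôt, puisque le noyau du symbole local est exactement le groupe des normes locales $N_{L_{\mathfrak{P}}/K_{\mathfrak{p}}}(\mathcal{R}_{L_{\mathfrak{P}}})$.

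Pour établir cette égalité, je choisirais un $\mathfrak{p}$-associé logarithmique $\beta$ de $\alpha$ et j'écrirais $\psi(\beta) = \mathfrak{p}^{\mathfrak{a}} a$ avec $a$ premier à $\mathfrak{p}$, de sorte que $\widetilde{(\frac{\alpha,L/K}{\mathfrak{p}})} = \widetilde{(\frac{L/K}{a})}$. Comme $\beta$ est un idèle \emph{principal}, la loi de réciprocité de la théorie $\ell$-adique du corps des classes \cite{Ja1} donne la trivialité du symbole global : $(\beta, L/K) = \prod_{\mathfrak{q}} (\beta, L_{\mathfrak{Q}}/K_{\mathfrak{q}}) = 1$. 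L'idée est alors de scinder ce produit selon trois familles de places.

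Premièrement, pour $\mathfrak{q}$ divisant $\frac{\tilde{\mathfrak{f}}_{L/K}}{\tilde{\mathfrak{f}}_{\mathfrak{p}}}$, la condition $\beta \in \mathcal{R}_{K}^{(\frac{\tilde{\mathfrak{f}}_{L/K}}{\tilde{\mathfrak{f}}_{\mathfrak{p}}})}$ place $\beta_{\mathfrak{q}}$ dans $\widetilde{\mathcal{U}}_{K_{\mathfrak{q}}}^{\,v_{\mathfrak{q}}(\tilde{\mathfrak{f}}_{L/K})}$, contenu dans $N_{L_{\mathfrak{Q}}/K_{\mathfrak{q}}}(\mathcal{R}_{L_{\mathfrak{Q}}})$ par la définition même du conducteur logarithmique local ; ces facteurs sont donc triviaux. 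Deuxièmement, aux places $\mathfrak{q} \neq \mathfrak{p}$ logarithmiquement non ramifiées, le symbole local vaut $(\widetilde{\frac{L/K}{\mathfrak{q}}})^{\tilde{v}_{\mathfrak{q}}(\beta)}$, et leur produit reconstitue exactement $\widetilde{(\frac{L/K}{a})}$ (ici $a$ est bien premier au conducteur, étant supporté hors de $\mathfrak{p}$ et de $\frac{\tilde{\mathfrak{f}}_{L/K}}{\tilde{\mathfrak{f}}_{\mathfrak{p}}}$). Troisièmement, en $\mathfrak{p}$, la condition $\frac{\beta}{\alpha} \in \mathcal{R}_{K}^{(\tilde{\mathfrak{f}}_{\mathfrak{p}})}$ met $\frac{\beta}{\alpha}$ dans $\widetilde{\mathcal{U}}_{K_{\mathfrak{p}}}^{n} \subseteq N_{L_{\mathfrak{P}}/K_{\mathfrak{p}}}(\mathcal{R}_{L_{\mathfrak{P}}})$, d'où $(\beta, L_{\mathfrak{P}}/K_{\mathfrak{p}}) = (\alpha, L_{\mathfrak{P}}/K_{\mathfrak{p}})$. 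En reportant dans la trivialité du produit global, j'obtiendrais $(\alpha, L_{\mathfrak{P}}/K_{\mathfrak{p}}) \cdot \widetilde{(\frac{L/K}{a})} = 1$, soit l'égalité annoncée.

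La conclusion serait alors immédiate : par la théorie $\ell$-adique locale du corps des classes, $(\alpha, L_{\mathfrak{P}}/K_{\mathfrak{p}}) = 1$ équivaut à $\alpha \in N_{L_{\mathfrak{P}}/K_{\mathfrak{p}}}(\mathcal{R}_{L_{\mathfrak{P}}})$, c'est-à-dire à $\alpha$ norme locale en $\mathfrak{p}$. L'étape la plus délicate sera la décomposition compatible du symbole global en symboles locaux, c'est-à-dire la justification précise de $(\beta, L/K) = \prod_{\mathfrak{q}} (\beta, L_{\mathfrak{Q}}/K_{\mathfrak{q}})$ et de la trivialité sur les idèles principaux dans le cadre logarithmique ; elle repose sur la compatibilité global-local de la réciprocité $\ell$-adique, qu'il faudra citer soigneusement (\cite{Ja1}, \cite{Re1}). À titre de vérification, le cas $\mathfrak{p} \nmid \tilde{\mathfrak{f}}_{L/K}$ redonne bien la proposition ii) précédente : $\widetilde{(\frac{\alpha,L/K}{\mathfrak{p}})} = (\widetilde{\frac{L/K}{\mathfrak{p}}})^{-\mathfrak{a}}$ est trivial si et seulement si $\tilde{f}_{\mathfrak{p}} \mid \tilde{v}_{\mathfrak{p}}(\alpha)$, condition équivalente au fait que $\alpha$ soit norme locale dans l'extension non ramifiée $L_{\mathfrak{P}}/K_{\mathfrak{p}}$.
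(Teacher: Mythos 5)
Votre preuve est correcte, mais elle emprunte une route réellement différente de celle de l'article : vous établissez en fait l'identité plus forte $\widetilde{(\frac{\alpha,L/K}{\mathfrak{p}})}=(\alpha_{\mathfrak{p}},L_{\mathfrak{P}}/K_{\mathfrak{p}})^{-1}$, en décomposant le symbole idélique global du $\mathfrak{p}$-associé $\beta$ (trivial sur les idèles principaux d'après \cite{Ja1}) en ses composantes locales, que les conditions de conducteur annulent dans votre première famille, ramènent à $(\alpha_{\mathfrak{p}},L_{\mathfrak{P}}/K_{\mathfrak{p}})$ en $\mathfrak{p}$, et font se recombiner en $(\widetilde{\frac{L/K}{a}})$ aux places logarithmiquement non ramifiées ; la proposition résulte alors du fait que le noyau du symbole local est le groupe des normes locales. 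L'article procède sens par sens et autrement : pour le sens direct, il construit explicitement, via le lemme d'approximation $\ell$-adique appliqué dans $L$, un $\mathfrak{p}$-associé de $\alpha$ de la forme $N_{L/K}(y)$, et conclut par le seul fait que les normes de diviseurs logarithmiques premiers au conducteur sont dans le noyau d'Artin ; pour la réciproque, il compte des indices : le théorème 2.1.1 (image du symbole $=$ sous-groupe de décomposition $D_{\mathfrak{p}}$) donne la suite exacte $1 \to X_{\mathfrak{p}} \to \mathcal{R}_{K} \to D_{\mathfrak{p}} \to 1$, puis l'isomorphisme local $\mathcal{R}_{K_{\mathfrak{p}}}/N_{L_{\mathfrak{P}}/K_{\mathfrak{p}}}\mathcal{R}_{L_{\mathfrak{P}}} \simeq \mathrm{Gal}(L_{\mathfrak{P}}/K_{\mathfrak{p}})$, joint à la densité de $\mathcal{R}_{K}$ dans $\mathcal{R}_{K_{\mathfrak{p}}}$, force $X_{\mathfrak{p}}=\mathcal{R}_{K}\cap N_{L_{\mathfrak{P}}/K_{\mathfrak{p}}}\mathcal{R}_{L_{\mathfrak{P}}}$ par comparaison d'indices. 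Votre identité maîtresse achète davantage --- elle redonne d'un coup le théorème 2.1.1, la présente proposition et la formule du produit (théorème 2.2.1), sans cercle vicieux puisqu'elle ne s'appuie que sur les définitions et sur \cite{Ja1} --- mais elle repose entièrement sur la compatibilité local-global de la réciprocité $\ell$-adique (décomposition du symbole global en symboles locaux, trivialité sur $\mathcal{R}_{K}$), le point délicat que vous signalez à juste titre et qu'il faudra citer précisément ; la preuve de l'article, elle, n'utilise que le formalisme d'Artin--Takagi logarithmique, l'approximation et le calcul d'indice local. Deux compléments rendront votre argument étanche : (i) ranger les places à l'infini (cas $\ell=2$) dans votre première famille, grâce à la convention de l'article selon laquelle les places à l'infini ramifiées divisent le conducteur logarithmique ; (ii) noter que votre deuxième famille est presque définitionnelle ici, le Frobenius logarithmique étant défini dans l'article comme $([\tilde{\pi}_{\mathfrak{q}}],L/K)$, image par la réciprocité d'un idèle concentré en $\mathfrak{q}$, ce qui garantit que le produit de vos symboles locaux non ramifiés est bien le symbole d'Artin logarithmique de $a$.
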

\medskip

\begin{proof}
Supposons que $\alpha$ soit norme locale en $\mathfrak{p}$ alors $\alpha_{\mathfrak{p}} \in N_{L_{\mathfrak{P}}/K_{\mathfrak{p}} } \mathcal{R}_{L_{\mathfrak{P}}}$. Il existe donc $z_{\mathfrak{P}} \in \mathcal{R}_{L_{\mathfrak{P}}}$ tel que $\alpha_{\mathfrak{p}} = N_{L_{\mathfrak{P}}/K_{\mathfrak{p}} } (z_{\mathfrak{P}})$.
\'Ecrivons alors $\alpha =N_{L/K}(x) u$ avec $ x \in \mathcal{R}_{L}$ dont toutes les coordonnées valent $1$, sauf $x_{\mathfrak{P}}$ pour $\mathfrak{P} \vert \mathfrak{p}$ que nous prenons égal à $z_{\mathfrak{P}}$ ; et $u \in \mathcal{R}_{K}^{(\tilde{ \mathfrak{f}}_{\mathfrak{p}} )}$ (en fait $u=\alpha$ en dehors de la $\mathfrak{p}$-ième
composante que l'on prend égale à $1$). Alors grâce au lemme d'approximation $\ell$-adique, nous pouvons trouver $y \in \mathcal{R}_{L}$ 
tel que $\frac{y}{x} \in \mathcal{R}_{L}^{(\tilde{ \mathfrak{f}}_{\mathfrak{p}} )}$ et $y \in \mathcal{R}_{L}^{(\frac{ \tilde{\mathfrak{f}}_{L/K}   } {\tilde{ \mathfrak{f}}_{\mathfrak{p}} })}$, les modules étant les étendus à $L$.
Ainsi nous avons $N_{L/K}(\frac{y}{x}) \in \mathcal{R}_{K}^{(\tilde{ \mathfrak{f}}_{\mathfrak{p} })}$ et $N_{L/K}(y) \in \mathcal{R}_{K}^{(\frac{ \tilde{\mathfrak{f}}_{L/K}} {\tilde{ \mathfrak{f}}_{\mathfrak{p} }})}$.
Par suite, $N_{L/K}(y)$ est un $\mathfrak{p}$-associé de $\alpha$. \'Ecrivons $\psi(y)= \mathcal{O} \mathcal{U}$ avec $\mathcal{O}$ qui contient les diviseurs premiers au dessus de $\mathfrak{p}$ et $\mathcal{U}$ étant premier à $\mathfrak{p}$ i.e $\mathcal{U} \in P\ell_{L}^{(\tilde{\mathfrak{f}}_{L/K})}$. Ainsi $\psi(N_{L/K}(y))= N_{L/K}(\psi(y))=N_{L/K}(\mathcal{O}) N_{L/K}(\mathcal{U})$. Il suit alors $\widetilde{ ( \frac{\alpha,L/K} { \mathfrak{p}}) }=(\widetilde{\frac{L/K}{N_{L/K} (\mathcal{U}) }})=1$ puisque 
$N_{L/K} P\ell_{L}^{(\tilde{\mathfrak{f}}_{L/K})} \subset A\ell_{K}$.
\medskip

\noindent Réciproquement supposons $\widetilde{ ( \frac{\alpha,L/K} { \mathfrak{p}}) }=1$, et désignons par $X_{\mathfrak{p}}$  l'ensemble
des éléments $\alpha \in \mathcal{R}_{K}$ tels que $\widetilde{ ( \frac{\alpha,L/K} { \mathfrak{p}}) }=1$. Alors nous avons la suite exacte suivante :
$$ 1 \longrightarrow X_{\mathfrak{p}} \longrightarrow \mathcal{R}_{K} \longrightarrow D_{\mathfrak{p}} \longrightarrow 1$$
\noindent où $D_{\mathfrak{p}}$ désigne le sous-groupe de décomposition de $\mathfrak{p}$.
D'après la partie directe, nous savons que $\mathcal{R}_{K} \cap N_{L_{\mathfrak{P}}/K_{\mathfrak{p}} } \mathcal{R}_{L_{\mathfrak{P}}} \subseteq  X_{\mathfrak{p}} $. Or $\frac{\mathcal{R}_{K_{\mathfrak{p}}} } {N_{L_{\mathfrak{P}}/K_{\mathfrak{p}} } \mathcal{R}_{L_{\mathfrak{P}}}}$ est isomorphe au groupe de Galois local, et est d'indice $e_{\mathfrak{p}} f_{\mathfrak{p}}= \widetilde{e_{\mathfrak{p}}} \widetilde{f_{\mathfrak{p}}}$.
Or $\frac {\mathcal{R}_{K_{\mathfrak{p}}}  } { N_{L_{\mathfrak{P}}/K_{\mathfrak{p}} } \mathcal{R}_{L_{\mathfrak{P}}} } \simeq
\frac{\mathcal{R}_{K}} {\mathcal{R}_{K} \cap N_{L_{\mathfrak{P}}/K_{\mathfrak{p}} } \mathcal{R}_{L_{\mathfrak{P}}} }$,
par suite nous en déduisons l'égalité $X_{\mathfrak{p}} =\mathcal{R}_{K} \cap N_{L_{\mathfrak{P}}/K_{\mathfrak{p}} } \mathcal{R}_{L_{\mathfrak{P}}}  $.
\end{proof}
\medskip

\medskip

\noindent \textbf{Remarque :}
Le symbole de Hasse logarithmique permet également, comme dans le cas classique, une description du sous-groupe d'inertie logarithmique
associé à une place $\mathfrak{p}$ et une $\ell$-extension abélienne finie $L/K$, celui étant défini comme le sous-groupe du groupe de Galois de l'extension, qui fixe l'extension maximale de $K$ logarithmiquement non ramifiée.
\medskip

\begin{theorem}
$L/K$ étant une $\ell$-extension abélienne finie, la restriction du symbole de Hasse logarithmique aux idèles principaux dont le diviseur
logarithmique principal est premier à $\mathfrak{p}$ a pour image le sous-groupe d'inertie logarithmique pour $\mathfrak{p}$ dans $L/K$.
\index{corps d'inertie logarithmique}
\end{theorem}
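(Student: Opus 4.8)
Le plan est d'établir les deux inclusions en ramenant, dans chaque cas, le calcul du symbole à la situation locale en $\mathfrak{p}$. Notons $I$ le corps d'inertie logarithmique de $\mathfrak{p}$ dans $L/K$, de sorte que $\widetilde{\Gamma}_{L/K,\mathfrak{p}}=\textrm{Gal}(L/I)$, et rappelons que les idèles principaux dont le diviseur logarithmique est premier à $\mathfrak{p}$ sont exactement les $\alpha\in\mathcal{R}_{K}$ vérifiant $\tilde v_{\mathfrak{p}}(\alpha)=0$, c'est-à-dire $\alpha_{\mathfrak{p}}\in\widetilde{\mathcal{U}}_{K_{\mathfrak{p}}}$.

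Pour l'inclusion de l'image dans l'inertie, je partirais d'un tel $\alpha$ et j'utiliserais la propriété de restriction du symbole (point iii) de la proposition sur ses premières propriétés) : la restriction de $\widetilde{(\frac{\alpha,L/K}{\mathfrak{p}})}$ à $I$ est $\widetilde{(\frac{\alpha,I/K}{\mathfrak{p}})}$. Comme $\mathfrak{p}$ est logarithmiquement non ramifiée dans $I/K$, son conducteur logarithmique local y est trivial, ce qui, par la définition des conducteurs logarithmiques, signifie $\widetilde{\mathcal{U}}_{K_{\mathfrak{p}}}=\widetilde{\mathcal{U}}_{K_{\mathfrak{p}}}^{0}\subseteq N_{I_{\mathfrak{P}}/K_{\mathfrak{p}}}(\mathcal{R}_{I_{\mathfrak{P}}})$. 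Puisque $\alpha_{\mathfrak{p}}\in\widetilde{\mathcal{U}}_{K_{\mathfrak{p}}}$, l'idèle $\alpha$ est norme locale en $\mathfrak{p}$ dans $I/K$, et la caractérisation du noyau du symbole appliquée à $I/K$ donne $\widetilde{(\frac{\alpha,I/K}{\mathfrak{p}})}=1$. Ainsi $\widetilde{(\frac{\alpha,L/K}{\mathfrak{p}})}$ fixe $I$ et appartient à $\textrm{Gal}(L/I)=\widetilde{\Gamma}_{L/K,\mathfrak{p}}$.

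Pour la surjectivité, j'exploiterais la factorisation fournie par la caractérisation du noyau : le symbole induit un isomorphisme $\mathcal{R}_{K}/(\mathcal{R}_{K}\cap N_{L_{\mathfrak{P}}/K_{\mathfrak{p}}}\mathcal{R}_{L_{\mathfrak{P}}})\xrightarrow{\sim}D_{\mathfrak{p}}$, compatible avec l'isomorphisme local $\mathcal{R}_{K_{\mathfrak{p}}}/N_{L_{\mathfrak{P}}/K_{\mathfrak{p}}}\mathcal{R}_{L_{\mathfrak{P}}}\simeq D_{\mathfrak{p}}$ de la théorie locale du corps des classes. L'image des idèles premiers à $\mathfrak{p}$ s'identifie alors à l'image de $\widetilde{\mathcal{U}}_{K_{\mathfrak{p}}}$ dans ce quotient : en effet, par le lemme d'approximation $\ell$-adique, le morphisme de semi-localisation $\mathcal{R}_{K}\to\mathcal{R}_{K_{\mathfrak{p}}}$ est surjectif, et l'ensemble des $\alpha$ premiers à $\mathfrak{p}$ se surjecte sur $\widetilde{\mathcal{U}}_{K_{\mathfrak{p}}}$. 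Il ne reste alors qu'à invoquer le fait local, analogue logarithmique de la théorie locale classique, que l'image de $\widetilde{\mathcal{U}}_{K_{\mathfrak{p}}}$ par la réciprocité locale est précisément le sous-groupe d'inertie $\widetilde{\Gamma}_{L/K,\mathfrak{p}}$, ce qui conclut.

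La difficulté principale est de nature locale : il s'agit d'identifier l'image de $\widetilde{\mathcal{U}}_{K_{\mathfrak{p}}}$ par la réciprocité locale logarithmique avec le sous-groupe d'inertie $\widetilde{\Gamma}_{L/K,\mathfrak{p}}$, l'analogue logarithmique du fait classique selon lequel les unités engendrent l'inertie. La compatibilité entre le symbole global et le quotient local étant déjà essentiellement contenue dans la caractérisation du noyau, le reste de l'argument est alors purement formel et ne repose que sur les propositions précédentes et le lemme d'approximation.
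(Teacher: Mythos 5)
Votre première moitié (l'inclusion de l'image dans l'inertie) est correcte, et emprunte d'ailleurs une route un peu différente de celle du texte : vous combinez la propriété de restriction (proposition 2.1.1 iii), la trivialité du conducteur logarithmique local aux places logarithmiquement non ramifiées, et la caractérisation du noyau (proposition 2.1.2) appliquée à $I/K$, là où le texte travaille directement sur le $\mathfrak{p}$-associé $\beta$ et les divisibilités de conducteurs pour conclure $(\widetilde{\frac{I/K}{\psi(\beta)}})=1$. Les deux arguments sont de même teneur et tout aussi valables, puisque la proposition 2.1.2 est disponible avant ce théorème.

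En revanche, votre preuve de la surjectivité comporte une lacune réelle. Elle repose sur l'affirmation que l'isomorphisme $\mathcal{R}_{K_{\mathfrak{p}}}/N_{L_{\mathfrak{P}}/K_{\mathfrak{p}}}\mathcal{R}_{L_{\mathfrak{P}}}\simeq D_{\mathfrak{p}}$ induit par le symbole de Hasse logarithmique \emph{est} l'isomorphisme de réciprocité locale, compatibilité que vous dites essentiellement contenue dans la caractérisation du noyau. Ce n'est pas le cas : connaître le noyau montre que le symbole se factorise en \emph{un} isomorphisme à travers le quotient local, mais ne détermine pas lequel — deux isomorphismes distincts entre les mêmes groupes ont le même noyau trivial. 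Or le symbole est ici défini par voie divisorielle ($\mathfrak{p}$-associés et application d'Artin logarithmique) ; son identification avec le symbole de réciprocité locale est un théorème de compatibilité locale-globale qui n'est établi nulle part dans l'article, et qui est essentiellement de même profondeur que l'énoncé à démontrer. Sans cette identification, le fait local que vous invoquez (l'image de $\widetilde{\mathcal{U}}_{K_{\mathfrak{p}}}$ par la réciprocité locale est l'inertie logarithmique) ne se transfère pas à l'application induite par le symbole. La preuve du texte contourne entièrement la théorie locale : partant de $\sigma$ dans l'inertie, elle utilise la surjectivité de l'application d'Artin logarithmique pour écrire $\sigma=(\widetilde{\frac{L/K}{b}})$, puis le lemme 2.1.2, à savoir l'égalité $A\ell_{L/K}\,P\ell_{K}^{(\frac{\tilde{\mathfrak{f}}_{L/K}}{\tilde{\mathfrak{f}}_{\mathfrak{p}}})}=A\ell_{I/K,(\frac{\tilde{\mathfrak{f}}_{L/K}}{\tilde{\mathfrak{f}}_{\mathfrak{p}}})}$, pour obtenir $b=c\,\psi(\alpha)$ avec $\alpha\in\mathcal{R}_{K}^{(\frac{\tilde{\mathfrak{f}}_{L/K}}{\tilde{\mathfrak{f}}_{\mathfrak{p}}})}$ premier à $\mathfrak{p}$ et $c\in A\ell_{L/K}$, d'où $\widetilde{(\frac{\alpha,L/K}{\mathfrak{p}})}=\sigma$. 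Notez enfin qu'on pourrait sauver votre schéma en combinant votre première inclusion avec un comptage : il suffirait de savoir que $(\widetilde{\mathcal{U}}_{K_{\mathfrak{p}}}:\widetilde{\mathcal{U}}_{K_{\mathfrak{p}}}\cap N_{L_{\mathfrak{P}}/K_{\mathfrak{p}}}\mathcal{R}_{L_{\mathfrak{P}}})=\tilde{e}_{L_{\mathfrak{P}}/K_{\mathfrak{p}}}$ pour conclure par égalité des ordres ; mais c'est encore un résultat de théorie locale logarithmique extérieur à l'article, alors que le lemme 2.1.2 reste dans le cadre que l'article a lui-même établi.
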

\medskip

\begin{proof}
Notons $\mathcal{R}_{K,\mathfrak{p}}$ l'ensemble des idèles principaux dont le diviseur logarithmique principal est premier à $\mathfrak{p}$.
Si $\alpha \in \mathcal{R}_{K,\mathfrak{p}}$, un $\mathfrak{p}$-associé de $\alpha$, $\beta$ a pour image un diviseur logarithmique principal $\psi(\beta)$ premier à $\mathfrak{p}$ puisque $\frac{\beta}{\alpha} \in \mathcal{R}_{K}^{(\tilde{\mathfrak{f}}_{\mathfrak{p}})}$, et
$\widetilde{ ( \frac{\alpha,L/K} { \mathfrak{p}})}=(\widetilde{\frac{L/K}{\psi(\beta)}})$. Désignons par $I$ le corps d'inertie logarithmique
et par $I_{\mathfrak{p}}$ le sous-groupe d'inertie logarithmique. Par définition du $\mathfrak{p}$-associé, nous savons que $\beta \in \mathcal{R}_{K}^{(\frac{\tilde{\mathfrak{f}}_{L/K}} {\tilde{\mathfrak{f}}_{\mathfrak{p}}}) }$, Or $\tilde{\mathfrak{f}}_{I/K} \vert 
\tilde{\mathfrak{f}}_{L/K}=\tilde{\mathfrak{f}}_{\mathfrak{p}} \frac{\tilde{\mathfrak{f}}_{L/K}} {\tilde{\mathfrak{f}}_{\mathfrak{p}}}$, comme $\tilde{\mathfrak{f}}_{I/K}$ est premier avec $\tilde{\mathfrak{f}}_{\mathfrak{p}}$, nous obtenons que $\frac{\tilde{\mathfrak{f}}_{L/K}} {\tilde{\mathfrak{f}}_{\mathfrak{p}}}$ est un multiple de $\tilde{\mathfrak{f}}_{I/K}$, ainsi de part l'expression du noyau de l'application d'Artin : $(\widetilde{\frac{I/K}{\psi(\beta)}})=1$. 
\smallskip

\noindent Réciproquement, supposons que $\sigma \in I_{\mathfrak{p}}$, par la surjectivité du symbole d'Artin logarithmique il existe $b \in D\ell_{K}^{\tilde{\mathfrak{f}}_{L/K}}$ tel que $(\widetilde{\frac{L/K}{b}})=\sigma.$
Comme $\sigma \in I_{\mathfrak{p}} $, 
nous avons $(\widetilde{\frac{I/K}{b}})=1$. Finalement, $b \in A\ell_{I/K,\tilde{\mathfrak{f}}_{L/K}} \subset
A\ell_{I/K,\frac{\tilde{\mathfrak{f}}_{L/K}} {\tilde{\mathfrak{f}}_{\mathfrak{p}}}}$. 
Par le lemme précédent, nous savons que 
$A\ell_{L/K} P\ell_{K}^{(\frac{\tilde{\mathfrak{f}}_{L/K}} {\tilde{\mathfrak{f}}_{\mathfrak{p}}}) }= A\ell_{I/K, {(\frac{\tilde{\mathfrak{f}}_{L/K}} {\tilde{\mathfrak{f}}_{\mathfrak{p}}}) }             }$ : il existe donc  $\alpha \in \mathcal{R}_{K}^{(\frac{\tilde{\mathfrak{f}}_{L/K}} {\tilde{\mathfrak{f}}_{\mathfrak{p}}}) }$ et $c \in A\ell_{L/K}$ tels que $b=c \psi(\alpha)$ et $\alpha $ est premier avec $\mathfrak{p}$ car $b$ et $c$ le sont. Ainsi, comme $\alpha$ est son propre $\mathfrak{p}$-associé, nous obtenons :
$\widetilde{ ( \frac{\alpha,L/K} { \mathfrak{p}})}=(\widetilde{\frac{L/K}{\psi(\alpha)}})=(\widetilde{\frac{L/K}{b c^{-1}}})=(\widetilde{\frac{L/K}{b}})=\sigma.$

\end{proof}

\subsection{La formule du produit et sa réciproque pour le symbole de Hasse logarithmique}
\medskip

\begin{theorem}{Formule du produit pour le symbole de Hasse logarithmique}

\noindent Soit $L/K$ une $\ell$-extension abélienne finie et $\alpha \in \mathcal{R}_{K}$, alors
le symbole de Hasse logarithmique vérifie la formule du produit :

$$ \prod_{\mathfrak{p}} \widetilde{ ( \frac{\alpha,L/K} { \mathfrak{p}} )}=1 $$

\end{theorem}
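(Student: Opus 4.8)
Le plan est d'exploiter que le symbole est un homomorphisme en $\alpha$ et que l'application d'Artin logarithmique annule les diviseurs logarithmiques principaux premiers au conducteur, afin de ramener le produit tout entier au symbole d'Artin d'un unique tel diviseur. Je remarque d'abord que le produit est fini : d'après l'assertion ii) de la proposition, $\widetilde{(\frac{\alpha,L/K}{\mathfrak{p}})} = (\widetilde{\frac{L/K}{\mathfrak{p}}})^{-\tilde v_\mathfrak{p}(\alpha)}$ pour toute place $\mathfrak{p}\nmid\tilde{\mathfrak{f}}_{L/K}$, terme trivial dès que $\tilde v_\mathfrak{p}(\alpha)=0$ ; seules les places, en nombre fini, divisant $\tilde{\mathfrak{f}}_{L/K}$ ou figurant dans $\psi(\alpha)$ contribuent. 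Posant $S = \{\mathfrak{p} : \mathfrak{p}\mid\tilde{\mathfrak{f}}_{L/K}\}$, ce qui précède donne pour la partie non ramifiée $\prod_{\mathfrak{p}\notin S}\widetilde{(\frac{\alpha,L/K}{\mathfrak{p}})} = (\widetilde{\frac{L/K}{\prod_{\mathfrak{p}\notin S}\mathfrak{p}^{-\tilde v_\mathfrak{p}(\alpha)}}})$.

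Ensuite, pour chaque $\mathfrak{p}\in S$ je choisis un $\mathfrak{p}$-associé logarithmique $\beta^{(\mathfrak{p})}$ de $\alpha$ (dont l'existence est assurée par le lemme d'approximation $\ell$-adique), et j'écris $\psi(\beta^{(\mathfrak{p})}) = \mathfrak{p}^{\mathfrak{a}_\mathfrak{p}}a^{(\mathfrak{p})}$ avec $a^{(\mathfrak{p})}$ premier à $\mathfrak{p}$ ; comme $\beta^{(\mathfrak{p})}/\alpha$ est une unité logarithmique en $\mathfrak{p}$, on a $\mathfrak{a}_\mathfrak{p}=\tilde v_\mathfrak{p}(\alpha)$, et par définition $\widetilde{(\frac{\alpha,L/K}{\mathfrak{p}})} = (\widetilde{\frac{L/K}{a^{(\mathfrak{p})}}})$. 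En multipliant le tout et en utilisant que l'application d'Artin sur $D\ell_K^{\tilde{\mathfrak{f}}_{L/K}}$ ainsi que $\psi$ sont des homomorphismes, j'obtiens
$$\prod_\mathfrak{p}\widetilde{(\frac{\alpha,L/K}{\mathfrak{p}})} = (\widetilde{\frac{L/K}{\psi(\delta)}}), \qquad \delta := \alpha^{-1}\prod_{\mathfrak{p}\in S}\beta^{(\mathfrak{p})},$$
car $\prod_{\mathfrak{p}\notin S}\mathfrak{p}^{-\tilde v_\mathfrak{p}(\alpha)}\cdot\prod_{\mathfrak{p}\in S}a^{(\mathfrak{p})} = \psi(\alpha)^{-1}\prod_{\mathfrak{p}\in S}\psi(\beta^{(\mathfrak{p})}) = \psi(\delta)$.

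L'étape décisive, celle qui demande le plus de soin, consiste alors à vérifier que $\delta$ appartient à $\mathcal{R}_K^{(\tilde{\mathfrak{f}}_{L/K})}$. En une place ramifiée $\mathfrak{q}\in S$, je décompose $\delta_\mathfrak{q} = (\beta^{(\mathfrak{q})}/\alpha)_\mathfrak{q}\prod_{\mathfrak{p}\in S,\,\mathfrak{p}\neq\mathfrak{q}}\beta^{(\mathfrak{p})}_\mathfrak{q}$ : le premier facteur appartient à $\widetilde{\mathcal{U}}_{K_\mathfrak{q}}^{v_\mathfrak{q}(\tilde{\mathfrak{f}}_\mathfrak{q})}$ par la première condition définissant un $\mathfrak{q}$-associé, tandis que chacun des facteurs restants y appartient aussi par la seconde condition $\beta^{(\mathfrak{p})}\in\mathcal{R}_K^{(\tilde{\mathfrak{f}}_{L/K}/\tilde{\mathfrak{f}}_\mathfrak{p})}$ (on rappelle que $v_\mathfrak{q}(\tilde{\mathfrak{f}}_\mathfrak{q}) = v_\mathfrak{q}(\tilde{\mathfrak{f}}_{L/K})$ pour $\mathfrak{q}\neq\mathfrak{p}$). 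Leur produit est donc une unité logarithmique du niveau voulu en $\mathfrak{q}$ ; hors de $S$ aucune condition n'est imposée, d'où $\delta\in\mathcal{R}_K^{(\tilde{\mathfrak{f}}_{L/K})}$. Par conséquent $\psi(\delta)\in P\ell_K^{(\tilde{\mathfrak{f}}_{L/K})}\subset A\ell_{L/K}$, noyau de l'application d'Artin logarithmique, si bien que $(\widetilde{\frac{L/K}{\psi(\delta)}})=1$ et la formule du produit en résulte. Je m'attends à ce que la seule véritable difficulté soit cette vérification que les contributions locales des divers associés en chaque place ramifiée se recombinent en une unique unité logarithmique de haut niveau, le reste n'étant que manipulation formelle des propriétés d'homomorphisme déjà établies.
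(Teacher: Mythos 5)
Votre démonstration suit, pour les places finies, exactement la stratégie du papier : choix de $\mathfrak{p}$-associés, réduction du produit au symbole d'Artin logarithmique du diviseur $\psi(\delta)$ avec $\delta=\alpha^{-1}\prod_{\mathfrak{p}\in S}\beta^{(\mathfrak{p})}$, appartenance de $\delta$ à $\mathcal{R}_{K}^{(\tilde{\mathfrak{f}}_{L/K})}$ par coprimalité deux à deux des conducteurs locaux, et conclusion via $P\ell_{K}^{(\tilde{\mathfrak{f}}_{L/K})}\subseteq A\ell_{L/K}$. (La seule variante est mineure : le papier choisit aussi des associés aux places non ramifiées divisant $\psi(\alpha)$, là où vous invoquez directement la proposition 2.1.1 ii) ; votre vérification de la recombinaison locale est d'ailleurs rédigée plus proprement que celle du papier.) Il y a cependant une lacune réelle : vous ignorez totalement les places à l'infini. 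Le papier, lui, adjoint explicitement à la liste des places contribuantes les places à l'infini logarithmiquement ramifiées (par convention, celles qui sont ramifiées au sens classique), leur attribue aussi des associés $\beta_{i}$ avec la convention $\mathfrak{a}_{i}=0$, et c'est le produit étendu à ces places qui vaut $1$ ; toute votre construction (valuations $\tilde v_{\mathfrak{p}}$, application $\psi$, Frobenius) ne porte que sur les places finies.

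Cette omission est fatale pour $\ell=2$. Prenez $K=\mathbb{Q}$, $L=\mathbb{Q}(i)$, $\alpha=-1$ : comme $-1$ n'est pas norme locale en $2$, le symbole en $2$ est non trivial (caractérisation du noyau du symbole de Hasse logarithmique), tandis qu'en toute place finie impaire le symbole vaut $1$ (place logarithmiquement non ramifiée et $\tilde v_{p}(-1)=0$ puisque $\mathrm{Log}_{Iw}(-1)=0$). Le produit restreint aux places finies n'est donc pas égal à $1$ ; la contribution qui le compense est exactement celle de la place réelle. Comme votre calcul ramenant ce produit fini à $(\widetilde{\frac{L/K}{\psi(\delta)}})$ est correct, le point précis où votre argument casse est l'inclusion finale $P\ell_{K}^{(\tilde{\mathfrak{f}}_{L/K})}\subseteq A\ell_{L/K}$ : pour $\ell=2$, le noyau de l'application d'Artin logarithmique ne contient les diviseurs logarithmiques principaux $\psi(\delta)$ que si $\delta$ satisfait en outre les conditions de signe (c'est-à-dire de norme locale) aux places réelles ramifiées, conditions que votre $\delta$, construit sans aucune contrainte archimédienne, ne vérifie pas en général --- l'exemple ci-dessus montre même que $\psi(\delta)\notin A\ell_{L/K}$. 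Pour $\ell$ impair, les places archimédiennes ne portent rien et votre preuve est alors complète et fidèle à celle du papier ; pour $\ell=2$, il faut inclure les places réelles ramifiées dans $S$ et imposer aux associés les conditions correspondantes, comme le fait le papier.
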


\begin{proof}
\noindent Ce produit a tout d'abord un sens en vertu de la proposition 2.1.1 précédente :
 si $\mathfrak{p} \not \vert \widetilde{\mathfrak{f}_{L/K}}$, alors $ \widetilde{ ( \frac{\alpha,L/K} { \mathfrak{p}}) }= (\widetilde{ \frac{L/K}{\mathfrak{p}}})^{-\mathfrak{a}}$ où $\psi(\alpha)= \mathfrak{p}^{\mathfrak{a}} a $ avec $a$ premier à $\mathfrak{
p}$. 
 
\noindent Il suit donc que si $\mathfrak{p} \not\vert  \widetilde{\mathfrak{f}_{L/K}} $ et $\mathfrak{p} \not \vert \psi(\alpha)$ (i.e $\mathfrak{a}=0$) alors $ \widetilde{ ( \frac{\alpha,L/K} { \mathfrak{p}})} =1$.
Ainsi, le symbole de Hasse logarithmique ne peut être différent de $1$, que pour les places logarithmiquement ramifiées dans $L/K$, ( donc divisant le conducteur logarithmique global $\widetilde{\mathfrak{f}_{L/K}}$ ) et les places divisant $\psi(\alpha).$ 

\noindent Soit $\alpha \in \mathcal{R}_{K}$, posons alors :
$$ \psi(\alpha)= \prod_{i=1}^{i=\rho} \mathfrak{p}_{i}^{\lambda_i} $$
\noindent avec dans $ \{ \mathfrak{p}_{1}, \mathfrak{p}_{2}, ..., \mathfrak{p}_{\rho} \}$  outre les diviseurs premiers
de $\psi(\alpha)$, les places finies logarithmiquement ramifiées. Enfin, notons  $\mathfrak{p}_{\rho+1}, ..., \mathfrak{p}_{R} $
les places à l'infini logarithmiquement ramifiées qui par convention sont les places à l'infini ramifiées au sens classique \cite{Ja2}. 

\noindent Pour $i=1...R$ considérons  $\beta_{i}$ un $\mathfrak{p}_{i}$-associé de $\alpha$ alors par définition :
$$ \frac{\beta_{i}}{\alpha} \in \mathcal{R}_{K}^{(\tilde{\mathfrak{f}}_{\mathfrak{p}_{i}})} \; \textrm{et} \; \beta_{i} \in \mathcal{R}_{K}^{(\frac{\tilde{\mathfrak{f}}_{L/K}} {\tilde{\mathfrak{f}}_{\mathfrak{p}_{i}}})} .$$
Nous posons $\psi(\beta_{i})=\mathfrak{p}_{i}^{\mathfrak{a}_{i}} a_{i}$ et  ainsi $ \widetilde{ ( \frac{\alpha,L/K} { \mathfrak{p}} )}=(\frac{L/K} {a_{i}})$, avec la convention $\mathfrak{a}_{i}=0$ pour les places à l'infini. Par hypothèse, 
$ \frac{\beta_{i}}{\alpha} \in \mathcal{R}_{K}^{(\tilde{\mathfrak{f}}_{\mathfrak{p}_{i}})}$, il en découle que 
$\psi(\frac{\beta_{i}}{\alpha})$ est premier à $\mathfrak{p}_{i}$. Or nous avons :
$$ \psi(\frac{\beta_{i}}{\alpha})= \frac {\mathfrak{p}_{i}^{\mathfrak{a}_{i}} a_i} {\prod_{j=1}^{\rho} \mathfrak{p}_{j}^{\lambda_{j}} }= \frac {\mathfrak{p}_{i}^{\mathfrak{a}_{i}} a_i} {\mathfrak{p}_{i}^{\lambda_{i}}
\prod_{j \ne i} \mathfrak{p}_{j}^{\lambda_{j}} }.$$
Nous en déduisons donc que pour tout $i \in [1,R] \;, \lambda_{i}=\mathfrak{a}_{i}$, et finalement nous obtenons :
$$ \psi(\alpha)= \prod_{i=1}^{\rho} \mathfrak{p}_{i}^{\mathfrak{a}_{i}}.$$
\noindent Il s'ensuit alors :
$$\prod_{i=1}^{R} \widetilde{ ( \frac{\alpha,L/K} { \mathfrak{p}_{i}} )}    = \prod_{i=1}^{R} (\widetilde{ \frac{L/K}{a_{i}}})=
(\widetilde{ \frac{L/K}{\prod_{i=1}^{R} a_{i}}})$$
\noindent d'après la propriété de multiplicativité du symbole d'Artin. 

\noindent Le but est de prouver que $\prod_{i=1}^{R} a_{i}$  est dans le noyau de l'application d'Artin logarithmique.

\noindent Or nous avons $ \prod_{i=1}^{R} \psi(\beta_{i})= \prod_{i=1}^{R} \mathfrak{p}_{i}^{\mathfrak{a}_{i}} \prod_{i=1}^{R} a_{i}.$ Comme $\mathfrak{a}_{i}=0$ pour les places à l'infini, il vient donc :
$$  \prod_{i=1}^{R} \psi(\beta_{i})= \psi(\alpha) \prod_{i=1}^{R} a_{i} \qquad \textrm{i.e} \qquad
\prod_{i=1}^{R} a_{i}= \frac{ \prod_{i=1}^{R} \psi(\beta_{i})} {\psi(\alpha)}= \psi(\frac{\prod_{i=1}^{R} \beta_{i}} {\alpha}) . $$

\noindent Soit alors $j \in \{1,..., R \}$, nous avons :
$$ \prod_{i \ne j}  \frac{\beta_{i}}{\alpha} \in \mathcal{R}_{K}^{(\tilde{\mathfrak{f}}_{\mathfrak{p}_{j}})} 
\; \textrm{et} \;   \frac{\beta_{j}}{\alpha} \in \mathcal{R}_{K}^{(\tilde{\mathfrak{f}}_{\mathfrak{p}_{j}})}. $$
Nous en déduisons donc :
$$ \prod_{i=1}^{R}   \frac{\beta_{i}}{\alpha}  \in \mathcal{R}_{K}^{(\tilde{\mathfrak{f}}_{\mathfrak{p}_{j}})} .$$
\noindent Les modules $\tilde{\mathfrak{f}}_{\mathfrak{p}_{j}}$ étant deux à deux premiers entre eux (chacun est une puissance de $\mathfrak{p}_{j}$), il vient :
$$  \prod_{i=1}^{R}   \frac{\beta_{i}}{\alpha}  \in \mathcal{R}_{K}^{(\tilde{\mathfrak{f}}_{L/K})} \Rightarrow 
\psi(\prod_{i=1}^{R}   \frac{\beta_{i}}{\alpha}) \in P\ell_{L/K}^{(\tilde{\mathfrak{f}}_{L/K})}.$$
Finalement, nous obtenons, compte-tenu de l'expression du sous-module d'Artin logarithmique :
$$   \prod_{i=1}^{R} \widetilde{ ( \frac{\alpha,L/K} { \mathfrak{p_{i}}} )}=1 $$
\noindent d'où le résultat puisqu'en dehors de ces places ce produit vaut déjà $1$.

\end{proof}

\begin{theorem}{Réciproque de la formule du produit}

\noindent Soit $L/K$ une $\ell$-extension abélienne finie, $S$ un ensemble fini de places de $K$, $(\sigma_{\mathfrak{p}})_{\mathfrak{p}} \in
\prod_{\mathfrak{p}} \mathcal{D}_{\mathfrak{p}}$, où $  \mathcal{D}_{\mathfrak{p}} $ désigne le sous-groupe de décomposition de $\mathfrak{p}$ dans $L/K$, tel que :

$$ \sigma_{\mathfrak{p}} =1  \textrm{ pour }  \mathfrak{p} \not \in S \; \textrm{et} \;
\prod_{\mathfrak{p}} \sigma_{\mathfrak{p}} =1  $$

\noindent alors il existe $\alpha \in \mathcal{R}_{K}$ vérifiant :
$$  \widetilde{ ( \frac{\alpha,L/K} { \mathfrak{p}} )}=\sigma_{\mathfrak{p}} $$
\noindent pour toute place $\mathfrak{p}$.

\end{theorem}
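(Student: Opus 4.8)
The plan is to realise the prescribed family $(\sigma_{\mathfrak{p}})_{\mathfrak{p}}$ as the collection of local components of a single principal idèle, using the fact that the symbol factors through the local component together with the global $\ell$-adic reciprocity law; the approximation lemma is then not even needed.

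First I would record that $\widetilde{(\frac{\alpha,L/K}{\mathfrak{p}})}$ depends only on the component $\alpha_{\mathfrak{p}}$. Indeed, if $\alpha,\alpha'\in\mathcal{R}_K$ satisfy $\alpha_{\mathfrak{p}}=\alpha'_{\mathfrak{p}}$, then $\alpha/\alpha'$ has trivial $\mathfrak{p}$-component, hence is a local norm at $\mathfrak{p}$, so by the characterisation of the kernel $\widetilde{(\frac{\alpha/\alpha',L/K}{\mathfrak{p}})}=1$ and the two symbols coincide. Combined with the theorem identifying the image with the decomposition group $\mathcal{D}_{\mathfrak{p}}$ and with the equality $X_{\mathfrak{p}}=\mathcal{R}_K\cap N_{L_{\mathfrak{P}}/K_{\mathfrak{p}}}\mathcal{R}_{L_{\mathfrak{P}}}$, the symbol descends to an isomorphism $\chi_{\mathfrak{p}}:\mathcal{R}_{K_{\mathfrak{p}}}/N_{L_{\mathfrak{P}}/K_{\mathfrak{p}}}\mathcal{R}_{L_{\mathfrak{P}}}\xrightarrow{\sim}\mathcal{D}_{\mathfrak{p}}$ satisfying $\widetilde{(\frac{\alpha,L/K}{\mathfrak{p}})}=\chi_{\mathfrak{p}}(\alpha_{\mathfrak{p}})$ for every $\alpha$. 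In particular $\chi_{\mathfrak{p}}$ is onto $\mathcal{D}_{\mathfrak{p}}$ and kills local norms.

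Next, for each $\mathfrak{p}\in S$ I would choose, by surjectivity of $\chi_{\mathfrak{p}}$, an element $\gamma_{\mathfrak{p}}\in\mathcal{R}_{K_{\mathfrak{p}}}$ with $\chi_{\mathfrak{p}}(\gamma_{\mathfrak{p}})=\sigma_{\mathfrak{p}}$, set $\gamma_{\mathfrak{p}}=1$ for $\mathfrak{p}\notin S$, and assemble the idèle $\mathbf{b}=(\gamma_{\mathfrak{p}})_{\mathfrak{p}}\in\mathcal{J}_K$, which has finite support. Its image under the global reciprocity map $\rho:\mathcal{C}_K=\mathcal{J}_K/\mathcal{R}_K\to\mathrm{Gal}(L/K)$, given place by place by $\mathbf{b}\mapsto\prod_{\mathfrak{p}}\chi_{\mathfrak{p}}(b_{\mathfrak{p}})$, equals $\prod_{\mathfrak{p}\in S}\sigma_{\mathfrak{p}}=1$ by hypothesis. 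The $\ell$-adic reciprocity law of \cite{Ja1} (whose triviality on $\mathcal{R}_K$ is precisely the product formula just proved) gives $\ker\rho=N_{L/K}\mathcal{C}_L$, i.e. $\ker(\mathcal{J}_K\to\mathrm{Gal}(L/K))=\mathcal{R}_K\cdot N_{L/K}\mathcal{J}_L$. Hence I may write $\mathbf{b}=\alpha\,N_{L/K}(\mathbf{z})$ with $\alpha\in\mathcal{R}_K$ and $\mathbf{z}\in\mathcal{J}_L$.

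This $\alpha$ is the desired element: at every place $\mathfrak{p}$ one has $\alpha_{\mathfrak{p}}=\gamma_{\mathfrak{p}}\,(N_{L/K}\mathbf{z})_{\mathfrak{p}}^{-1}$, and $(N_{L/K}\mathbf{z})_{\mathfrak{p}}=\prod_{\mathfrak{P}\mid\mathfrak{p}}N_{L_{\mathfrak{P}}/K_{\mathfrak{p}}}(z_{\mathfrak{P}})$ is a local norm, so it lies in $\ker\chi_{\mathfrak{p}}$; therefore $\widetilde{(\frac{\alpha,L/K}{\mathfrak{p}})}=\chi_{\mathfrak{p}}(\alpha_{\mathfrak{p}})=\chi_{\mathfrak{p}}(\gamma_{\mathfrak{p}})=\sigma_{\mathfrak{p}}$ for all $\mathfrak{p}$, including $\mathfrak{p}\notin S$ where both sides are $1$. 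The main obstacle is not the final assembly but the two structural inputs feeding it: the local reduction of the second paragraph, which must be extracted cleanly from the kernel characterisation so that $\chi_{\mathfrak{p}}$ is genuinely defined on $\mathcal{R}_{K_{\mathfrak{p}}}$ and compatible with the global symbol, and the global identification $\ker\rho=N_{L/K}\mathcal{C}_L$, the $\ell$-adic analogue of Artin reciprocity, which should be invoked from the $\ell$-adic class field theory of \cite{Ja1}. Once these are secured the construction of $\alpha$ is immediate.
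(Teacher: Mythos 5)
Your route is genuinely different from the paper's, and its outline is sound. The paper never leaves its divisor-theoretic framework: for each $\mathfrak{p}\in S$ it picks a global $\alpha_{\mathfrak{p}}$ realising $\sigma_{\mathfrak{p}}$ (image theorem), chooses refined $\mathfrak{p}$-associates $\beta_{\mathfrak{p}}$ relative to a common multiple $\mathfrak{m}$ of the conductor, uses the hypothesis $\prod_{\mathfrak{p}}\sigma_{\mathfrak{p}}=1$ to place $\prod_{\mathfrak{p}}b_{\mathfrak{p}}$ in the logarithmic Artin kernel $A\ell_{L/K,\mathfrak{m}}$ (via the Artin--Takagi equality), and then corrects by the resulting $u\in\mathcal{R}_{K}^{(\mathfrak{m})}$, setting $\alpha=u^{-1}\prod_{\mathfrak{p}}\beta_{\mathfrak{p}}$ and checking each place by hand. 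You instead lift $(\sigma_{\mathfrak{p}})_{\mathfrak{p}}$ to an idèle and let global $\ell$-adic class field theory produce the correcting principal element. Your first paragraph --- that the symbol depends only on the component $\alpha_{\mathfrak{p}}$, hence descends to a map $\chi_{\mathfrak{p}}$ on $\mathcal{R}_{K_{\mathfrak{p}}}/N_{L_{\mathfrak{P}}/K_{\mathfrak{p}}}\mathcal{R}_{L_{\mathfrak{P}}}$ --- is a correct and clean consequence of multiplicativity plus the kernel characterisation, and it is a structural fact the paper never isolates. The paper's proof buys self-containedness (only tools built in the paper and \cite{Re2}, \cite{Re3}); yours buys brevity and the conceptual statement that the logarithmic Hasse symbols are the local components of a global reciprocity map.

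Two points need repair, one minor and one substantive. Minor: your claim that the approximation lemma ``is not even needed'' is off; it is precisely what makes every element of $\mathcal{R}_{K_{\mathfrak{p}}}$ the $\mathfrak{p}$-component of some element of $\mathcal{R}_{K}$, hence what makes $\chi_{\mathfrak{p}}$ defined on all of $\mathcal{R}_{K_{\mathfrak{p}}}$ and surjective onto $\mathcal{D}_{\mathfrak{p}}$ --- it is simply hidden inside the kernel and image theorems you cite. Substantive: you apply the reciprocity law of \cite{Ja1} to \emph{your} map $\rho(\mathbf{b})=\prod_{\mathfrak{p}}\chi_{\mathfrak{p}}(b_{\mathfrak{p}})$, but the theorem of \cite{Ja1} identifies the kernel of \emph{its own} norm-residue map, and nothing in the present paper proves that the logarithmic Hasse symbol (defined via $\mathfrak{p}$-associates and the divisor Artin map) coincides with the local component of Jaulent's idèlic reciprocity map; in the classical theory this compatibility is itself a theorem, not a formality. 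The gap is fillable without that compatibility: you already know $\rho$ kills $\mathcal{R}_{K}$ (the product formula) and kills $N_{L/K}\mathcal{J}_{L}$ (the kernel characterisation), and $\rho$ is surjective because its image contains every $\mathcal{D}_{\mathfrak{p}}$, hence every logarithmic Frobenius, and the logarithmic Artin map is onto. Thus $\rho$ induces a surjection of $\mathcal{J}_{K}/\mathcal{R}_{K}N_{L/K}\mathcal{J}_{L}$ onto $\mathrm{Gal}(L/K)$, and by the norm index equality of \cite{Ja1} these are finite groups of the same order $[L:K]$, so the surjection is injective and $\ker\rho=\mathcal{R}_{K}N_{L/K}\mathcal{J}_{L}$, which is what your third paragraph requires. (You should also record why $\rho$ is well defined on all of $\mathcal{J}_{K}$: at almost every place the component is a logarithmic unit and the place is logarithmically unramified, so that component is a local norm and the factor is trivial.) With these adjustments your argument is complete.
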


\begin{proof}

\noindent Tout comme dans le cas classique, nous ne changeons pas le problème en supposant que toute place
logarithmiquement ramifiée est dans $S$, à condition de supposer  : $\sigma_{\mathfrak{p}}=1$ pour toute place
$\mathfrak{p}$ logarithmiquement ramifiée rajoutée. 

\noindent Notons alors $\tilde{\mathfrak{f}}_{L/K}$
le conducteur logarithmique global de l'extension et considérons $\mathfrak{m}$ un multiple de ce dernier, tel que toute place de $S$ figure dans $\mathfrak{m}$ et seulement
les places de $S$. Comme l'image du symbole de Hasse logarithmique est le sous-groupe de décomposition de la place
$\mathfrak{p}$, nous en déduisons :
$$ \forall \mathfrak{p} \in S \; \exists \alpha_{\mathfrak{p}} \in \mathcal{R}_{K} \; \textrm{tel que} \; \widetilde{ ( \frac{\alpha_{\mathfrak{p}},L/K} { \mathfrak{p}} )}=\sigma_{\mathfrak{p}}. $$

\noindent Désignons par $\mathfrak{m}_{\mathfrak{p}}$ la $\mathfrak{p}$-participation de $\mathfrak{p}$ dans $\mathfrak{m}$ alors nous avons :

$\tilde{\mathfrak{f}}_{L/K} \vert \mathfrak{m} \Rightarrow \mathcal{R}_{K}^{(\mathfrak{m})} \subset \mathcal{R}_{K}^{(\tilde{\mathfrak{f}}_{L/K})} \; \textrm{et} \; \tilde{\mathfrak{f}}_{\mathfrak{p}} \vert \mathfrak{m}_{\mathfrak{p}} \Rightarrow\frac{ \tilde{\mathfrak{f}_{L/K}} } { \tilde{\mathfrak{f}}_{\mathfrak{p}} }  \vert \frac{\mathfrak{m}} {\mathfrak{m}_{\mathfrak{p}}} \quad
\; \textrm{il s'ensuit } \; \mathcal{R}_{K}^{(\frac{\mathfrak{m}}{\mathfrak{m}_{\mathfrak{p}}} )} \subset \mathcal{R}_{K}^{( \frac{ \tilde{\mathfrak{f}_{L/K}} } { \tilde{\mathfrak{f}}_{\mathfrak{p}} } )}.$

\noindent Pour chaque $\mathfrak{p} \in S$, nous pouvons donc trouver un $\mathfrak{p}$-associé $\beta_{\mathfrak{p}}$
de $\alpha_{\mathfrak{p}}$ vérifiant : (les conditions imposées au niveau du $\mathfrak{p}$-associé sont un peu plus restrictives
que d'habitude)

$$ \frac{\beta_{\mathfrak{p}}} {\alpha_{\mathfrak{p}} }  \in \mathcal{R}_{K}^{(\mathfrak{m}_{\mathfrak{p}})} \subset \mathcal{R}_{K}^{(\tilde{f}_{\mathfrak{p}})}$$ et $$ \beta_{\mathfrak{p}} \in \mathcal{R}_{K}^{(\frac{\mathfrak{m}}{\mathfrak{m}_{\mathfrak{p}}} )}.$$

\noindent Nous posons alors $\psi(\beta_{\mathfrak{p}})= \mathfrak{p}^{\mathfrak{a}_{\mathfrak{p}}} b_{\mathfrak{p}}.$
Comme $\beta_{\mathfrak{p}} \in \mathcal{R}_{K}^{(\frac{\mathfrak{m}}{\mathfrak{m}_{\mathfrak{p}}})}$, nous en déduisons que $\psi(\beta_{\mathfrak{p}})$ est premier avec $\frac{\mathfrak{m}}{\mathfrak{m}_{\mathfrak{p}}}$.
Or $b_{\mathfrak{p}}$ est choisi premier à $\mathfrak{p}$, donc $b_{\mathfrak{p}}$ est premier à $\mathfrak{m}$ (
sinon $\mathfrak{m} \vert b_{\mathfrak{p}} $ et comme $b_{\mathfrak{p}} $ est premier à $\mathfrak{p}$, nous aurions
$\frac{\mathfrak{m}}{\mathfrak{m}_{\mathfrak{p}} } \vert \psi(\beta_{\mathfrak{p}})$. ) Il suit $b_{\mathfrak{p}} \in D\ell_{K}^{\mathfrak{m}}.$

\noindent Par hypothèse, nous avons $\prod_{\mathfrak{p} \in S} \sigma_{\mathfrak{p}} =1  $, d'où
$$    \prod_{\mathfrak{p} \in S} (\widetilde{ \frac{L/K}{b_{\mathfrak{p}}} })=1  \; \textrm{i.e} \;
(\widetilde{ \frac{L/K}{\prod_{\mathfrak{p} \in S} b_{\mathfrak{p}}} })=1 .$$

\noindent Il s'ensuit alors $\prod_{\mathfrak{p} \in S} b_{\mathfrak{p}} \in T\ell_{L/K}$ et comme
$\prod_{\mathfrak{p} \in S} b_{\mathfrak{p}} \in D\ell_{K}^{\mathfrak{m}}$ ; nous en déduisons finalement :
$\prod_{\mathfrak{p} \in S} b_{\mathfrak{p}}  \in A\ell_{L/K,\mathfrak{m}}$. Par suite, il existe $u \in \mathcal{R}_{K}^{(\mathfrak{m})}$ et
$\mathcal{U} \in D\ell_{L}^{\mathfrak{m}}$ tels que 
$$   \prod_{\mathfrak{p} \in S} b_{\mathfrak{p}} = \psi(u) N_{L/K}(\mathcal{U}) .$$

\noindent Posons alors $\alpha= u^{-1} \prod_{\mathfrak{p} \in S} \beta_{\mathfrak{p}} $, ainsi il vient :

$$ \psi(\alpha)= \psi(u^{-1}) \prod_{\mathfrak{p} \in S} \psi(\beta_{\mathfrak{p}}) =
\psi(u^{-1}) \prod_{\mathfrak{p} \in S} \mathfrak{p}^{\mathfrak{a}_{\mathfrak{p}} } \prod_{\mathfrak{p} \in S} b_{\mathfrak{p}} =
 \prod_{\mathfrak{p} \in S} \mathfrak{p}^{\mathfrak{a}_{\mathfrak{p}} } \;  N_{L/K}(\mathcal{U}).$$
\smallskip

\noindent Nous cherchons alors à exprimer $\widetilde{(\frac{\alpha, L/K}{\mathfrak{q}} })$ pour toute place $\mathfrak{q}$ de $K$.
\bigskip

\noindent -Si $\mathfrak{q} \in S$, $\beta_{\mathfrak{q}}$ est alors un $\mathfrak{q}$-associé de $\alpha$.

\noindent En effet, par construction nous avons d'une part :  $\beta_{\mathfrak{q}} \in \mathcal{R}_{K}^{( \frac{ \tilde{\mathfrak{f}_{L/K}} } { \tilde{\mathfrak{f}}_{\mathfrak{q}} } )}$. Et d'autre part, nous obtenons 
$ \frac{\alpha}{\beta_{\mathfrak{q}} } = u^{-1} \prod_{\mathfrak{p} \in S \; \mathfrak{p} \ne \mathfrak{q} } \beta_{\mathfrak{p}}. $
\noindent Or nous savons  $  \beta_{\mathfrak{p}} \in \mathcal{R}_{K}^{(\frac{\mathfrak{m}}{\mathfrak{m}_{\mathfrak{p}}})}$ pour chaque $\mathfrak{p}$ donc $\prod_{\mathfrak{p} \in S \; \mathfrak{p} \ne \mathfrak{q} } \beta_{\mathfrak{p}} \in \mathcal{R}_{K}^{(\mathfrak{m}_{\mathfrak{q}})}$
et $u \in \mathcal{R}_{K}^{(\mathfrak{m})} \subset \mathcal{R}_{K}^{(\mathfrak{m}_{\mathfrak{q}})} \; \textrm{car} \; \mathfrak{m}_{\mathfrak{q}} \vert \mathfrak{m}.$ 
Ainsi il découle $\frac{\alpha}{\beta_{\mathfrak{q}} }  \in \mathcal{R}_{K}^{(\tilde{\mathfrak{f}}_{\mathfrak{q}})}$. 

\noindent Finalement, nous obtenons $ \widetilde{(\frac{\alpha, L/K} {\mathfrak{q}} )}=(\widetilde{\frac{L/K}{b_{\mathfrak{q}}} } )= \sigma_{\mathfrak{q}}$.

\medskip

\noindent -Si $\mathfrak{q} \not \in S$, alors nous avons fait en sorte que $\mathfrak{q}$ soit logarithmiquement non ramifiée 
i.e $\mathfrak{q} \not \vert \tilde{\mathfrak{f}}_{L/K}$. D'après la proposition 2.1.1 ii), le symbole de Hasse ne dépend alors que de la $\mathfrak{q}$-valuation de $\psi(\alpha)$. Or celle-ci ne dépend que de la $\mathfrak{q}$-valuation de $N_{L/K}(\mathcal{U})$ car $\mathfrak{q} \not \in S$ : elle est donc divisible par le degré d'inertie logarithmique $\tilde{\mathfrak{f}_{\mathfrak{q}} }$. Donc le symbole de Hasse logarithmique vaut $1$, d'où le résultat.

\end{proof}

\begin{cor}
Si pour tout $\mathfrak{p} \in S$, $\sigma_{\mathfrak{p}}$ appartient au sous-groupe d'inertie logarithmique $\tilde{\Gamma}_{\mathfrak{p}}$ alors via le théorème 2.1.2 précédent, nous pouvons prendre $\alpha_{\mathfrak{p}}$ premier à $\mathfrak{p}$ : ainsi les $ \beta_{\mathfrak{p}}$ seront premiers à $m$ donc au conducteur logarithmique
global de l'extension. Ainsi $\alpha$ est premier au conducteur logarithmique global.

\end{cor}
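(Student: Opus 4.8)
Le plan est de reprendre la construction menée dans la preuve de la réciproque (Théorème 2.2.2) en suivant cette fois, place par place, la divisibilité par $\mathfrak{p}$ du diviseur logarithmique ; le seul ingrédient nouveau est l'énoncé de surjectivité plus fin fourni par le Théorème 2.1.2 lorsque $\sigma_{\mathfrak{p}}$ est dans le sous-groupe d'inertie logarithmique $\tilde{\Gamma}_{\mathfrak{p}}$. D'abord, pour chaque $\mathfrak{p} \in S$, l'hypothèse $\sigma_{\mathfrak{p}} \in \tilde{\Gamma}_{\mathfrak{p}}$ et le Théorème 2.1.2 (l'image de la restriction du symbole aux idèles principaux dont le diviseur logarithmique est premier à $\mathfrak{p}$ est précisément le sous-groupe d'inertie) me donnent un $\alpha_{\mathfrak{p}} \in \mathcal{R}_{K}$ avec $\psi(\alpha_{\mathfrak{p}})$ premier à $\mathfrak{p}$ et $\widetilde{(\frac{\alpha_{\mathfrak{p}},L/K}{\mathfrak{p}})}=\sigma_{\mathfrak{p}}$. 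C'est le point-clé : dans la preuve du Théorème 2.2.2 on se contentait d'un $\alpha_{\mathfrak{p}}$ quelconque réalisant $\sigma_{\mathfrak{p}}$ (via la surjectivité sur le groupe de décomposition), alors qu'ici on peut l'imposer premier à $\mathfrak{p}$.

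Ensuite, je choisis le $\mathfrak{p}$-associé $\beta_{\mathfrak{p}}$ exactement comme dans la réciproque, soit $\frac{\beta_{\mathfrak{p}}}{\alpha_{\mathfrak{p}}} \in \mathcal{R}_{K}^{(\mathfrak{m}_{\mathfrak{p}})}$ et $\beta_{\mathfrak{p}} \in \mathcal{R}_{K}^{(\frac{\mathfrak{m}}{\mathfrak{m}_{\mathfrak{p}}})}$. Comme $\mathfrak{m}_{\mathfrak{p}}$ est une puissance de $\mathfrak{p}$, la $\mathfrak{p}$-composante de $\frac{\beta_{\mathfrak{p}}}{\alpha_{\mathfrak{p}}}$ est une unité logarithmique ; la valuation $\tilde{v}_{\mathfrak{p}}$ y est donc nulle et $\psi(\frac{\beta_{\mathfrak{p}}}{\alpha_{\mathfrak{p}}})$ est premier à $\mathfrak{p}$. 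Joint à $\psi(\alpha_{\mathfrak{p}})$ premier à $\mathfrak{p}$, ceci force $\psi(\beta_{\mathfrak{p}})$ premier à $\mathfrak{p}$, autrement dit $\mathfrak{a}_{\mathfrak{p}}=0$ et $\psi(\beta_{\mathfrak{p}})=b_{\mathfrak{p}}$. Or la preuve du Théorème 2.2.2 établit déjà $b_{\mathfrak{p}} \in D\ell_{K}^{\mathfrak{m}}$, c'est-à-dire premier à $\mathfrak{m}$ : par conséquent $\beta_{\mathfrak{p}}$ lui-même est premier à $\mathfrak{m}$, donc au conducteur logarithmique global puisque $\tilde{\mathfrak{f}}_{L/K} \vert \mathfrak{m}$.

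Enfin, je forme $\alpha = u^{-1}\prod_{\mathfrak{p}\in S}\beta_{\mathfrak{p}}$ avec $u \in \mathcal{R}_{K}^{(\mathfrak{m})}$, comme dans la réciproque : la relation $\prod_{\mathfrak{p}\in S} b_{\mathfrak{p}} = \psi(u)\,N_{L/K}(\mathcal{U})$ et la vérification $\widetilde{(\frac{\alpha,L/K}{\mathfrak{q}})}=\sigma_{\mathfrak{q}}$ pour toute place $\mathfrak{q}$ y restent valables sans changement. Comme $\psi(u)$ est premier à $\mathfrak{m}$ et chaque $\psi(\beta_{\mathfrak{p}})$ l'est aussi, le diviseur $\psi(\alpha)$ est premier à $\mathfrak{m}$, donc à $\tilde{\mathfrak{f}}_{L/K}$ ; de façon équivalente, dans l'écriture $\psi(\alpha)=\prod_{\mathfrak{p}\in S}\mathfrak{p}^{\mathfrak{a}_{\mathfrak{p}}} N_{L/K}(\mathcal{U})$ tous les $\mathfrak{a}_{\mathfrak{p}}$ sont nuls et $N_{L/K}(\mathcal{U})$ est premier à $\mathfrak{m}$ (car $\mathcal{U}\in D\ell_{L}^{\mathfrak{m}}$). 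Ainsi $\alpha$ est premier au conducteur logarithmique global, ce qui conclut. Le seul point réellement délicat est de s'assurer que le passage au $\mathfrak{p}$-associé ne réintroduit pas $\mathfrak{p}$ dans le diviseur, et il est entièrement réglé par l'observation que $\mathcal{R}_{K}^{(\mathfrak{m}_{\mathfrak{p}})}$ est formé d'idèles de $\mathfrak{p}$-composante logarithmiquement unité, ce qui annule $\tilde{v}_{\mathfrak{p}}$ ; tout le reste n'est que la relecture de la preuve du Théorème 2.2.2 avec $\mathfrak{a}_{\mathfrak{p}}=0$.
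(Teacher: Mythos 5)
Your proof is correct and takes essentially the same approach as the paper: the corollary is stated there with its proof sketch built in (invoke Theorem 2.1.2 to choose $\alpha_{\mathfrak{p}}$ prime to $\mathfrak{p}$, deduce that the $\beta_{\mathfrak{p}}$ are prime to $\mathfrak{m}$ and hence to $\tilde{\mathfrak{f}}_{L/K}$, and conclude for $\alpha$). You merely fill in the routine details — $\mathfrak{a}_{\mathfrak{p}}=0$ because the $\mathfrak{p}$-component of $\beta_{\mathfrak{p}}/\alpha_{\mathfrak{p}}$ lies in $\widetilde{\mathcal{U}}_{K_{\mathfrak{p}}}^{v_{\mathfrak{p}}(\mathfrak{m}_{\mathfrak{p}})}$, and $\psi(\alpha)=\psi(u)^{-1}\prod_{\mathfrak{p}\in S}\psi(\beta_{\mathfrak{p}})$ is then prime to $\mathfrak{m}$ — which is exactly what the paper leaves implicit.
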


\bigskip

\section{Interprétation arithmétique du sous-groupe de défaut du principe de Hasse $\ell$-adique}
\bigskip

\subsection{Le théorème et sa preuve}
\noindent Cette section est consacrée à la preuve du théorème suivant :

\begin{theorem}

\noindent Soit $L/K$ une $\ell$-extension abélienne finie, 

\noindent $\tilde{\mathcal{C}}\ell_{L} $ (respectivement $\tilde{\mathcal{C}}\ell_{K} $  ) le groupe des classes logarithmiques de degré nul de $L$ (respectivement $K$) défini par Jaulent \cite{Ja2} comme le quotient du groupe des diviseurs logarithmiques de degré nul par les diviseurs logarithmiques principaux,  

\noindent $\tilde{\mathcal{C}}\ell_{L}^{*}$  le noyau de l'application norme $N_{L/K} : \tilde{\mathcal{C}}\ell_{L} 
\longrightarrow N_{L/K} \tilde{\mathcal{C}}\ell_{L}  $ 

\noindent $\Delta_{L/K}$ l'idéal d'augmentation du groupe de Galois de $L/K$. 

\noindent Nous avons alors :

$$ | \hat{\Gamma}_{L/K}|  (\mathcal{N}_{L/K} :  N_{L/K}\mathcal{R}_{L} ) =(\tilde{\mathcal{C}\ell}_{L}^{*}: \tilde{\mathcal{C}\ell}_{L}^{\Delta_{L/K}}) 
(\tilde{\mathcal{E}}_{K}: \tilde{\mathcal{E}}_{K} \cap  N_{L/K}\mathcal{R}_{L})$$

\end{theorem}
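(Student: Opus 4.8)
L'idée est d'interpréter le groupe de défaut comme un noyau cohomologique, puis de le relier au groupe des classes logarithmiques en quotientant par les unités logarithmiques. \textbf{Première étape (le défaut comme noyau).} D'après la caractérisation du noyau du symbole de Hasse logarithmique, un idèle $\alpha\in\mathcal{R}_{K}$ est partout norme locale si et seulement si tous ses symboles $\widetilde{(\frac{\alpha,L/K}{\mathfrak{p}})}$ sont triviaux ; autrement dit $\mathcal{N}_{L/K}=\mathcal{R}_{K}\cap N_{L/K}\mathcal{J}_{L}$. Nous partirons de la suite exacte de $G$-modules, $G=\mathrm{Gal}(L/K)$,
$$ 1 \longrightarrow \mathcal{R}_{L} \longrightarrow \mathcal{J}_{L} \longrightarrow \mathcal{C}_{L} \longrightarrow 1, $$
et de sa suite exacte longue en cohomologie de Tate. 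Comme $\hat{H}^{0}(G,\mathcal{R}_{L})=\mathcal{R}_{K}/N_{L/K}\mathcal{R}_{L}$ et $\hat{H}^{0}(G,\mathcal{J}_{L})=\bigoplus_{\mathfrak{p}}\mathcal{R}_{K_{\mathfrak{p}}}/N_{L_{\mathfrak{P}}/K_{\mathfrak{p}}}\mathcal{R}_{L_{\mathfrak{P}}}$, le noyau de la flèche $\hat{H}^{0}(G,\mathcal{R}_{L})\to\hat{H}^{0}(G,\mathcal{J}_{L})$ est exactement le groupe de défaut $\mathcal{N}_{L/K}/N_{L/K}\mathcal{R}_{L}$. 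La formule du produit et sa réciproque assurent que ce noyau coïncide avec l'image du cobord issu de $\hat{H}^{-1}(G,\mathcal{C}_{L})$, ce qui ramène le calcul à la cohomologie du groupe des classes d'idèles.

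\medskip
\noindent\textbf{Deuxième étape (passage aux classes logarithmiques).} Le groupe $\tilde{\mathcal{C}\ell}_{L}$ s'obtient à partir des classes d'idèles de degré nul $\mathcal{C}_{L}^{0}$ en quotientant par l'image des unités logarithmiques : nous disposons de la suite exacte de $G$-modules
$$ 1 \longrightarrow \widetilde{\mathcal{U}}_{L}/\widetilde{\mathcal{E}}_{L} \longrightarrow \mathcal{C}_{L}^{0} \longrightarrow \tilde{\mathcal{C}\ell}_{L} \longrightarrow 1, $$
où $\widetilde{\mathcal{E}}_{L}=\mathcal{R}_{L}\cap\widetilde{\mathcal{U}}_{L}$. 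Sa suite exacte longue fait apparaître $\hat{H}^{-1}(G,\tilde{\mathcal{C}\ell}_{L})$, dont l'ordre vaut précisément $(\tilde{\mathcal{C}\ell}_{L}^{*}:\tilde{\mathcal{C}\ell}_{L}^{\Delta_{L/K}})$, puisque $\hat{H}^{-1}(G,M)={}_{N}M/\Delta_{L/K}M$ pour tout $G$-module $M$ et que $\tilde{\mathcal{C}\ell}_{L}^{*}={}_{N}\tilde{\mathcal{C}\ell}_{L}$. Le terme local $\hat{H}^{0}(G,\widetilde{\mathcal{U}}_{L})=\bigoplus_{\mathfrak{p}}\widetilde{\mathcal{U}}_{K_{\mathfrak{p}}}/N_{L_{\mathfrak{P}}/K_{\mathfrak{p}}}\widetilde{\mathcal{U}}_{L_{\mathfrak{P}}}$ se calcule place par place : la valuation logarithmique donne $(\widetilde{\mathcal{U}}_{K_{\mathfrak{p}}}:N_{L_{\mathfrak{P}}/K_{\mathfrak{p}}}\widetilde{\mathcal{U}}_{L_{\mathfrak{P}}})=\tilde{e}_{\mathfrak{p}}=|\widetilde{\Gamma}_{L/K,\mathfrak{p}}|$, de sorte que le produit sur les places logarithmiquement ramifiées vaut $|\hat{\Gamma}_{L/K}|$.

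\medskip
\noindent\textbf{Troisième étape (assemblage).} Il restera à recoller les suites exactes longues des deux étapes par le lemme du serpent. Le facteur $(\widetilde{\mathcal{E}}_{K}:\widetilde{\mathcal{E}}_{K}\cap N_{L/K}\mathcal{R}_{L})$ est l'ordre de l'image de $\widetilde{\mathcal{E}}_{K}$ dans $\mathcal{R}_{K}/N_{L/K}\mathcal{R}_{L}$, et provient de la comparaison entre les unités logarithmiques globales de $K$ et leurs normes. La multiplicativité des indices dans ces suites exactes livrera alors l'égalité annoncée : la cohomologie de $\tilde{\mathcal{C}\ell}_{L}$ fournit le facteur $(\tilde{\mathcal{C}\ell}_{L}^{*}:\tilde{\mathcal{C}\ell}_{L}^{\Delta_{L/K}})$, les contributions locales des unités le facteur $|\hat{\Gamma}_{L/K}|$, et les unités globales le dernier facteur.

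\medskip
\noindent\textbf{Principale difficulté.} Le point délicat sera la gestion des conditions de degré nul et de la finitude : il faudra vérifier que les parties infinies (les $\mathbb{Z}_{\ell}$-modules libres issus des valuations et du degré) se simplifient exactement, que les quotients d'Herbrand en jeu sont finis, et surtout que les flèches de liaison des deux suites exactes se recollent de façon à placer $|\hat{\Gamma}_{L/K}|$ du bon côté de l'égalité. C'est l'interaction précise entre le cobord de la suite des idèles et celui de la suite des unités logarithmiques qui constituera le c{\oe}ur de la preuve ; une variante plus élémentaire est d'ailleurs envisageable en travaillant directement avec le module $\mathcal{A}_{L/K}$ et le sous-module de Takagi logarithmique $T\ell_{L/K}$, au prix d'un contrôle minutieux des diviseurs premiers au conducteur.
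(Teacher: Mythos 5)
Votre stratégie cohomologique contient une erreur concrète qui en invalide l'étape décisive. Vous affirmez que, les indices locaux valant $(\widetilde{\mathcal{U}}_{K_{\mathfrak{p}}}:N_{L_{\mathfrak{P}}/K_{\mathfrak{p}}}\widetilde{\mathcal{U}}_{L_{\mathfrak{P}}})=\tilde{e}_{\mathfrak{p}}=|\widetilde{\Gamma}_{L/K,\mathfrak{p}}|$, leur produit sur les places logarithmiquement ramifiées vaut $|\hat{\Gamma}_{L/K}|$. C'est faux : $\hat{\Gamma}_{L/K}$ n'est pas le produit $\widetilde{\Gamma}_{L/K}=\prod_{\mathfrak{p}\vert\tilde{\mathfrak{f}}_{L/K}}\widetilde{\Gamma}_{L/K,\mathfrak{p}}$ tout entier, mais le noyau de l'application produit $\widetilde{\Gamma}_{L/K}\longrightarrow\mathrm{Gal}(L/K)$, $(\sigma_{\mathfrak{p}})\mapsto\prod_{\mathfrak{p}}\sigma_{\mathfrak{p}}$ ; son ordre est donc $\prod_{\mathfrak{p}}\tilde{e}_{\mathfrak{p}}$ divisé par l'ordre du sous-groupe de $\mathrm{Gal}(L/K)$ engendré par les inerties logarithmiques. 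L'exemple même du texte le montre : pour $L=\mathbb{Q}(i,\sqrt{7})$ on a $\hat{e}_{L/\mathbb{Q}}=2$, alors que le produit des ordres des sous-groupes d'inertie (en $2$, en $7$ et à la place infinie) vaut $8$. La contrainte globale $\prod_{\mathfrak{p}}\sigma_{\mathfrak{p}}=1$ qui découpe $\hat{\Gamma}_{L/K}$ dans $\widetilde{\Gamma}_{L/K}$ ne peut pas sortir d'un calcul local, place par place, de $\hat{H}^{0}(G,\widetilde{\mathcal{U}}_{L})$ : dans le texte elle provient de la formule du produit pour le symbole de Hasse logarithmique et surtout de sa réciproque (théorème 2.2.2), lesquelles fournissent la suite exacte $1\to\mathcal{N}_{L/K,\tilde{\mathfrak{f}}_{L/K}}\to\mathcal{A}_{L/K}\to\hat{\Gamma}_{L/K}\to 1$. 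Or c'est précisément cette interaction entre cobords que vous reportez à la troisième étape sans la mener : en l'état, votre esquisse produirait le facteur $\prod_{\mathfrak{p}}\tilde{e}_{\mathfrak{p}}$ et non $|\hat{\Gamma}_{L/K}|$, et le point dur de la démonstration reste entier.

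Un second écart, plus subtil, porte sur l'identification $\hat{H}^{-1}(G,\tilde{\mathcal{C}\ell}_{L})=\tilde{\mathcal{C}\ell}_{L}^{*}/\tilde{\mathcal{C}\ell}_{L}^{\Delta_{L/K}}$. La cohomologie de Tate fait intervenir le noyau de la norme \emph{algébrique} $\nu=\sum_{\sigma\in G}\sigma$, tandis que le $\tilde{\mathcal{C}\ell}_{L}^{*}$ de l'énoncé est le noyau de la norme \emph{arithmétique} $N_{L/K}:\tilde{\mathcal{C}\ell}_{L}\to\tilde{\mathcal{C}\ell}_{K}$. Comme $\nu=\tilde{j}\circ N_{L/K}$, où $\tilde{j}$ est le morphisme d'extension des classes, ces deux noyaux diffèrent dès que $\ker\tilde{j}\cap N_{L/K}\tilde{\mathcal{C}\ell}_{L}$ est non trivial ; et la non-trivialité de la capitulation logarithmique est justement l'objet du corollaire final du texte (version logarithmique du théorème de l'idéal principal), si bien que votre deuxième étape calcule en général une autre quantité que le facteur voulu. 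Notez enfin que la preuve du texte n'est pas cohomologique : elle travaille avec le module explicite $\mathcal{A}_{L/K}=\{\alpha\in\mathcal{R}_{K}\ :\ \psi(\alpha)\in N_{L/K}(D\ell_{L}^{\tilde{\mathfrak{f}}_{L/K}})\}$, en tire les deux suites exactes $1\to\mathcal{N}_{L/K,\tilde{\mathfrak{f}}_{L/K}}\to\mathcal{A}_{L/K}\to\hat{\Gamma}_{L/K}\to 1$ et $1\to\tilde{\mathcal{E}}_{K}N_{L/K}\Lambda_{\tilde{\mathfrak{f}}_{L/K}}\to\mathcal{A}_{L/K}\to\tilde{\mathcal{C}\ell}_{L}^{*}/\tilde{\mathcal{C}\ell}_{L}^{\Delta_{L/K}}\to 1$, puis conclut par un calcul d'indices et la remarque II ; cette voie évite en outre les questions de finitude (conjecture de Gross) que votre recours aux quotients de Herbrand soulèverait.
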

\medskip

\subsubsection{\'Etapes intermédiaires}

\noindent \textit{Remarque I :}

\noindent Soit $L/K$ une $\ell$-extension abélienne finie, $\alpha \in \mathcal{R}_{K}$ étant supposé partout (i.e pour toute place de $K$) norme locale dans $L/K$. 

\noindent Si $\psi(\alpha) = \mathfrak{p}^{\mathfrak{a}} a $ avec $\mathfrak{p} \not\vert a$ et $\mathfrak{a} \in \mathbb{Z}_{\ell}$,
alors $\mathcal{R}_{K}$ étant plongé dans $\mathcal{J}_{K}$ par l'injection diagonale,
 nous en déduisons localement  : 
$$\alpha= \tilde{\pi}_{\mathfrak{p}}^{\mathfrak{a}} u$$
\noindent avec $ \tilde{\pi}_{\mathfrak{p}}$ l'uniformisante logarithmique pour $\mathfrak{p}$ et $u$ une unité logarithmique, et $\tilde{v}_{\mathfrak{p}}(\alpha)=\mathfrak{a}$.

\noindent Or par hypothèse, $\alpha$ est supposé norme locale partout, nous avons donc aussi :
$$\alpha=N_{L_{\mathfrak{P}}/K_{\mathfrak{p}}} ( \tilde{\pi}_{\mathfrak{P}}^{\lambda}  w) $$
pour $\mathfrak{P} \vert \mathfrak{p}$, avec $ \tilde{\pi}_{\mathfrak{P}}$ uniformisante logarithmique dans $L_{\mathfrak{P}}$ et $w$ une unité logarithmique
dans $L_{\mathfrak{P}}$. Or $N_{L_{\mathfrak{P}}/K_{\mathfrak{p}}} ( \tilde{\pi}_{\mathfrak{P}})$
est de la forme $ {\tilde{\pi}_{\mathfrak{p}}^{\tilde{f}_{\mathfrak{p}}} } v$ avec $v$ une unité logarithmique de $K_{\mathfrak{p}}$, il en résulte donc : $\mathfrak{a}= \lambda \tilde{f}_{\mathfrak{p}}$, ainsi 
$\mathfrak{p}^{\mathfrak{a}}= \mathfrak{p}^{ \lambda \tilde{f}_{\mathfrak{p}} }= N_{L/K}(\mathfrak{P}^{\lambda})$,
à savoir $\mathfrak{p}^{a}$ est une norme globale de sorte que $\psi(\alpha)$ est bien une norme globale i.e il existe $\mathfrak{U} \in D\ell_{L/K}$ tel que $$\psi(\alpha)=N_{L/K}(\mathfrak{U}).$$

\noindent Nous pouvons même modifier $\alpha$ modulo $N_{L/K}(x)$ pour $x \in \mathcal{R}_{L}$ de sorte que 
$$ \psi(\alpha N_{L/K}(x))=N_{L/K}(\mathfrak{B}) $$
\noindent avec $\mathfrak{B} \in D\ell_{L}^{\mathfrak{m}}$ pour tout module $\mathfrak{m}$.

\smallskip

\begin{lemma}
$L/K$ étant une $\ell$-extension abélienne finie, $\mathfrak{m}$ un module donné de $K$, $\mathfrak{U} \in D\ell_{L}$ (respectivement $x \in L$) tel que $N_{L/K}(\mathfrak{U})$ soit premier à $\mathfrak{m}$ (respectivement $N_{L/K}(x)$ premier à $\mathfrak{m}$) alors il existe un diviseur logarithmique $\mathfrak{B}$ de $L$  (respectivement $y \in L$ )  premier à $\mathfrak{m}$ tel que
$N_{L/K}(\mathfrak{B})=N_{L/K}(\mathfrak{U})$ (respectivement $N_{L/K}(y)=N_{L/K}(x)$).

\end{lemma}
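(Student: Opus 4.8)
La preuve se scinde en la partie diviseur et la partie élément, la première étant immédiate et la seconde constituant la vraie difficulté.

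Pour la partie diviseur, l'idée est de décomposer $\mathfrak{U}$ selon son support. \'Ecrivons $\mathfrak{U}=\mathfrak{U}_0\,\mathfrak{U}_1$, où $\mathfrak{U}_1$ est la partie de $\mathfrak{U}$ première à $\mathfrak{m}$ et $\mathfrak{U}_0=\prod_{\mathfrak{P}\mid\mathfrak{m}}\mathfrak{P}^{n_{\mathfrak{P}}}$ la partie portée par les places de $L$ au-dessus de $\mathfrak{m}$. Comme la norme d'un diviseur porté par les $\mathfrak{P}\mid\mathfrak{q}$ est une puissance de $\mathfrak{q}$, le diviseur $N_{L/K}(\mathfrak{U}_0)$ est porté par les places divisant $\mathfrak{m}$ tandis que $N_{L/K}(\mathfrak{U}_1)$ est premier à $\mathfrak{m}$. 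L'hypothèse que $N_{L/K}(\mathfrak{U})$ est premier à $\mathfrak{m}$ force alors $N_{L/K}(\mathfrak{U}_0)=1$, c'est-à-dire $\sum_{\mathfrak{P}\mid\mathfrak{q}}\tilde{f}_{\mathfrak{P}}\,n_{\mathfrak{P}}=0$ pour chaque $\mathfrak{q}\mid\mathfrak{m}$, puisque $\mathbb{Z}_{\ell}$ est intègre. Il suffit donc de poser $\mathfrak{B}=\mathfrak{U}_1$ : ce diviseur est premier à $\mathfrak{m}$ et vérifie $N_{L/K}(\mathfrak{B})=N_{L/K}(\mathfrak{U})$.

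Pour la partie élément, posons $n_{\mathfrak{P}}=\tilde{v}_{\mathfrak{P}}(x)$ pour $\mathfrak{P}\mid\mathfrak{m}$. Le même argument appliqué à $\psi(x)$ montre que $\sum_{\mathfrak{P}\mid\mathfrak{q}}\tilde{f}_{\mathfrak{P}}\,n_{\mathfrak{P}}=0$ pour chaque $\mathfrak{q}\mid\mathfrak{m}$; comme les places au-dessus de $\mathfrak{q}$ sont conjuguées dans l'extension abélienne $L/K$, elles partagent le même degré d'inertie logarithmique $\tilde{f}_{\mathfrak{q}}$, et cette condition se réécrit $\sum_{\mathfrak{P}\mid\mathfrak{q}}n_{\mathfrak{P}}=0$. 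Chercher $y$ premier à $\mathfrak{m}$ avec $N_{L/K}(y)=N_{L/K}(x)$ revient alors à chercher $z=y/x$ dans le noyau de $N_{L/K}$ tel que $\tilde{v}_{\mathfrak{P}}(z)=-n_{\mathfrak{P}}$ pour tout $\mathfrak{P}\mid\mathfrak{m}$. Autrement dit, il faut réaliser le vecteur $(-n_{\mathfrak{P}})_{\mathfrak{P}\mid\mathfrak{m}}$, qui appartient au sous-module $W=\{(a_{\mathfrak{P}}) : \sum_{\mathfrak{P}\mid\mathfrak{q}}a_{\mathfrak{P}}=0\ \forall\,\mathfrak{q}\mid\mathfrak{m}\}$, par les valuations logarithmiques d'un élément de $\ker N_{L/K}$.

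Le point clef est que les éléments de la forme $s^{1-\sigma}=s/\sigma(s)$, pour $s\in\mathcal{R}_{L}$ et $\sigma\in\mathrm{Gal}(L/K)$, sont automatiquement de norme $1$, car $N_{L/K}(\sigma s)=N_{L/K}(s)$. Comme $W$ est engendré sur $\mathbb{Z}_{\ell}$ par les différences $e_{\mathfrak{P}_0}-e_{\mathfrak{P}_1}$ avec $\mathfrak{P}_0,\mathfrak{P}_1\mid\mathfrak{q}$, il suffit de réaliser une telle différence. Le groupe de Galois agissant transitivement sur les places au-dessus de $\mathfrak{q}$, on choisit $\sigma$ avec $\sigma\mathfrak{P}_0=\mathfrak{P}_1$; comme $\mathfrak{P}_1\neq\mathfrak{P}_0$, on a $\sigma\notin D_{\mathfrak{P}_0}$, donc $\sigma^{-1}\mathfrak{P}_0\neq\mathfrak{P}_0$. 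Par le lemme d'approximation $\ell$-adique, on prend $s\in\mathcal{R}_{L}$ de composantes locales une uniformisante logarithmique en $\mathfrak{P}_0$ et $1$ aux autres places au-dessus de $\mathfrak{m}$; la formule d'équivariance $\tilde{v}_{\mathfrak{P}}(\sigma s)=\tilde{v}_{\sigma^{-1}\mathfrak{P}}(s)$ donne alors $\tilde{v}_{\mathfrak{P}_0}(s^{1-\sigma})=1$, $\tilde{v}_{\mathfrak{P}_1}(s^{1-\sigma})=-1$ et $\tilde{v}_{\mathfrak{P}}(s^{1-\sigma})=0$ aux autres places au-dessus de $\mathfrak{m}$. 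En élevant ces générateurs à des puissances convenables de $\mathbb{Z}_{\ell}$ et en les multipliant, on obtient $z\in\ker N_{L/K}$ réalisant $(-n_{\mathfrak{P}})_{\mathfrak{P}\mid\mathfrak{m}}$, et $y=xz$ convient. L'obstacle principal réside précisément dans cette partie élément : imposer à la fois la norme exacte et la primalité à $\mathfrak{m}$, ce que la construction galoisienne $s^{1-\sigma}$ résout en fournissant des éléments de norme $1$ aux valuations contrôlées, là où une simple approximation ne maîtriserait pas la norme globale.
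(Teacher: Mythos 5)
Votre preuve est correcte, mais il faut d'abord préciser qu'il n'y a rien dans l'article à quoi la comparer : le lemme y est énoncé sans aucune démonstration (il est inséré entre la Remarque I et la Remarque II, puis simplement invoqué dans cette dernière pour obtenir $\alpha \in N_{L/K}\Lambda_{\mathfrak{m}}$). Votre argument comble donc une lacune réelle du texte plutôt qu'il ne suit ou ne dévie d'une preuve existante. Sur le fond : la partie diviseur (décomposition de $\mathfrak{U}$ suivant le support et nullité des coefficients de $N_{L/K}(\mathfrak{U}_{0})$ aux places divisant $\mathfrak{m}$) est immédiate et exacte --- notez toutefois que l'intégrité de $\mathbb{Z}_{\ell}$ n'y joue aucun rôle ; c'est dans la partie élément qu'elle sert, pour simplifier par $\tilde{f}_{\mathfrak{q}}$ après avoir observé que, l'extension étant galoisienne, les places conjuguées au-dessus de $\mathfrak{q}$ ont même degré d'inertie logarithmique. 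La partie élément, qui est la vraie difficulté, est traitée correctement : la réduction au sous-module $W$ des vecteurs de somme nulle au-dessus de chaque $\mathfrak{q} \mid \mathfrak{m}$, puis sa réalisation par les éléments $s^{1-\sigma}$, fonctionne parce que chaque $s^{1-\sigma}$ est de norme $1$ (puisque $N_{L/K}\circ\sigma=N_{L/K}$), que le théorème d'approximation $\ell$-adique (lemme 2.1.1 de l'article, appliqué à $L$) permet de prescrire $\tilde{v}_{\mathfrak{P}_{0}}(s)=1$ et $\tilde{v}_{\mathfrak{P}}(s)=0$ aux autres places au-dessus de $\mathfrak{m}$, et que l'équivariance $\tilde{v}_{\mathfrak{P}}(\sigma s)=\tilde{v}_{\sigma^{-1}\mathfrak{P}}(s)$ donne alors exactement le vecteur $e_{\mathfrak{P}_{0}}-e_{\mathfrak{P}_{1}}$ ; enfin, $\mathcal{R}_{L}$ étant un $\mathbb{Z}_{\ell}$-module et $\tilde{v}_{\mathfrak{P}}$ comme $N_{L/K}$ étant $\mathbb{Z}_{\ell}$-linéaires, les puissances $\ell$-adiques et le produit fini fournissent bien $z\in\ker N_{L/K}$ réalisant $(-n_{\mathfrak{P}})_{\mathfrak{P}\mid\mathfrak{m}}$, d'où $y=xz$. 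Une seule précision d'interprétation s'impose : le $x\in L$ de l'énoncé doit se lire $x\in\mathcal{R}_{L}$ (c'est ainsi que le lemme est utilisé dans la Remarque II), ce qui est précisément le cadre où votre construction à exposants dans $\mathbb{Z}_{\ell}$ a un sens.
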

\smallskip

\smallskip

\noindent \textit{Remarque II :}

\noindent $L/K$ étant une $\ell$-extension abélienne finie, notons
$\mathcal{N}_{L/K}$ les éléments de $\mathcal{R}_{K}$ qui sont partout normes locales,
$\mathcal{N}_{L/K, \mathfrak{m}}= \mathcal{N}_{L/K} \cap \Lambda_{\mathfrak{m}}$, où $\mathfrak{m}$ désigne un module de $K$ et $\Lambda_{\mathfrak{m}}$ les idèles
principaux dont le diviseur logarithmique associé est premier au module $\mathfrak{m}$.
Nous considérons alors l'application canonique :
$$ \mathcal{N}_{L/K, \mathfrak{m}} \longrightarrow \frac{\mathcal{N}_{L/K} } {N_{L/K} \mathcal{R}_{L}}.$$
Le noyau correspond aux éléments $\alpha \in \mathcal{N}_{L/K, m}$ pour lesquels $\exists x \in \mathcal{R}_{L} \; \textrm{tel que} \;  \alpha=N_{L/K}(x)$.  
Comme par hypothèse $\alpha \in \Lambda_{m}$, par application du lemme ci-dessus, nous avons $\alpha \in N_{L/K} \Lambda_{m}$.

\noindent Quant à la surjectivité de l'application ci-dessus : soit $\alpha \in \mathcal{N}_{L/K}$ et soit $\mathfrak{p} \vert m$ divisant 
$\psi(\alpha)$ le diviseur logarithmique associé à $\alpha$ à une certaine puissance $\lambda$ positive, comme $\alpha$ est une norme locale $\lambda$ est divisible par le degré résiduel logarithmique $\tilde{f}_{\mathfrak{p}}$. D'après la remarque I, nous pouvons modifier $\alpha$ modulo $N_{L/K}\mathcal{R}_{L}$ de telle sorte que $\alpha$ soit premier à $m$.

\noindent Ainsi, il vient :
$$\frac{\mathcal{N}_{L/K,m}} {N_{L/K} \Lambda_{m}} \simeq \frac{\mathcal{N}_{L/K} } {N_{L/K} \mathcal{R}_{L}}.$$

\bigskip\bigskip

\begin{defi}
Soit $L/K$ une $\ell$-extension abélienne finie, nous posons :

$$ \mathcal{A}_{L/K} =\{ \alpha \in \mathcal{R}_{K} \; / \; \psi(\alpha) \in N_{L/K}(D\ell_{L}^{\tilde{\mathfrak{f}}_{L/K}}) \}.$$

\noindent Ainsi les éléments de $\mathcal{A}_{L/K}$ ont un diviseur logarithmique associé premier au
conducteur logarithmique global.
\smallskip

\noindent Nous définissons également l'application suivante pour un ensemble $S$ des places de $K$ logarithmiquement ramifiées dans $L$ :

 \begin{center}
$\begin{array}{ccccc}
 \psi_{L/K} & : & \mathcal{A}_{L/K} & \to & \widetilde{\Gamma}_{L/K}= \prod_{\mathfrak{p} \vert \tilde{\mathfrak{f}}_{L/K} } \widetilde{\Gamma}_{L/K, \mathfrak{p}}  \\
 & & \alpha & \mapsto & (....,\widetilde{(\frac{\alpha, L/K}{\mathfrak{p}})} , ...)_{\mathfrak{p} \vert \tilde{\mathfrak{f}}_{L/K} }\\
\end{array}$
\end{center}

\noindent où $\widetilde{\Gamma}_{L/K, \mathfrak{p}}$ désigne le sous-groupe d'inertie logarithmique en $\mathfrak{p}$
dans $L/K$. 

\end{defi}
\smallskip

\noindent \textit{Remarques : }

\noindent Cette application a un sens : car si nous restreignons le symbole de Hasse logarithmique aux idèles principaux
dont le diviseur logarithmique associé est premier à $\mathfrak{p}$, alors l'image du symbole est le sous-groupe d'inertie
logarithmique (théorème 2.1.2).
\medskip

\begin{proposition}
 Nous avons : $\mathrm{Ker}\psi_{L/K}=\{ \alpha \in \Lambda_{\tilde{\mathfrak{f}}_{L/K}} \; \textrm{partout norme locale}  \}$.

\end{proposition}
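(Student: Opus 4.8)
Le plan est d'établir la double inclusion en combinant deux ingrédients déjà disponibles dans l'article. D'une part la proposition caractérisant le noyau du symbole de Hasse logarithmique, selon laquelle $\widetilde{(\frac{\alpha,L/K}{\mathfrak{p}})}=1$ si et seulement si $\alpha$ est norme locale en $\mathfrak{p}$. D'autre part la remarque I et le lemme de norme (lemme~3.0.1), qui assurent que tout idèle principal partout norme locale possède un diviseur logarithmique associé qui est une norme globale, réalisable de surcroît par un diviseur premier à un module donné. Le point à surveiller est que l'application $\psi_{L/K}$ n'enregistre les symboles qu'aux places ramifiées $\mathfrak{p}\mid\tilde{\mathfrak{f}}_{L/K}$~: c'est l'appartenance à $\mathcal{A}_{L/K}$ qui contrôlera le comportement aux places logarithmiquement non ramifiées.

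Pour l'inclusion $\supseteq$, je pars de $\alpha\in\Lambda_{\tilde{\mathfrak{f}}_{L/K}}$ partout norme locale. Je vérifie d'abord que $\alpha\in\mathcal{A}_{L/K}$~: d'après la remarque I, $\psi(\alpha)$ est une norme globale $N_{L/K}(\mathfrak{U})$, et comme $\alpha\in\Lambda_{\tilde{\mathfrak{f}}_{L/K}}$ le diviseur $\psi(\alpha)$ est premier au conducteur ; le lemme~3.0.1 appliqué à $\mathfrak{m}=\tilde{\mathfrak{f}}_{L/K}$ permet de remplacer $\mathfrak{U}$ par un diviseur $\mathfrak{B}\in D\ell_{L}^{\tilde{\mathfrak{f}}_{L/K}}$ de même norme, d'où $\psi(\alpha)\in N_{L/K}(D\ell_{L}^{\tilde{\mathfrak{f}}_{L/K}})$ et $\alpha\in\mathcal{A}_{L/K}$. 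Ensuite, pour chaque $\mathfrak{p}\mid\tilde{\mathfrak{f}}_{L/K}$, l'idèle $\alpha$ étant en particulier norme locale en $\mathfrak{p}$, la proposition de caractérisation du noyau donne $\widetilde{(\frac{\alpha,L/K}{\mathfrak{p}})}=1$~; donc $\psi_{L/K}(\alpha)=1$ et $\alpha\in\mathrm{Ker}\,\psi_{L/K}$.

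Pour l'inclusion réciproque $\subseteq$, je pars de $\alpha\in\mathrm{Ker}\,\psi_{L/K}$. Par définition $\alpha\in\mathcal{A}_{L/K}$, donc $\psi(\alpha)$ est premier au conducteur logarithmique global, ce qui donne immédiatement $\alpha\in\Lambda_{\tilde{\mathfrak{f}}_{L/K}}$. La nullité de tous les symboles aux places ramifiées, jointe à la proposition de caractérisation du noyau, montre que $\alpha$ est norme locale en toute place $\mathfrak{p}\mid\tilde{\mathfrak{f}}_{L/K}$. Le point délicat, et principal obstacle, est de traiter les places logarithmiquement non ramifiées, pour lesquelles $\psi_{L/K}$ ne fournit aucune information directe~: j'exploite alors l'écriture $\psi(\alpha)=N_{L/K}(\mathfrak{B})$ avec $\mathfrak{B}\in D\ell_{L}^{\tilde{\mathfrak{f}}_{L/K}}$, garantie par $\alpha\in\mathcal{A}_{L/K}$. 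En une place non ramifiée $\mathfrak{p}$, comme $N_{L/K}(\mathfrak{P})=\mathfrak{p}^{\tilde{f}_{L_{\mathfrak{P}}/K_{\mathfrak{p}}}}$ et que l'extension est abélienne (tous les $\tilde{f}_{L_{\mathfrak{P}}/K_{\mathfrak{p}}}$ pour $\mathfrak{P}\mid\mathfrak{p}$ valent le même degré résiduel logarithmique $\tilde{f}_{\mathfrak{p}}$), l'exposant de $\mathfrak{p}$ dans $N_{L/K}(\mathfrak{B})$, à savoir $\tilde{v}_{\mathfrak{p}}(\alpha)$, est divisible par $\tilde{f}_{\mathfrak{p}}$. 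Or en une place logarithmiquement non ramifiée les unités logarithmiques et $\tilde{\pi}_{\mathfrak{p}}^{\tilde{f}_{\mathfrak{p}}}$ sont des normes locales (fait standard de la théorie logarithmique locale), si bien que la condition $\tilde{f}_{\mathfrak{p}}\mid\tilde{v}_{\mathfrak{p}}(\alpha)$ équivaut exactement à ce que $\alpha$ soit norme locale en $\mathfrak{p}$ ; de façon équivalente, $\widetilde{(\frac{\alpha,L/K}{\mathfrak{p}})}=(\widetilde{\frac{L/K}{\mathfrak{p}}})^{-\tilde{v}_{\mathfrak{p}}(\alpha)}=1$ par la proposition~2.1.1~ii). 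Ainsi $\alpha$ est norme locale partout et appartient bien à $\{\alpha\in\Lambda_{\tilde{\mathfrak{f}}_{L/K}}\ \textrm{partout norme locale}\}$, ce qui achève la démonstration.
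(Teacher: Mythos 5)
Votre démonstration est correcte et suit essentiellement la même démarche que celle de l'article~: double inclusion reposant sur la caractérisation du noyau du symbole de Hasse logarithmique aux places divisant $\tilde{\mathfrak{f}}_{L/K}$, sur la remarque~I et le lemme de norme pour montrer que $\psi(\alpha)\in N_{L/K}(D\ell_{L}^{\tilde{\mathfrak{f}}_{L/K}})$, et sur la divisibilité de $\tilde{v}_{\mathfrak{p}}(\alpha)$ par le degré résiduel logarithmique aux places non ramifiées. Vous explicitez simplement avec plus de soin ce que l'article résume par «~$\alpha$ a la bonne valuation en $\mathfrak{q}$~», ce qui ne change pas l'argument.
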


\begin{proof}

\noindent Compte-tenu de la caractérisation du noyau du symbole de Hasse logarithmique, les éléments du noyau de 
$\psi_{L/K}$ sont ceux qui sont normes locales en tout $\mathfrak{p}$, $\mathfrak{p} \vert \tilde{\mathfrak{f}}_{L/K}$.
Or si $\mathfrak{q} \not \vert \tilde{\mathfrak{f}}_{L/K}$, $\alpha \in \mathcal{A}_{L/K}$ est norme locale en
$\mathfrak{q}$ car $\psi(\alpha)$ est la norme d'un diviseur logarithmique, donc $\alpha$ a la bonne valuation en $\mathfrak{q}$.

\smallskip

\noindent Réciproquement soit $\alpha \in\Lambda_{\tilde{\mathfrak{f}}_{L/K}} $ supposé partout norme locale. Alors nous obtenons en utilisant le remarque II : $\psi(\alpha) \in N_{L/K}(D\ell_{L}^{\tilde{\mathfrak{f}}_{L/K}})$. Ainsi $\alpha \in \mathcal{A}_{L/K}$ et $\alpha$ est norme local en particulier en tout $\mathfrak{p} \vert \tilde{\mathfrak{f}}_{L/K}$. 

\noindent Le noyau est donc bien celui annoncé.

\end{proof}

\smallskip

\begin{defi}
$L/K$ étant une $\ell$-extension abélienne finie, nous posons :
$$ \mathcal{N}_{L/K}= \{ \alpha \in \mathcal{R}_{K} \; \textrm{ qui sont partout norme locale} \}. $$
\noindent Et pur tout module $\mathfrak{m}$ de $K$, posons :
$$ \mathcal{N}_{L/K, \mathfrak{m}} = \mathcal{N}_{L/K} \cap \Lambda_{\mathfrak{m}} .$$
\end{defi}
\smallskip

\begin{defi}
Nous appelons $\widehat{\Gamma}_{L/K}$ le sous-groupe de $\widetilde{\Gamma}_{L/K}= \prod_{\mathfrak{p} \vert \tilde{\mathfrak{f}}_{L/K} } \widetilde{\Gamma}_{L/K, \mathfrak{p}} $ formé des familles $(\sigma_{\mathfrak{p}})_{\mathfrak{p} \vert \tilde{\mathfrak{f}}_{L/K} }$ telles que $\prod_{\mathfrak{p} \vert \tilde{\mathfrak{f}}_{L/K} } \sigma_{\mathfrak{p}}=1$. Nous notons $\widehat{e}_{L/K}$ l'ordre de ce sous-groupe.

\end{defi}
\smallskip

\begin{theorem}
$L/K$ étant une $\ell$-extension abélienne finie, alors nous avons la suite exacte suivante :

$$ 1 \longrightarrow \mathcal{N}_{L/K, \tilde{\mathfrak{f}}_{L/K}   }     \longrightarrow \mathcal{A}_{L/K}           \longrightarrow  \widehat{\Gamma}_{L/K}     \longrightarrow 1    $$

\end{theorem}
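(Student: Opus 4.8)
Le plan est d'identifier le morphisme central de la suite à l'application $\psi_{L/K}$ de la définition précédente et de déduire l'exactitude des résultats déjà établis. La flèche de gauche est l'inclusion $\mathcal{N}_{L/K,\tilde{\mathfrak{f}}_{L/K}} \hookrightarrow \mathcal{A}_{L/K}$ : elle est bien définie, car un élément $\alpha$ partout norme locale et premier au conducteur possède, d'après la remarque I et le lemme 3.1.1, un diviseur logarithmique $\psi(\alpha)=N_{L/K}(\mathfrak{B})$ avec $\mathfrak{B}\in D\ell_L^{\tilde{\mathfrak{f}}_{L/K}}$, donc $\alpha\in\mathcal{A}_{L/K}$ ; et elle est visiblement injective, ce qui fournit l'exactitude à gauche. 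La flèche de droite est $\psi_{L/K}\colon \alpha\mapsto\big(\widetilde{(\frac{\alpha,L/K}{\mathfrak{p}})}\big)_{\mathfrak{p}\vert\tilde{\mathfrak{f}}_{L/K}}$. L'exactitude au milieu est exactement la proposition donnant le noyau de $\psi_{L/K}$, à savoir $\mathrm{Ker}\,\psi_{L/K}=\{\alpha\in\Lambda_{\tilde{\mathfrak{f}}_{L/K}}\ \textrm{partout norme locale}\}=\mathcal{N}_{L/K,\tilde{\mathfrak{f}}_{L/K}}$. Il ne reste donc qu'à établir l'exactitude à droite, c'est-à-dire que $\psi_{L/K}$ réalise une surjection de $\mathcal{A}_{L/K}$ sur $\widehat{\Gamma}_{L/K}$.

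Je vérifie d'abord que $\psi_{L/K}(\mathcal{A}_{L/K})\subseteq\widehat{\Gamma}_{L/K}$. Soit $\alpha\in\mathcal{A}_{L/K}$. Pour une place $\mathfrak{p}\nmid\tilde{\mathfrak{f}}_{L/K}$, la proposition 2.1.1 ii) donne $\widetilde{(\frac{\alpha,L/K}{\mathfrak{p}})}=(\widetilde{\frac{L/K}{\mathfrak{p}}})^{-\mathfrak{a}}$ où $\psi(\alpha)=\mathfrak{p}^{\mathfrak{a}}a$. Comme $\psi(\alpha)=N_{L/K}(\mathfrak{U})$ avec $\mathfrak{U}\in D\ell_L^{\tilde{\mathfrak{f}}_{L/K}}$, la valuation $\mathfrak{a}$ est un multiple du degré résiduel logarithmique $\tilde{f}_{\mathfrak{p}}$, lequel est l'ordre du Frobenius $(\widetilde{\frac{L/K}{\mathfrak{p}}})$ ; d'où $\widetilde{(\frac{\alpha,L/K}{\mathfrak{p}})}=1$ pour toute place hors du conducteur. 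La formule du produit donne alors
$$\prod_{\mathfrak{p}\vert\tilde{\mathfrak{f}}_{L/K}}\widetilde{\Big(\frac{\alpha,L/K}{\mathfrak{p}}\Big)}=\prod_{\mathfrak{p}}\widetilde{\Big(\frac{\alpha,L/K}{\mathfrak{p}}\Big)}=1,$$
ce qui signifie précisément $\psi_{L/K}(\alpha)\in\widehat{\Gamma}_{L/K}$.

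Le point principal est la surjectivité. Soit $(\sigma_{\mathfrak{p}})_{\mathfrak{p}\vert\tilde{\mathfrak{f}}_{L/K}}\in\widehat{\Gamma}_{L/K}$, c'est-à-dire $\sigma_{\mathfrak{p}}\in\widetilde{\Gamma}_{L/K,\mathfrak{p}}$ pour chaque $\mathfrak{p}$ et $\prod_{\mathfrak{p}}\sigma_{\mathfrak{p}}=1$. Je complète cette famille en posant $\sigma_{\mathfrak{p}}=1$ pour $\mathfrak{p}\nmid\tilde{\mathfrak{f}}_{L/K}$ et je prends $S=\{\mathfrak{p}\vert\tilde{\mathfrak{f}}_{L/K}\}$. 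Chaque $\sigma_{\mathfrak{p}}$ appartenant au sous-groupe d'inertie, donc au sous-groupe de décomposition, et le produit valant $1$, la réciproque de la formule du produit fournit $\alpha\in\mathcal{R}_K$ tel que $\widetilde{(\frac{\alpha,L/K}{\mathfrak{p}})}=\sigma_{\mathfrak{p}}$ pour toute place $\mathfrak{p}$ ; comme les $\sigma_{\mathfrak{p}}$ sont dans l'inertie, son corollaire permet de choisir $\alpha$ premier au conducteur logarithmique global, soit $\alpha\in\Lambda_{\tilde{\mathfrak{f}}_{L/K}}$. Reste à voir que $\alpha\in\mathcal{A}_{L/K}$. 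Pour $\mathfrak{p}\nmid\tilde{\mathfrak{f}}_{L/K}$ on a $\widetilde{(\frac{\alpha,L/K}{\mathfrak{p}})}=1$, donc par la proposition 2.1.1 ii) $(\widetilde{\frac{L/K}{\mathfrak{p}}})^{-\mathfrak{a}_{\mathfrak{p}}}=1$ où $\mathfrak{a}_{\mathfrak{p}}=\tilde v_{\mathfrak{p}}(\alpha)$ ; la valuation $\mathfrak{a}_{\mathfrak{p}}$ est donc divisible par l'ordre $\tilde{f}_{\mathfrak{p}}$ du Frobenius, de sorte que $\mathfrak{p}^{\mathfrak{a}_{\mathfrak{p}}}=N_{L/K}(\mathfrak{P}^{\mathfrak{a}_{\mathfrak{p}}/\tilde{f}_{\mathfrak{p}}})$ avec $\mathfrak{P}^{\mathfrak{a}_{\mathfrak{p}}/\tilde{f}_{\mathfrak{p}}}$ premier au conducteur. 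Puisque $\alpha$ est premier au conducteur, $\psi(\alpha)$ est supporté hors de $\tilde{\mathfrak{f}}_{L/K}$ et est ainsi la norme d'un diviseur de $D\ell_L^{\tilde{\mathfrak{f}}_{L/K}}$ ; d'où $\alpha\in\mathcal{A}_{L/K}$ et $\psi_{L/K}(\alpha)=(\sigma_{\mathfrak{p}})$.

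La difficulté principale réside dans cette dernière étape : il faut garantir que l'élément $\alpha$ produit par la réciproque de la formule du produit soit simultanément premier au conducteur et de diviseur logarithmique norme d'un diviseur premier au conducteur. C'est le rôle conjoint du corollaire (qui assure la primalité au conducteur lorsque les $\sigma_{\mathfrak{p}}$ sont dans l'inertie) et du contrôle des valuations via l'ordre du Frobenius (qui force, place par place, la divisibilité par $\tilde{f}_{\mathfrak{p}}$, donc la propriété de norme au niveau des diviseurs). Les exactitudes à gauche et au milieu sont, elles, immédiates une fois acquise la caractérisation du noyau de $\psi_{L/K}$.
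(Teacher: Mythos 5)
Votre démonstration est correcte et suit essentiellement la même démarche que celle de l'article : la surjectivité s'obtient par la réciproque de la formule du produit (théorème 2.2.2) appliquée à $S=\{\mathfrak{p} \mid \tilde{\mathfrak{f}}_{L/K}\}$ avec son corollaire (qui permet de prendre $\alpha$ premier au conducteur puisque les $\sigma_{\mathfrak{p}}$ sont dans l'inertie), et l'exactitude au milieu n'est autre que la proposition 3.1.1. Vous explicitez au passage deux points que l'article laisse implicites — la vérification, via la formule du produit, que l'image de $\psi_{L/K}$ satisfait bien la condition $\prod_{\mathfrak{p}\mid\tilde{\mathfrak{f}}_{L/K}}\sigma_{\mathfrak{p}}=1$ définissant $\widehat{\Gamma}_{L/K}$, et la vérification que l'antécédent $\alpha$ fourni appartient bien à $\mathcal{A}_{L/K}$ (divisibilité des valuations $\mathfrak{a}_{\mathfrak{p}}$ par l'ordre $\tilde{f}_{\mathfrak{p}}$ du Frobenius aux places hors conducteur, d'où $\psi(\alpha)\in N_{L/K}(D\ell_{L}^{\tilde{\mathfrak{f}}_{L/K}})$) — ce qui rend votre rédaction plus complète que celle, très condensée, de l'article.
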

\medskip

\begin{proof}
C'est dû d'une part à la réciproque de la formule du produit pour le symbole de Hasse logarithmique.
Nous appliquons en effet le théorème 2.2.2 pour $S$ l'ensemble des places de $K$ logarithmiquement ramifiées dans $L$ et $y=(\sigma_{\mathfrak{p}})_{\mathfrak{p} \vert \tilde{\mathfrak{f}}_{L/K} } \in \widehat{\Gamma}_{L/K}  $.
Nous pouvons donc expliciter un antécédent $\alpha \in \mathcal{R}_{K}$ premier au conducteur logarithmique global tel que :
$\widetilde{(\frac{\alpha, L/K}{\mathfrak{p}}) }=\sigma_{\mathfrak{p}}$, i.e dans $\mathcal{A}_{L/K}$. $\widehat{\Gamma}_{L/K}  $ est bien l'image décrite. D'autre part, le noyau de $\psi_{L/K}$ a été décrit précédemment dans la proposition 3.1.1.

\end{proof}

\subsubsection{La démonstration du théorème}
\medskip

\begin{proof}
Soit $\tilde{f}_{L/K}$ le conducteur logarithmique global
de l'extension $L/K$ considérée et
soit $\alpha \in \mathcal{A}_{L/K}$, alors par définition nous avons :
$$\psi(\alpha)=N_{L/K}(\mathcal{U})$$ 
\noindent avec $\mathcal{U} \in D\ell_{L/K}^{\tilde{f}_{L/K}}$. Or $\mathcal{U}$ est défini modulo l'idéal d'augmentation
noté $\Delta_{L/K}$,

\noindent $\mathcal{U} \in \frac{ D\ell_{L/K}^{\tilde{f}_{L/K}} } { {D\ell_{L/K}^{\tilde{f}_{L/K}}}^{\Delta_{L/K}}}$.
Nous considérons alors $c\ell(\mathcal{U})$ la classe de $\mathcal{U}$ modulo $\tilde{\mathcal{C}\ell}_{L}^{\Delta_{L/K}}.$

\noindent Nous obtenons ainsi une application :
$$ \mathcal{A}_{L/K} \longrightarrow \frac{\tilde{\mathcal{C}\ell}_{L}^{*}}{\tilde{\mathcal{C}\ell}_{L}^{\Delta_{L/K}}}. $$

\noindent Nous explicitons le noyau de cette application :

\noindent $c\ell(\mathcal{U}) \in \tilde{\mathcal{C}\ell}_{L}^{\Delta_{L/K}} $ si et seulement si
$\mathcal{U} =\psi(x) \prod \mathfrak{P}_{i}^{\sigma_{i}-1}$ 
avec $x \in \mathcal{R}_{L}$, $\sigma_i \in \textrm{Gal}(L/K)$ et $\mathfrak{P}_{i} \in \Delta_{L/K}$.
Ainsi $\psi(\alpha)=N_{L/K}(\mathcal{U})=N_{L/K}(\psi(x))=\psi(N_{L/K}(x))$,
i.e $\alpha$ et $x$ ne différent que par une unité logarithmique $\epsilon \in \widetilde{\mathcal{E}}_{K}$ d'où :
$\alpha= N_{L/K}(x) \epsilon$.
Comme $\psi(\alpha)=\psi(N_{L/K}(x))$, nous en déduisons le noyau :
$\widetilde{\mathcal{E}}_{K} N_{L/K}\Lambda_{\tilde{f}_{L/K}}(L)$
\noindent où $\Lambda_{\tilde{f}_{L/K}}(L)$ désigne les idèles principaux de $L$ dont le diviseur
logarithmique associé est premier au conducteur logarithmique.

\noindent De plus cette application est surjective.

\noindent Nous obtenons donc la suite exacte suivante :

$$1 \longrightarrow \widetilde{\mathcal{E}}_{K} N_{L/K}\Lambda_{\tilde{f}_{L/K}} \longrightarrow \mathcal{A}_{L/K} 
\longrightarrow       \frac{\tilde{\mathcal{C}\ell}_{L}^{*}}{\tilde{\mathcal{C}\ell}_{L}^{\Delta_{L/K}}}
\longrightarrow 1$$

\noindent Or nous disposons de la suite exacte :

$$ 1 \longrightarrow  \mathcal{N}_{L/K, \tilde{f}_{L/K}} \longrightarrow \mathcal{A}_{L/K} \longrightarrow
\hat{\Gamma}_{L/K} \longrightarrow 1.$$

\noindent Et nous avons

 $N_{L/K}\Lambda_{\tilde{f}_{L/K}} \subset  \mathcal{N}_{L/K, \tilde{f}_{L/K}}
\cap \widetilde{\mathcal{E}}_{L/K} N_{L/K}\Lambda_{\tilde{f}_{L/K}}.$

\noindent Ainsi il vient,

$$\frac{\mathcal{A}_{L/K}  } {   \mathcal{N}_{L/K, \tilde{f}_{L/K}} } \simeq \hat{\Gamma}_{L/K}.$$

\noindent Donc via les théorèmes d'isomorphisme, nous obtenons :
 $$  \frac {  \frac{\mathcal{A}_{L/K}  } {   N_{L/K}\Lambda_{\tilde{f}_{L/K}} }   }
  {\frac {\mathcal{N}_{L/K, \tilde{f}_{L/K}}} { N_{L/K}\Lambda_{\tilde{f}_{L/K}} } } \simeq \hat{\Gamma}_{L/K}$$

\noindent il en découle :

$$ | \hat{\Gamma}_{L/K}|  |\frac {\mathcal{N}_{L/K, \tilde{f}_{L/K}}} { N_{L/K}\Lambda_{\tilde{f}_{L/K}} } | =| \frac{\mathcal{A}_{L/K}  } {   N_{L/K}\Lambda_{\tilde{f}_{L/K}} }  |.$$

\noindent L'autre suite exacte nous indique :

$$ \frac{\mathcal{A}_{L/K}  } {   \tilde{\mathcal{E}}_{K} N_{L/K}\Lambda_{\tilde{f}_{L/K}} } \simeq 
 \frac{\tilde{\mathcal{C}\ell}_{L}^{*}}{\tilde{\mathcal{C}\ell}_{L}^{\Delta_{L/K}}} $$

\noindent et à nouveau via le théorème du double quotient, nous obtenons comme $   N_{L/K}\Lambda_{\tilde{f}_{L/K}}      \subset   \tilde{\mathcal{E}}_{K} N_{L/K}\Lambda_{\tilde{f}_{L/K}}$ :

$$\frac
{  \frac {\mathcal{A}_{L/K} } { N_{L/K}\Lambda_{\tilde{f}_{L/K}} }  } 
{ \frac {  \tilde{\mathcal{E}}_{K} N_{L/K}\Lambda_{\tilde{f}_{L/K}} }  { N_{L/K}\Lambda_{\tilde{f}_{L/K}}} } 
\simeq 
 \frac{\tilde{\mathcal{C}\ell}_{L}^{*}}{\tilde{\mathcal{C}\ell}_{L/K}^{\Delta_{L}}} .$$

\noindent Finalement, nous obtenons :

$$ |  \frac {\mathcal{A}_{L/K} } { N_{L/K}\Lambda_{\tilde{f}_{L/K}} } | = 
|  \frac {  \tilde{\mathcal{E}}_{K} N_{L/K}\Lambda_{\tilde{f}_{L/K}} }  { N_{L/K}\Lambda_{\tilde{f}_{L/K}}} |
| \frac{\tilde{\mathcal{C}\ell}_{L}^{*}}{\tilde{\mathcal{C}\ell}_{L}^{\Delta_{L/K}}} |. $$

\noindent Or 

$$ \frac {  \tilde{\mathcal{E}}_{K} N_{L/K}\Lambda_{\tilde{f}_{L/K}} }  { N_{L/K}\Lambda_{\tilde{f}_{L/K}}} \simeq 
\frac {\tilde{\mathcal{E}}_{K} } {\tilde{\mathcal{E}}_{L/K} \cap  N_{L/K}\Lambda_{\tilde{f}_{L/K}} }.$$

\noindent Finalement, nous avons :

$$ | \hat{\Gamma}_{L/K}|  (\mathcal{N}_{L/K, \tilde{f}_{L/K}} :  N_{L/K}\Lambda_{\tilde{f}_{L/K}}  ) =(\tilde{\mathcal{C}\ell}_{L}^{*}: \tilde{\mathcal{C}\ell}_{L}^{\Delta_{L/K}}) 
(\tilde{\mathcal{E}}_{K}: \tilde{\mathcal{E}}_{K} \cap  N_{L/K}\Lambda_{\tilde{f}_{L/K}})$$

\noindent d'où le résultat compte-tenu de la remarque II.

\end{proof}
\medskip

\subsection{Exemple d'application du théorème}

\noindent \textbf{Exemple :}
\smallskip

\noindent Considérons l'extension biquadratique suivante : $L=\mathbb{Q}(i, \sqrt{7})$.
Nous avons alors le schéma suivant :

\[
 \xymatrix{
       & \mathbb{Q}(i,\sqrt7)  \ar@{-}[d] \ar@{-}[ld] \ar@{-}[rd]&  \\
   \mathbb{Q}(i)\ar@{-}[rd] & \mathbb{Q}(\sqrt{-7})\ar@{-}[d] & \mathbb{Q}(\sqrt7) \ar@{-}[ld] \\
      & \mathbb{Q} &    
    }
\]

\bigskip

\noindent{Bilan des places ramifiées au sens de la ramification classique : }

-$2$ est  ramifiée dans $\mathbb{Q}(i)/\mathbb{Q}$
\smallskip

-$2$ et $7$ sont ramifiées dans $\mathbb{Q}(\sqrt{-7})/\mathbb{Q}$
\smallskip

-$2$ et $7$ sont ramifiées dans $\mathbb{Q}(\sqrt7)/\mathbb{Q}$
\bigskip

\noindent{Bilan des places ramifiés au sens de la ramification logarithmique : }

\noindent Nous rappelons que dans le cadre des $\ell$-extensions ramification
classique et logarithmique ne diffèrent que pour les places au dessus de $\ell$ \cite{Ja2}.
Nous utilisons également \cite{Re2}.
\smallskip

-$2$ est logaritmiquement ramifiée dans $\mathbb{Q}(i)/\mathbb{Q}$, 

\indent $7$ y est logarithmiquement non ramifiée 
\smallskip

-$2$ est logarithmiquement non ramifiée dans $\mathbb{Q}(\sqrt{-7})/\mathbb{Q}$ (car $-7 \equiv 1 \; \textrm{mod} 8$),

\indent $7$ y est logarithmiquement ramifiée
\smallskip

-$2$ est logarithmiquement ramifiée dans $\mathbb{Q}(\sqrt7)/\mathbb{Q}$, tout comme $7$. 
\bigskip

\noindent Calcul des différents termes de l'expression du défaut du symbole de Hasse logarithmique :

-pour $ | \hat{\Gamma}_{L/\mathbb{Q}}| $ :

\noindent le sous-groupe d'inertie logarithmique fixe par définition l'extension maximale logarithmiquement non ramifiée.
Nous avons :
$\tilde{\Gamma}_{L,2}=\textrm{Gal}(L/\mathbb{Q}(\sqrt 7))= <\sigma>$ et $ \tilde{\Gamma}_{L,7}=\textrm{Gal}(L/\mathbb{Q})(i)= <\tau>$. Il en découle $\hat{e}_{L/\mathbb{Q}}=2$. 

\smallskip

-pour $ (\tilde{\mathcal{C}\ell}_{L}^{*}: \tilde{\mathcal{C}\ell}_{L}^{\Delta_{L/\mathbb{Q}}}) $ :

\noindent $L$ est logarithmiquement principal (\cite[p.45]{So1}) donc ce terme est trivial.
\smallskip

-pour $ (\tilde{\mathcal{E}}_{\mathbb{Q}}: \tilde{\mathcal{E}}_{\mathbb{Q}} \cap  N_{L/\mathbb{Q}}\mathcal{R}_{L})$ :

\noindent d'après \cite[Rem p.3]{So2}, $\tilde{\mathcal{E}}_{\mathbb{Q}}$ coincide avec le tensorisé du groupe des $2$-unités logarithmiques de $\mathbb{Q}$, et est en particulier généré par $-1$ et $2$. Or $-1$ n'est pas norme.
\'Etudions le cas $2$  : comme $7 \equiv 1 \; \textrm{mod} 8$,  $-d$ est une unité $2$-adique, ainsi
$\mathbb{Q}_{2}(i)=\mathbb{Q}_{2}(\sqrt d)$ d'où $[\mathbb{Q}_{2}(i, \sqrt d):\mathbb{Q}_{2}]=2$.
Notons $\mathfrak{P}$ une place de $L$ au dessus de $2$. Comme $2$ est logarithmiquement ramifiée dans $L$,
$L_{\mathfrak{P}}  \cap \mathbb{Q}_{2}^{c} \subset L_{\mathfrak{P}}$, cette inclusion étant stricte,
il vient donc : $L_{\mathfrak{P}}  \cap \mathbb{Q}_{2}^{c}= \mathbb{Q}_{2}$. L'indice de ramification logarithmique est donc trivial, ainsi $2$ est norme : $N_{}(\mathfrak{P})=1.2=2$.
Il s'ensuit :  $ (\tilde{\mathcal{E}}_{\mathbb{Q}}: \tilde{\mathcal{E}}_{\mathbb{Q}} \cap  N_{L/\mathbb{Q}}\mathcal{R}_{L})=2.$ 
\smallskip

\noindent Par application précédent, nous en déduisons que dans cet exemple, le défaut du symbole de Hasse logarithmique
est trivial : $(\mathcal{N}_{L/\mathbb{Q}, \tilde{f}_{L/\mathbb{Q}}} :  N_{L/K} \mathcal{R}_{\mathbb{Q}}  )=1$.
Le principe de Hasse logarithmique s'applique donc dans cette extension.

\bigskip

\bigskip\bigskip

\subsection{Premier cas particulier : celui des $\ell$-extensions cycliques}
\smallskip

\noindent Rappelons d'abord la formule des classes logarithmiques ambiges  {~\cite[§4]{Ja2}} }.

\noindent Par définition du groupe des classes ambiges, la cohomologie nous donne le diagramme suivant :
\smallskip

$$ \xymatrix{
    1 \ar[r]  & \tilde{P\ell}_{K}  \ar[r] \ar[d] & \tilde{D\ell}_{K}  \ar[r] \ar[d] & \tilde{\mathcal{C}\ell}_{K} \ar[r] \ar[d]_{\tilde{j}}&1 \\
    1  \ar[r]  & \tilde{P\ell}_{L}^{G} \ar[r]  & \tilde{D\ell}_{L}^{G}  \ar[r]  & \tilde{\mathcal{C}\ell}_{L}^{G}  \ar[r]  & \mathrm{H^{1}}(G,\tilde{P\ell}_{L} ) \ar[r]  &  \mathrm{H^{1}}(G,\tilde{D\ell}_{L} ) }$$

\smallskip

\noindent où $\tilde{j}$ est le morphisme d'extension de classes. Le lemme du Serpent nous permet alors d'écrire la suite :
$$ 1 \longrightarrow \tilde{P\ell}_{L}^{G}/\tilde{P\ell}_{K} \longrightarrow \tilde{D\ell}_{L}^{G}/\tilde{D\ell}_{K} \longrightarrow
\tilde{\mathcal{C}\ell}_{L}^{G}/ \tilde{j}(\tilde{\mathcal{C}\ell}_{K}) \longrightarrow \mathrm{H^{1}}(G,\tilde{P\ell}_{L} )
\overset{\phi}{ \longrightarrow} \mathrm{H^{1}}(G,\tilde{D\ell}_{L} ).$$

\noindent puis la formule :
$$  | \tilde{\mathcal{C}\ell}_{L}^{G}|=| \tilde{\mathcal{C}\ell}_{K}| \frac{(\tilde{D\ell}_{L}^{G}:\tilde{D\ell}_{K}) \mathrm{H^{1}}(G,\tilde{P\ell}_{L} )} {(\tilde{P\ell}_{L}^{G}:\tilde{P\ell}_{K}) \mathrm{H^{1}}(G,\tilde{D\ell}_{L} )}              |\mathrm{Coker} \phi | $$
\medskip

\begin{proposition}{Formule des classes logarithmiques ambiges {~\cite[§4]{Ja2}} }

\noindent $L/K$ étant une $\ell$-extension cyclique de groupe de Galois $G$, satisfaisant la conjecture de Gross,
 l'ordre du sous-groupe ambige $\tilde{\mathcal{C}\ell}_{L}^{G}$ est :

$$ | \tilde{\mathcal{C}\ell}_{L}^{G}|= \frac {|\tilde{\mathcal{C\ell}}_{K}|  \prod_{\mathfrak{p} \in P\ell_{K}^{\infty}} 
d_{\mathfrak{p}}(L/K) \prod_{\mathfrak{p} \in P\ell_{K}^{0}} \tilde{e}_{\mathfrak{p}}(L/K) }
{[L^{c}:K^{c}] (\tilde{\mathcal{E}}_{K} :\tilde{\mathcal{E}}_{K} \cap N_{L/K} \mathcal{R}_{L} )} |\mathrm{Coker} \phi |$$

\end{proposition}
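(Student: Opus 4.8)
Le point de départ est la formule déjà obtenue par le lemme du serpent,
$$ |\tilde{\mathcal{C}\ell}_{L}^{G}|=|\tilde{\mathcal{C}\ell}_{K}|\,\frac{(\tilde{D\ell}_{L}^{G}:\tilde{D\ell}_{K})\;|\mathrm{H}^{1}(G,\tilde{P\ell}_{L})|}{(\tilde{P\ell}_{L}^{G}:\tilde{P\ell}_{K})\;|\mathrm{H}^{1}(G,\tilde{D\ell}_{L})|}\,|\mathrm{Coker}\,\phi| ; $$
comme $|\mathrm{Coker}\,\phi|$ figure déjà tel quel dans l'énoncé, il ne reste qu'à évaluer les quatre termes d'indice et de cohomologie. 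Le plan consiste à séparer le bloc \og diviseurs \fg{} (qui produira les facteurs locaux de ramification $\tilde{e}_{\mathfrak{p}}$) du bloc \og idèles principaux \fg{} (qui produira, via le quotient de Herbrand des unités logarithmiques, les facteurs $d_{\mathfrak{p}}$, $[L^{c}:K^{c}]$ et l'indice des unités).

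\emph{Bloc des diviseurs.} Le module $D\ell_{L}=\bigoplus_{\mathfrak{P}}\mathbb{Z}_{\ell}\mathfrak{P}$ est un module de permutation, induit depuis les sous-groupes de décomposition $D_{\mathfrak{p}}$ au-dessus de chaque place finie de $K$ : le lemme de Shapiro donne $\mathrm{H}^{1}(G,D\ell_{L})\simeq\bigoplus_{\mathfrak{p}}\mathrm{H}^{1}(D_{\mathfrak{p}},\mathbb{Z}_{\ell})=0$ puisque chaque $D_{\mathfrak{p}}$ est fini. En passant au degré nul au moyen de la suite du degré (licite sous la conjecture de Gross, qui assure la finitude voulue), on en déduit que le facteur $|\mathrm{H}^{1}(G,\tilde{D\ell}_{L})|$ est trivial. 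Pour l'indice $(\tilde{D\ell}_{L}^{G}:\tilde{D\ell}_{K})$, je raisonnerais place par place : les invariants au-dessus de $\mathfrak{p}$ sont engendrés par $\sum_{\mathfrak{P}\mid\mathfrak{p}}\mathfrak{P}$, tandis que l'image de $\tilde{D\ell}_{K}$ est $\tilde{e}_{\mathfrak{p}}\sum_{\mathfrak{P}\mid\mathfrak{p}}\mathfrak{P}$ (l'indice de ramification logarithmique étant commun aux places au-dessus de $\mathfrak{p}$ dans une extension abélienne), d'où $(\tilde{D\ell}_{L}^{G}:\tilde{D\ell}_{K})=\prod_{\mathfrak{p}\in P\ell_{K}^{0}}\tilde{e}_{\mathfrak{p}}(L/K)$.

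\emph{Bloc des idèles principaux.} J'utiliserais la suite exacte de $G$-modules $1\to\tilde{\mathcal{E}}_{L}\to\mathcal{R}_{L}\xrightarrow{\psi}\tilde{P\ell}_{L}\to 1$, conjuguée au \og Hilbert 90 \fg{} $\ell$-adique $\mathrm{H}^{1}(G,\mathcal{R}_{L})=0$ de la théorie de Jaulent et à l'égalité $\mathcal{R}_{L}^{G}=\mathcal{R}_{K}$. La suite longue de cohomologie fournit d'une part $\tilde{P\ell}_{L}^{G}/\tilde{P\ell}_{K}\simeq\mathrm{H}^{1}(G,\tilde{\mathcal{E}}_{L})$, soit $(\tilde{P\ell}_{L}^{G}:\tilde{P\ell}_{K})=|\mathrm{H}^{1}(G,\tilde{\mathcal{E}}_{L})|$ ; d'autre part, $G$ étant cyclique on a $\mathrm{H}^{2}\simeq\hat{\mathrm{H}}^{0}$ et donc $\mathrm{H}^{1}(G,\tilde{P\ell}_{L})=\ker\bigl(\tilde{\mathcal{E}}_{K}/N_{L/K}\tilde{\mathcal{E}}_{L}\to\mathcal{R}_{K}/N_{L/K}\mathcal{R}_{L}\bigr)=(\tilde{\mathcal{E}}_{K}\cap N_{L/K}\mathcal{R}_{L})/N_{L/K}\tilde{\mathcal{E}}_{L}$. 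En faisant le rapport de ces deux quantités et en reconnaissant le quotient de Herbrand $q(G,\tilde{\mathcal{E}}_{L})=|\tilde{\mathcal{E}}_{K}/N_{L/K}\tilde{\mathcal{E}}_{L}|\big/|\mathrm{H}^{1}(G,\tilde{\mathcal{E}}_{L})|$, on obtient
$$ \frac{|\mathrm{H}^{1}(G,\tilde{P\ell}_{L})|}{(\tilde{P\ell}_{L}^{G}:\tilde{P\ell}_{K})}=\frac{q(G,\tilde{\mathcal{E}}_{L})}{(\tilde{\mathcal{E}}_{K}:\tilde{\mathcal{E}}_{K}\cap N_{L/K}\mathcal{R}_{L})} . $$

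La dernière étape, et c'est là le point délicat, est le calcul du quotient de Herbrand des unités logarithmiques. C'est exactement ici qu'intervient la conjecture de Gross : elle garantit que $\mathbb{Z}_{\ell}\otimes\tilde{\mathcal{E}}_{L}$ a le rang attendu, de sorte que, par l'analogue $\ell$-adique logarithmique du théorème des unités de Dirichlet, la structure galoisienne de $\tilde{\mathcal{E}}_{L}\otimes\mathbb{Q}_{\ell}$ se lit sur les complétés aux places logarithmiquement ramifiées à l'infini et sur la $\mathbb{Z}_{\ell}$-extension cyclotomique ; on en tire $q(G,\tilde{\mathcal{E}}_{L})=\prod_{\mathfrak{p}\in P\ell_{K}^{\infty}}d_{\mathfrak{p}}(L/K)\big/[L^{c}:K^{c}]$. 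Il suffit alors de reporter $(\tilde{D\ell}_{L}^{G}:\tilde{D\ell}_{K})=\prod_{P\ell_{K}^{0}}\tilde{e}_{\mathfrak{p}}$, $|\mathrm{H}^{1}(G,\tilde{D\ell}_{L})|=1$ et le rapport ci-dessus dans la formule du serpent pour retrouver l'énoncé. La difficulté principale est donc bien ce calcul du quotient de Herbrand logarithmique (avec le contrôle du passage au degré nul qu'il présuppose via Gross), le reste ne relevant que de l'algèbre homologique des modules de permutation et de la suite des unités.
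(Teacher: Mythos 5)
The paper does not actually prove this proposition: it derives only the snake-lemma identity displayed just above it and then cites Jaulent~\cite[\S 4]{Ja2} for the evaluation of the four remaining terms, so your attempt has to be measured against that evaluation. Your skeleton is the right one, and the homological algebra of your principal-divisor block is correct ($\ell$-adic Hilbert 90, $\mathcal{R}_L^G=\mathcal{R}_K$, $(\tilde{P\ell}_L^G:\tilde{P\ell}_K)=|\mathrm{H}^1(G,\tilde{\mathcal{E}}_L)|$ and $\mathrm{H}^1(G,\tilde{P\ell}_L)\simeq(\tilde{\mathcal{E}}_K\cap N_{L/K}\mathcal{R}_L)/N_{L/K}\tilde{\mathcal{E}}_L$). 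But your two key numerical claims are both false, each off by the same factor $[L^c:K^c]$ in opposite directions, so your final formula comes out right only through a compensation of errors.

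Concretely: Shapiro's lemma applies to the permutation module $D\ell_L$, not to its degree-zero submodule. From $0\to\tilde{D\ell}_L\to D\ell_L\to\deg_L(D\ell_L)\to 0$ and $\mathrm{H}^1(G,D\ell_L)=0$ one gets $\mathrm{H}^1(G,\tilde{D\ell}_L)\simeq\deg_L(D\ell_L)/\deg_L(D\ell_L^G)$, a cokernel that has no reason to vanish (and Gross's conjecture plays no role there). Indeed the paper itself, in the proof of its final corollary, quotes Jaulent's isomorphism $\mathrm{H}^1(G,\tilde{D\ell}_L)\simeq\mathbb{Z}_\ell/\bigl([L^c:K^c]\mathbb{Z}_\ell+\cdots\bigr)$, a group of order $[L^c:K^c]$ for instance when $L/K$ is logarithmically unramified with $L\cap K^c=K$; moreover, were this $\mathrm{H}^1$ trivial, $\mathrm{Coker}\,\phi$ would be trivial as well and the factor $|\mathrm{Coker}\,\phi|$ in the very statement you are proving would be vacuous. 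For the same reason $(\tilde{D\ell}_L^G:\tilde{D\ell}_K)$ is not $\prod_{\mathfrak{p}}\tilde{e}_{\mathfrak{p}}$ --- that is the index of the \emph{full} divisor groups --- and the correct total contribution of the divisor block is $\prod_{\mathfrak{p}\in P\ell_K^{0}}\tilde{e}_{\mathfrak{p}}(L/K)\big/[L^c:K^c]$: the $[L^c:K^c]$ of the formula is produced here, on the divisor side. Symmetrically, the Herbrand quotient of the logarithmic units carries no such denominator: under Gross, $\tilde{\mathcal{E}}_L$ has $\mathbb{Z}_\ell$-rank $r_1(L)+r_2(L)$ and $\mathbb{Q}_\ell\otimes\tilde{\mathcal{E}}_L\simeq\bigoplus_{v\mid\infty}\mathbb{Q}_\ell[G/G_v]$ (where $G_v$ is the decomposition group at $v$; the trivial summand subtracted in the classical Dirichlet--Herbrand computation is exactly cancelled by the places above $\ell$ constrained by the degree relation), whence $q(G,\tilde{\mathcal{E}}_L)=\prod_{\mathfrak{p}\in P\ell_K^{\infty}}d_{\mathfrak{p}}(L/K)$ and not $\prod d_{\mathfrak{p}}/[L^c:K^c]$, which is incompatible with that structure as soon as $[L^c:K^c]>1$. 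Both blocks must therefore be redone: transferring the factor $[L^c:K^c]$ from your unit block to your divisor block is precisely what Jaulent's computation does.
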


\bigskip

\begin{proposition}

\noindent $L/K$ étant une $\ell$-extension cyclique de groupe de Galois $G$, satifsaisant la conjecture de Gross, nous avons :
$$ (\tilde{\mathcal{C}\ell}_{K} : N_{L/K} \tilde{\mathcal{C}\ell}_{L}) =\frac{ | \hat{\Gamma}_{L/K}| [L^{c}:K^{c}] }   {\prod_{\mathfrak{p} \in P\ell_{K}^{\infty}} 
d_{\mathfrak{p}}(L/K) \; \prod_{\mathfrak{p} \in P\ell_{K}^{0}}  \tilde{e}_{\mathfrak{p}}(L/K)  \;  |\mathrm{Coker} \phi |}  .       $$

\end{proposition}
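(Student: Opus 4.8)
La stratégie consiste à combiner trois ingrédients déjà disponibles : le théorème principal de cette section, le théorème $\ell$-adique de la norme de Hasse \cite{Re1} spécialisé au cas cyclique, et la formule des classes logarithmiques ambiges (proposition précédente). Je commencerais par réduire le théorème principal. Comme $L/K$ est une $\ell$-extension cyclique, le théorème $\ell$-adique de la norme de Hasse \cite{Re1} assure qu'un idèle principal partout norme locale est une norme globale, de sorte que $\mathcal{N}_{L/K}=N_{L/K}\mathcal{R}_{L}$ et $(\mathcal{N}_{L/K}:N_{L/K}\mathcal{R}_{L})=1$. Le théorème principal se ramène alors à l'identité
$$ |\hat{\Gamma}_{L/K}|=(\tilde{\mathcal{C}\ell}_{L}^{*}:\tilde{\mathcal{C}\ell}_{L}^{\Delta_{L/K}})\,(\tilde{\mathcal{E}}_{K}:\tilde{\mathcal{E}}_{K}\cap N_{L/K}\mathcal{R}_{L}). $$

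Ensuite j'exprimerais l'indice cherché au moyen des ordres des groupes en jeu, la conjecture de Gross (hypothèse de l'énoncé) garantissant la finitude de $\tilde{\mathcal{C}\ell}_{L}$. D'une part, la suite exacte de la norme $1\to\tilde{\mathcal{C}\ell}_{L}^{*}\to\tilde{\mathcal{C}\ell}_{L}\overset{N_{L/K}}{\longrightarrow}N_{L/K}\tilde{\mathcal{C}\ell}_{L}\to1$ donne immédiatement $(\tilde{\mathcal{C}\ell}_{K}:N_{L/K}\tilde{\mathcal{C}\ell}_{L})=|\tilde{\mathcal{C}\ell}_{K}|\,|\tilde{\mathcal{C}\ell}_{L}^{*}|/|\tilde{\mathcal{C}\ell}_{L}|$. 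D'autre part, en appliquant l'endomorphisme $\sigma-1$ (où $\sigma$ engendre $G$) au module fini $\tilde{\mathcal{C}\ell}_{L}$, la suite exacte à quatre termes
$$ 1\to\tilde{\mathcal{C}\ell}_{L}^{G}\to\tilde{\mathcal{C}\ell}_{L}\overset{\sigma-1}{\longrightarrow}\tilde{\mathcal{C}\ell}_{L}\to\tilde{\mathcal{C}\ell}_{L}/\tilde{\mathcal{C}\ell}_{L}^{\Delta_{L/K}}\to1, $$
dont l'image de $\sigma-1$ est exactement $\tilde{\mathcal{C}\ell}_{L}^{\Delta_{L/K}}$, montre par égalité des ordres du noyau et du conoyau que $|\tilde{\mathcal{C}\ell}_{L}^{\Delta_{L/K}}|=|\tilde{\mathcal{C}\ell}_{L}|/|\tilde{\mathcal{C}\ell}_{L}^{G}|$. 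Comme $\tilde{\mathcal{C}\ell}_{L}^{\Delta_{L/K}}\subseteq\tilde{\mathcal{C}\ell}_{L}^{*}$ (car $N_{L/K}(c^{\sigma-1})=1$), il vient $|\tilde{\mathcal{C}\ell}_{L}^{*}|=(\tilde{\mathcal{C}\ell}_{L}^{*}:\tilde{\mathcal{C}\ell}_{L}^{\Delta_{L/K}})\,|\tilde{\mathcal{C}\ell}_{L}|/|\tilde{\mathcal{C}\ell}_{L}^{G}|$, d'où
$$ (\tilde{\mathcal{C}\ell}_{K}:N_{L/K}\tilde{\mathcal{C}\ell}_{L})=\frac{|\tilde{\mathcal{C}\ell}_{K}|\,(\tilde{\mathcal{C}\ell}_{L}^{*}:\tilde{\mathcal{C}\ell}_{L}^{\Delta_{L/K}})}{|\tilde{\mathcal{C}\ell}_{L}^{G}|}. $$

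Pour conclure, je remplacerais $|\tilde{\mathcal{C}\ell}_{L}^{G}|$ par l'expression fournie par la formule des classes logarithmiques ambiges, et $(\tilde{\mathcal{C}\ell}_{L}^{*}:\tilde{\mathcal{C}\ell}_{L}^{\Delta_{L/K}})$ par $|\hat{\Gamma}_{L/K}|/(\tilde{\mathcal{E}}_{K}:\tilde{\mathcal{E}}_{K}\cap N_{L/K}\mathcal{R}_{L})$ issu de la première étape. Les facteurs $|\tilde{\mathcal{C}\ell}_{K}|$ et $(\tilde{\mathcal{E}}_{K}:\tilde{\mathcal{E}}_{K}\cap N_{L/K}\mathcal{R}_{L})$ s'éliminent alors, laissant exactement
$$ (\tilde{\mathcal{C}\ell}_{K}:N_{L/K}\tilde{\mathcal{C}\ell}_{L})=\frac{|\hat{\Gamma}_{L/K}|\,[L^{c}:K^{c}]}{\prod_{\mathfrak{p}\in P\ell_{K}^{\infty}}d_{\mathfrak{p}}(L/K)\,\prod_{\mathfrak{p}\in P\ell_{K}^{0}}\tilde{e}_{\mathfrak{p}}(L/K)\,|\mathrm{Coker}\,\phi|}. $$
Le point le plus délicat n'est pas l'arithmétique des indices, qui est formelle, mais la justification soignée de deux faits : que le passage au cas cyclique annule bien le défaut $(\mathcal{N}_{L/K}:N_{L/K}\mathcal{R}_{L})$ grâce au principe de Hasse $\ell$-adique, et que la finitude de $\tilde{\mathcal{C}\ell}_{L}$ légitime l'égalité des ordres du noyau et du conoyau de $\sigma-1$, sans laquelle la relation $|\tilde{\mathcal{C}\ell}_{L}^{\Delta_{L/K}}|=|\tilde{\mathcal{C}\ell}_{L}|/|\tilde{\mathcal{C}\ell}_{L}^{G}|$ tomberait en défaut.
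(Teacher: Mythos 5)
Votre démonstration est correcte et suit essentiellement la même voie que celle de l'article : réduction du théorème 3.1.1 par le théorème $\ell$-adique de la norme de Hasse, utilisation des deux suites exactes (noyau de la norme et noyau/image de $\sigma-1$) pour identifier $(\tilde{\mathcal{C}\ell}_{K}:N_{L/K}\tilde{\mathcal{C}\ell}_{L})$ à $(\tilde{\mathcal{C}\ell}_{L}^{*}:\tilde{\mathcal{C}\ell}_{L}^{\Delta_{L/K}})\,|\tilde{\mathcal{C}\ell}_{K}|/|\tilde{\mathcal{C}\ell}_{L}^{G}|$, puis substitution de la formule des classes logarithmiques ambiges. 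Seul l'ordre des étapes diffère (vous partez de l'indice cherché et substituez, l'article établit d'abord la relation puis la transforme), et vos précisions sur l'inclusion $\tilde{\mathcal{C}\ell}_{L}^{\Delta_{L/K}}\subseteq\tilde{\mathcal{C}\ell}_{L}^{*}$ et sur le rôle de la conjecture de Gross sont des ajouts bienvenus mais non substantiels.
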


\medskip

\begin{proof}

\noindent  $L/K$ étant une $\ell$-extension cyclique, par le théorème $\ell$-adique de
la norme de Hasse \cite{Re1} nous savons : $(\mathcal{N}_{L/K} : N_{L/K}\mathcal{R}_{L})=1$.
Il vient donc en appliquant la formule du théorème précédent :

$$ | \hat{\Gamma}_{L/K}| =(\tilde{\mathcal{C}\ell}_{L}^{*}: \tilde{\mathcal{C}\ell}_{L}^{\Delta_{L/K}}) 
(\tilde{\mathcal{E}}_{K}: \tilde{\mathcal{E}}_{K} \cap  N_{L/K} \mathcal{R}_{L}).$$

\noindent Or la formule des classes ambiges établie par Jaulent nous donne dans ce contexte :

$$ (\tilde{\mathcal{E}}_{K} : \tilde{\mathcal{E}}_{K} \cap  N_{L/K} \mathcal{R}_{L})= \frac {|\tilde{\mathcal{C\ell}}_{K}|  \prod_{\mathfrak{p} \in P\ell_{K}^{\infty} |\mathrm{Coker} \phi |} 
d_{\mathfrak{p}}(L/K) \prod_{\mathfrak{p} \in P\ell_{K}^{0}} \tilde{e}_{\mathfrak{p}}(L/K) }
{[L^{c}:K^{c}]  |\tilde{\mathcal{C}\ell}_{L}^{G}| }  |\mathrm{Coker} \phi |.$$

\noindent Il en découle :
$$ (\tilde{\mathcal{C}\ell}_{L}^{*}: \tilde{\mathcal{C}\ell}_{L}^{\Delta_{L/K}}) \frac{|\tilde{\mathcal{C\ell}}_{K}| } { |\tilde{\mathcal{C}\ell}_{L}^{G}|}  = \frac{ | \hat{\Gamma}_{L/K}| [L^{c}:K^{c}] }   {\prod_{\mathfrak{p} \in P\ell_{K}^{\infty}} 
d_{\mathfrak{p}}(L/K) \; \prod_{\mathfrak{p} \in P\ell_{K}^{0}} \tilde{e}_{\mathfrak{p}}(L/K) \; |\mathrm{Coker} \phi | }              .$$

\noindent Or nous disposons des deux suites exactes suivantes :
$\sigma$ désignant un générateur de $G=\textrm{Gal}(L/K)$,
$$ 1 \longrightarrow     \tilde{\mathcal{C}\ell}_{L}^{*}
\longrightarrow \tilde{\mathcal{C}\ell}_{L}
\longrightarrow N_{L/K} \tilde{\mathcal{C}\ell}_{L}
\longrightarrow 1  \; (1)$$

$$ 1 \longrightarrow   \tilde{\mathcal{C}\ell}_{L}^{G}  
\longrightarrow \tilde{\mathcal{C}\ell}_{L}
\longrightarrow \tilde{\mathcal{C}\ell}_{L}^{\sigma-1}
\longrightarrow 1  \; (2)$$

\noindent D'après (1), nous en déduisons :
$$ |\tilde{\mathcal{C}\ell}_{L}^{*} |= \frac {|\tilde{\mathcal{C}\ell}_{L}|} {|N_{L/K} \tilde{\mathcal{C}\ell}_{L}|}$$

\noindent et par (2), nous avons également :
$$ |\tilde{\mathcal{C}\ell}_{L}^{\sigma-1}|= \frac{|\tilde{\mathcal{C}\ell}_{L}|} {| \tilde{\mathcal{C}\ell}_{L}^{G}|}.$$

\noindent Finalement nous obtenons :
$$ (\tilde{\mathcal{C}\ell}_{L}^{*}: \tilde{\mathcal{C}\ell}_{L}^{\Delta_{L/K}}) \frac{|\tilde{\mathcal{C\ell}}_{K}| } { |\tilde{\mathcal{C}\ell}_{L}^{G}|} = (\tilde{\mathcal{C}\ell}_{K} : N_{L/K} \tilde{\mathcal{C}\ell}_{L} ) .$$

\noindent Ainsi, nous avons :
$$ (\tilde{\mathcal{C}\ell}_{K} : N_{L/K} \tilde{\mathcal{C}\ell}_{L}) =\frac{ | \hat{\Gamma}_{L/K}| [L^{c}:K^{c}] }   {\prod_{\mathfrak{p} \in P\ell_{K}^{\infty}} 
d_{\mathfrak{p}}(L/K) \; \prod_{\mathfrak{p} \in P\ell_{K}^{0}} \; \tilde{e}_{\mathfrak{p}}(L/K) \; |\mathrm{Coker} \phi | }    .       $$

\end{proof}
\medskip

\subsection {Deuxième cas particulier}

\noindent Considérons maintenant une $\ell$-extension cyclique $L/K$ pour laquelle $ | \hat{\Gamma}_{L/K}|=1$. 

\noindent Comme précédemment  le théorème $\ell$-adique de
la norme de Hasse \cite{Re1} nous donne :
$(\mathcal{N}_{L/K} : N_{L/K}\mathcal{R}_{L})=1$.

\noindent Par le théorème 3.1.1, il vient :
$$(\tilde{\mathcal{C}\ell}_{L}^{*}: \tilde{\mathcal{C}\ell}_{L}^{\Delta_{L/K}}) (\tilde{\mathcal{E}}_{K} : \tilde{\mathcal{E}}_{K} \cap  N_{L/K} \mathcal{R}_{L})=1.$$

\noindent Il s'agit alors d'un produit de deux entiers valant $1$, nous obtenons alors les égalités suivantes compte tenu des
relations d'inclusion :
$$ \tilde{\mathcal{C}\ell}_{L}^{*}= \tilde{\mathcal{C}\ell}_{L}^{\Delta_{L/K}} \qquad \tilde{\mathcal{C}\ell}_{L}^{G}=N_{L/K} 
\tilde{\mathcal{C}\ell}_{L} \qquad \tilde{\mathcal{E}}_{K} \subseteq  N_{L/K} \mathcal{R}_{L}.$$

\medskip

\subsection{Corollaire : version logarithmique du théorème de l'idéal principal}

\begin{cor}

\noindent Supposons que $L/K$ soit une $\ell$-extension cyclique logarithmiquement non ramifiée,
satisfaisant la conjecture de Gross, il existe alors des diviseurs logarithmiques de $K$ non principaux qui
deviennent principaux par extension dans $L$.

\end{cor}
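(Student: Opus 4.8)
The plan is to reformulate the statement as the non-vanishing of the capitulation kernel. Writing $\tilde{j}\colon \tilde{\mathcal{C}\ell}_{K}\to \tilde{\mathcal{C}\ell}_{L}$ for the extension-of-classes homomorphism (the map $\tilde{j}$ appearing in the ambiguous class diagram above), the existence of a non-principal logarithmic divisor of $K$ that becomes principal in $L$ is precisely the assertion $\mathrm{Ker}\,\tilde{j}\neq 1$. Under the Gross conjecture both $\tilde{\mathcal{C}\ell}_{K}$ and $\tilde{\mathcal{C}\ell}_{L}$ are finite $\ell$-groups, so the whole argument can be run by counting. First I would record that the hypotheses of the second particular case treated above are met: since $L/K$ is logarithmically unramified, $\tilde{\mathfrak{f}}_{L/K}=1$, hence $\widetilde{\Gamma}_{L/K}=\prod_{\mathfrak{p}\vert \tilde{\mathfrak{f}}_{L/K}}\widetilde{\Gamma}_{L/K,\mathfrak{p}}$ is an empty product and $|\hat{\Gamma}_{L/K}|=1$. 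Moreover, $L\neq K$ being a non-trivial $\ell$-extension, logarithmic class field theory forces $\tilde{\mathcal{C}\ell}_{K}\neq 1$, so that non-principal logarithmic divisors of $K$ do exist in the first place.

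Next I would extract the two arithmetic inputs. From the second particular case (whose hypotheses now hold) I obtain the order equality $|\tilde{\mathcal{C}\ell}_{L}^{G}|=|N_{L/K}\tilde{\mathcal{C}\ell}_{L}|$, where $G=\mathrm{Gal}(L/K)$; one must keep in mind that this is purely an equality of cardinalities, the ambiguous subgroup $\tilde{\mathcal{C}\ell}_{L}^{G}$ sitting inside $\tilde{\mathcal{C}\ell}_{L}$ while the norm group $N_{L/K}\tilde{\mathcal{C}\ell}_{L}$ sits inside $\tilde{\mathcal{C}\ell}_{K}$. The second input is logarithmic reciprocity for the unramified abelian extension $L/K$: the logarithmic Artin map induces an isomorphism $\tilde{\mathcal{C}\ell}_{K}/N_{L/K}\tilde{\mathcal{C}\ell}_{L}\simeq \mathrm{Gal}(L/K)$, so the logarithmic norm is not surjective and $(\tilde{\mathcal{C}\ell}_{K}:N_{L/K}\tilde{\mathcal{C}\ell}_{L})=n$ with $n=[L:K]>1$. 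Setting $n=[L:K]$ and combining the two inputs yields
$$|\tilde{\mathcal{C}\ell}_{L}^{G}| = |N_{L/K}\tilde{\mathcal{C}\ell}_{L}| = |\tilde{\mathcal{C}\ell}_{K}|/n .$$

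The conclusion is then a short counting step. Every class of $\tilde{\mathcal{C}\ell}_{K}$ extended to $L$ is fixed by $G$, since $G$ acts trivially on divisors of $K$ viewed in $L$; hence $\tilde{j}(\tilde{\mathcal{C}\ell}_{K})\subseteq \tilde{\mathcal{C}\ell}_{L}^{G}$ and therefore $|\tilde{j}(\tilde{\mathcal{C}\ell}_{K})|\le |\tilde{\mathcal{C}\ell}_{L}^{G}|=|\tilde{\mathcal{C}\ell}_{K}|/n<|\tilde{\mathcal{C}\ell}_{K}|$. Consequently
$$|\mathrm{Ker}\,\tilde{j}| = \frac{|\tilde{\mathcal{C}\ell}_{K}|}{|\tilde{j}(\tilde{\mathcal{C}\ell}_{K})|} \ge n > 1 ,$$
so $\mathrm{Ker}\,\tilde{j}$ is non-trivial, and any non-trivial class it contains is represented by a degree-zero non-principal logarithmic divisor of $K$ that becomes principal in $L$, which is the assertion.

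The main obstacle is the strict inequality $|\tilde{\mathcal{C}\ell}_{L}^{G}|<|\tilde{\mathcal{C}\ell}_{K}|$, i.e. the non-surjectivity of the logarithmic norm; this is exactly where the reciprocity of logarithmic class field theory enters, equivalently the identification of the factor $[L^{c}:K^{c}]/|\mathrm{Coker}\,\phi|$ of Proposition~3.3.2 with $[L:K]$ for an unramified extension. A secondary point demanding care is that the equality borrowed from the second particular case compares the orders of subgroups living in two different class groups, so it may be used only numerically; once $n>1$ is in hand the inequality on $|\mathrm{Ker}\,\tilde{j}|$ is forced and no further information on the structure of $\tilde{j}$ is needed.
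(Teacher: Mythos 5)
Your argument fails at the reciprocity step, and the failure is not repairable without changing the statement. The isomorphism you invoke, $\tilde{\mathcal{C}\ell}_{K}/N_{L/K}\tilde{\mathcal{C}\ell}_{L}\simeq \mathrm{Gal}(L/K)$, is not what logarithmic class field theory provides for a logarithmically unramified extension: in Jaulent's theory the \emph{degree-zero} logarithmic class group $\tilde{\mathcal{C}\ell}_{K}$ is isomorphic (under Gross) to $\mathrm{Gal}(K^{lc}/K^{c})$, where $K^{lc}$ denotes the maximal abelian logarithmically unramified $\ell$-extension of $K$ and $K^{c}$ its cyclotomic $\mathbb{Z}_{\ell}$-extension, which is itself logarithmically unramified everywhere and therefore invisible to $\tilde{\mathcal{C}\ell}_{K}$. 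Hence for $L/K$ cyclic logarithmically unramified the Artin map identifies $\tilde{\mathcal{C}\ell}_{K}/N_{L/K}\tilde{\mathcal{C}\ell}_{L}$ with $\mathrm{Gal}(LK^{c}/K^{c})\simeq \mathrm{Gal}(L/L\cap K^{c})$, whose order is $[L^{c}:K^{c}]$ and not $[L:K]$; for the same reason your preliminary claim that $L\neq K$ forces $\tilde{\mathcal{C}\ell}_{K}\neq 1$ is false. Take $K=\mathbb{Q}$ and $L$ the first layer of the cyclotomic $\mathbb{Z}_{\ell}$-extension of $\mathbb{Q}$: this is a cyclic $\ell$-extension, logarithmically unramified at every place, satisfying Gross, yet $\tilde{\mathcal{C}\ell}_{\mathbb{Q}}=1$, so the norm index is $1$ and $\mathrm{Ker}\,\tilde{j}=1$, whereas your argument would output index $\ell$ and a non-trivial kernel. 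The paper's Proposition 3.3.2 already signals the problem: in the unramified case it gives $(\tilde{\mathcal{C}\ell}_{K}:N_{L/K}\tilde{\mathcal{C}\ell}_{L})=[L^{c}:K^{c}]/|\mathrm{Coker}\,\phi|$, which can equal $[L:K]$ only if $L\cap K^{c}=K$; your identification of this index with $[L:K]$ smuggles in precisely that missing hypothesis.

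Setting this aside, your route genuinely differs from the paper's: the paper majorizes $|\tilde{\mathcal{C}\ell}_{L}^{G}|$ by the ambiguous class formula combined with $|\mathrm{Coker}\,\phi|\leq |\mathrm{H}^{1}(G,\tilde{D\ell}_{L})|$ and Jaulent's description of that $\mathrm{H}^{1}$, whereas you obtain the exact equality $|\tilde{\mathcal{C}\ell}_{L}^{G}|=|N_{L/K}\tilde{\mathcal{C}\ell}_{L}|$ from the second particular case (this counting of invariants against coinvariants is correct, granting Gross for finiteness) and then need only the norm index. With the corrected index your count gives $|\mathrm{Ker}\,\tilde{j}|\geq [L^{c}:K^{c}]$, a clean logarithmic analogue of Hilbert's Theorem 94 --- but it proves the corollary only under the supplementary hypothesis $L\not\subseteq K^{c}$, i.e. $[L^{c}:K^{c}]>1$. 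That hypothesis cannot be dispensed with by any proof, since the statement is false for cyclotomic layers; and note that the paper's own final bound, $|\mathrm{Ker}\,\tilde{j}|\geq [L^{c}:K^{c}]/|\mathrm{H}^{1}(G,\tilde{D\ell}_{L})|$, is itself vacuous, because for a logarithmically unramified extension the displayed isomorphism (all $\tilde{e}_{\mathfrak{p}}=1$, and some $\deg_{K}(\mathfrak{p})$ generates the ideal $\deg_{K}D\ell_{K}$) collapses to $\mathrm{H}^{1}(G,\tilde{D\ell}_{L})\simeq \mathbb{Z}_{\ell}/[L^{c}:K^{c}]\mathbb{Z}_{\ell}$, of order exactly $[L^{c}:K^{c}]$. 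So your strategy, corrected to use $[L^{c}:K^{c}]$ in place of $[L:K]$, is in fact the natural way to salvage the result, at the price of adding the hypothesis $L\not\subseteq K^{c}$ to the statement.
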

\smallskip

\begin{proof}
Soit $\tilde{j}$ le  morphisme d'extension de $ \tilde{\mathcal{C}\ell}_{K}$ dans $ \tilde{\mathcal{C}\ell}_{L}$

\noindent Comme l'extension est supposée cyclique, nous avons $\tilde{j}( \tilde{\mathcal{C}\ell}_{K}) \subseteq \tilde{\mathcal{C}\ell}_{L}^{G}$.

\noindent D'après \cite{Ja2} , nous avons l'inégalité suivante sur sur le groupe des classes logarithmique ambiges pour une $\ell$-extension cyclique :
$$ |\tilde{\mathcal{C}\ell}_{L}^{G}| \leq |\tilde{\mathcal{C}\ell}_{K}| \frac { \prod_{\mathfrak{p} \in P\ell_{K}^{\infty}} 
d_{\mathfrak{p}}(L/K) \prod_{\mathfrak{p} \in P\ell_{K}^{0}} \tilde{e}_{\mathfrak{p}}(L/K) }
{[L^{c}:K^{c}] (\tilde{\mathcal{E}}_{K} :\tilde{\mathcal{E}}_{K} \cap N_{L/K} \mathcal{R}_{L} )}                |\textrm{H}^{1}(G, \tilde{D\ell}_{L})|.$$

\noindent Or ici $L/K$ est supposée logarithmiquement non ramifiée, compte-tenu de la sous-section précédente,
nous obtenons l'inégalité simplifiée :
$$ |\tilde{j}(\tilde{\mathcal{C}\ell}_{K})| \leq |\tilde{\mathcal{C}\ell}_{L}^{G}| \leq \frac{|\tilde{\mathcal{C}\ell}_{K}|} {[L^{c}:K^{c}]}  |\textrm{H}^{1}(G, \tilde{D\ell}_{L})|.$$

\noindent Compte tenu de l'isomorphisme $\frac{\tilde{\mathcal{C}\ell}_{K} } {\textrm{Ker}\tilde{j}} \simeq \tilde{j}(\tilde{\mathcal{C}\ell}_{K})$, nous en déduisons :
$$ |\textrm{Ker}\tilde{j}| \geq \frac{[L^{c}:K^{c}]}{|\textrm{H}^{1}(G, \tilde{D\ell}_{L})|} .$$

\noindent L'isomorphisme établi par Jaulent \cite{Ja2}
$$ \textrm{H}^{1}(G, \tilde{D\ell}_{L}) \simeq \mathbb{Z}_{\ell}/ [L^{c}:K^{c}] \mathbb{Z}_{\ell} + \sum_{\mathfrak{p}}                                                 \frac{[L^{c}:K^{c}]}{\tilde{e}_{\mathfrak{p}} (L/K) }    \frac{ \textrm{deg}_{K}(\mathfrak{p}) } {\textrm{deg}_{K} D\ell_{K}}  \mathbb{Z}_{\ell}.$$

\noindent nous conduit à $$ |\textrm{Ker}\tilde{j}| > 1.$$

\end{proof}

\bigskip


\begin{thebibliography}{cc}

\bibitem[Bri] {Bri} {\sc  C.Brighi}, \emph{ Capitulation des classes logarithmiques et étude de certaines tours de corps de nombres},
thèse, Publ. Math. Fac. Sci. Metz, Théor. Nombres (2007), 1--67.

\bibitem[Gr] {Gr} {\sc G.Gras}, \emph{Class field theory from theory to practice}, Springer-Verlag, (2003)

\bibitem [Ja1] {Ja1}  {\sc J.-F.  Jaulent},   \emph{Théorie $\ell$-adique du corps des classes}, J. Théor. Nombres Bordeaux, \textbf{10}, fasc.2 (1998), 355--397.

\bibitem[Ja2] {Ja2}  {\sc J.-F Jaulent},  \emph{Classes logarithmiques d'un corps de nombres},  J. Théor. Nombres Bordeaux, \textbf{6} (1994), 301--325. 

\bibitem[Re1] {Re1}  {\sc S.Reglade},  \emph{A formal approach 'à la Neukirch' of $\ell$-adic class field theory}, Soumis.

\bibitem[Re2] {Re2}  {\sc S.Reglade},  \emph{Frobenius et non-ramification logarithmique}, Soumis.

\bibitem[Re3] {Re3} {\sc  S.Reglade}, \emph{ Différentes approches de la théorie $\ell$-adique du corps des classes},
thèse, Théor. Nombres (2014), 1--91


\bibitem[So1] {So1} {\sc F.Soriano}, \emph{Cyclicité et trivialité du groupe des classes logarithmiques}, thèse,
Université de Bordeaux, Théore.Nombres(1995), 1-69.

\bibitem[So2] {So2} {\sc F.Soriano}, \emph{Classes logarithmiques  ambiges de corps quadratiques}, Acta Arithmetica LXXVIII.3,
(1997), 201-219.


\end{thebibliography}
\end{document}